\newcommand{\ul}{\underline}
\newcommand{\ol}{\overline}
\newcommand{\wt}{\widetilde}
\newcommand{\e}{\epsilon}
\newcommand{\C}{\mathbb{C}}
\newcommand{\G}{\mathbb{G}}
\renewcommand{\P}{\mathbb{P}}
\newcommand{\Q}{\mathbb{Q}}
\newcommand{\R}{\mathbb{R}}
\newcommand{\Z}{\mathbb{Z}}
\newcommand{\sM}{\mathscr{M}}
\newcommand{\sO}{\mathscr{O}}
\newcommand{\sS}{\mathscr{S}}
\newcommand{\sX}{\mathscr{X}}
\newcommand{\Aut}{\operatorname{Aut}}
\newcommand{\Tor}{{\operatorname{tors}}}
\renewcommand{\Im}{\operatorname{Im}}
\newcommand{\ch}{\operatorname{char}}
\newcommand{\id}{{\operatorname{id}}}
\newcommand{\cube}{\operatorname{\square}}
\newcommand{\is}{{\operatorname{is}}}
\def\H{\mathbb H}
\def\M#1#2#3#4{\begin{pmatrix}#1&#2\\#3&#4\end{pmatrix}}
\def\gen#1{\langle #1\rangle}
\renewcommand{\div}{\operatorname{div}}
\newcommand{\Div}{\operatorname{Div}}
\newcommand{\Pic}{\operatorname{Pic}}
\newcommand{\Spec}{\operatorname{Spec}}
\newcommand{\CH}{{\operatorname{CH}}}
\newcommand{\SL}{{\operatorname{SL}}}
\newcommand{\ord}{{\operatorname{ord}}}
\newcommand{\lc}{{\operatorname{lc}}}
\newcommand{\LC}{{\operatorname{LC}}}
\newcommand{\ram}{{\operatorname{ram}}}
\newcommand{\WP}{{\operatorname{Weil}}}
\def\SM#1#2#3#4{\left(\begin{smallmatrix}#1&#2\\#3&#4\end{smallmatrix}\right)}
\theoremstyle{plain}
\newtheorem{theorem}{Theorem}[section]
\newtheorem{proposition}[theorem]{Proposition}
\newtheorem{lemma}[theorem]{Lemma}
\newtheorem{corollary}[theorem]{Corollary}
\theoremstyle{definition}
\newtheorem{definition}[theorem]{Definition}
\newtheorem{example}[theorem]{Example}
\newtheorem{remark}[theorem]{Remark}
\numberwithin{equation}{section}
\begin{document}

\title[The intrinsic subgroup of an elliptic curve]
{The intrinsic subgroup of an elliptic curve and Mazur's torsion theorem}



\date{\today}

\author{Takao Yamazaki}
\address{Takao Yamazaki, Department of Mathematics,
Chuo University, 1-13-27 Kasuga,
Bunkyo-ku, Tokyo 112-8551, Japan}
\email{ytakao@math.chuo-u.ac.jp}

\author{Yifan Yang}
\address{Yifan Yang, Department of Mathematics, National Taiwan University and National Center for Theoretical Science, Taipei 10617, Taiwan}
\email{\textnormal{yangyifan@ntu.edu.tw}}

\author{Hwajong Yoo}
\address{Hwajong Yoo, College of Liberal Studies and Research Institute of Mathematics, Seoul National University, Seoul 08826, South Korea}
\email{\textnormal{hwajong@snu.ac.kr}}                                                                  
\author{Myungjun Yu}
\address{Myungjun Yu, Department of Mathematics, Yonsei University, Seoul 03722, South Korea}
\email{\textnormal{mjyu@yonsei.ac.kr}}      

\begin{abstract}
We define and study a biadditive symmetric 
(not necessarily perfect) pairing on 
the torsion part $\mathrm{Pic}(X)_{\mathrm{tors}}$ of the Picard group of 
a smooth projective curve $X$ over a field $k$
with values in $k^\times \otimes \mathbb{Q}/\mathbb{Z}$.
We call its kernel the \textit{intrinsic subgroup} of $X$.
It turns out that
some information on the reduction type of $X$
can be read off from the intrinsic subgroup.
Mazur's torsion theorem says that 
there are exactly 15 isomorphism classes of abelian groups
that appear as the rational torsion points of 
an elliptic curve $X$ over $\mathbb{Q}$
(identified with $\mathrm{Pic}(X)_{\mathrm{Tor}}$).
We refine this result by determining 
which subgroups of those 15 groups appear as the intrinsic subgroups.
\end{abstract}

\keywords{Picard group of a curve, 
torsion rational points of an elliptic curve.}
\subjclass[2010]{11G05 (11G18, 14G25, 14G35)}   
\maketitle

\section{Introduction}
Let $X$ be a geometrically irreducible smooth projective curve
over a field $k$. 
In \S \ref{sect:pairing},
we shall construct a biadditive symmetric pairing
\begin{equation}\label{eq:pairing-intro}
\langle \cdot , \cdot \rangle :
\Pic(X)_\Tor \times \Pic(X)_\Tor \to k^\times \otimes \Q/\Z,
\end{equation}
where $\Pic(X)_\Tor$ is the 
torsion part of the Picard group $\Pic(X)$ of $X$.
We then define the \emph{intrinsic subgroup}
of $\Pic(X)_\Tor$ by
\[
\Pic(X)_\Tor^\is 
:= \{ a \in \Pic(X)_\Tor \mid \langle a, b \rangle = 0
\text{ for all } b \in \Pic(X)_\Tor \}.
\]

We remark that \eqref{eq:pairing-intro} is not entirely new.
Indeed, its finite coefficient analogue 
(with values in $k^\times/(k^\times)^m$)  
is considered by Frey and R\"uck in \cite{FreyRuck}
(see Remark \ref{rem:FreyRuck}).
The first and second authors encountered with
a disguised version of \eqref{eq:pairing-intro} in \cite{YY}
(see Remark \ref{sect:genJac}).
Obviously, \eqref{eq:pairing-intro} is trivial if $k^\times \otimes \Q/\Z=0$,
e.g. when $k$ is finite, algebraically closed, or $k=\R$.
On the other hand,
$\Pic(X)_\Tor^\is$ appears to be a non-trivial new invariant
if $k$ is a global field or a $p$-adic field.

We will mainly consider the case where $X=E$ is an elliptic curve
so that we may identify $E(k)_\Tor= \Pic(E)_\Tor$
by $P \mapsto [P-O]$, where $O \in E(k)$ is the identity element.
Thus we write 
$\gen{P,Q}:=\gen{[P-O], [Q-O]}$ for $P, Q \in E(k)_\Tor$
and 
$E(k)_\Tor^\is:=\Pic(E)_\Tor^\is$.
Our main result is a classification of possible structures of
$(E(\Q)_\Tor,E(\Q)_\Tor^\is)$ for an elliptic curve $E$ over $\Q$.
A celebrated theorem of Mazur \cite[Theorem (8)]{Mazur} states
that $E(\Q)_\Tor$ is either cyclic of order
$1, \ldots,10$, or $12$, or is isomorphic to $(\Z/2N\Z) \times (\Z/2\Z)$,
$N=1, 2,3,4$,
and all cases are realized by infinitely many (mutually non-isomorphic) elliptic curves.
The classification of
$E(\Q)_\Tor^\is$ is given in the following theorem,
whose proof will occupy \S \ref{sect:ref-mazur}--\ref{sect:ref-mazur2}.

\begin{theorem} \label{theorem: intrinsic torsion}
  Let $E$ be an elliptic curve over $\Q$.
Then there is a pair $(A, B)$ of an abelian group $A$
and its subgroup $B$ in the following table,
together with 
an isomorphism $\alpha : E(\Q)_\Tor \cong A$, 
such that $\alpha(E(\Q)_\Tor^\is)=B$.
  $$ \extrarowheight3pt
  \begin{array}{ll} \hline\hline
    A & \text{order of}\ B \\ \hline
    0 & 1 \\
    \Z/2\Z & 1,2\\
    \Z/3\Z & 1,3\\
    \Z/4\Z & 1,2,4 \\
    \Z/5\Z & 1,5 \\
    \Z/6\Z & 1,2,3 \\
    \Z/7\Z & 1 \\
    \Z/8\Z & 1,2 \\
    \Z/9\Z & 1 \\
    \Z/10\Z & 1\\
    \Z/12\Z & 1 \\
    \Z/2\Z\times\Z/2\Z & 1,2\\
    \Z/4\Z\times\Z/2\Z & 1,2 \ \\
    \Z/6\Z\times\Z/2\Z & 1 \\
    \Z/8\Z\times\Z/2\Z & 1 \\ \hline\hline
  \end{array}
  $$
(For $A=\Z/4\Z \times \Z/2\Z$,
there are two $\Aut(A)$-conjugacy classes of order two subgroups,
both of which can be taken as $B$.)
  Moreover, for each possible pair $(A, B)$ as above,
there are infinitely many (mutually non-isomorphic) elliptic curves $E$ over $\Q$ 
for which there is an isomorphism $\alpha : E(\Q)_\Tor \cong A$
such that $\alpha(E(\Q)_\Tor^\is)=B$.
\end{theorem}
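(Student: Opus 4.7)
The plan is to exploit the natural isomorphism $\Q^\times \otimes \Q/\Z \cong \bigoplus_{p} \Q/\Z$, where $p$ ranges over the rational primes (the $\{\pm 1\}$ component dies because $\Q/\Z$ is divisible), in order to split the intrinsic pairing into local components $\langle \cdot, \cdot \rangle_p$. Granting from the earlier sections of the paper that $\langle\cdot,\cdot\rangle_p$ vanishes at primes of good reduction and admits an explicit description at a bad prime in terms of the Tate uniformization (multiplicative case) or of the component group of the Néron model (additive case), the subgroup $E(\Q)_\Tor^\is$ becomes a prime-by-prime invariant extracted from the reduction data of a global minimal model.

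The main computational step, carried out one entry of Mazur's table at a time, is to combine a parametrization of elliptic curves $E/\Q$ with $E(\Q)_\Tor \supseteq A$ by the modular curve $X_1(N)$, concretely via Kubert--Tate normal forms that exhibit generators of $A$ as specified rational points, with the explicit formula for $\langle P_i - O, P_j - O \rangle_p$ on generators $P_i, P_j$. The minimal discriminant and Kodaira types at bad primes can then be read off the parameter, and the local pairing is computed in each case. For groups of small prime order this reduces to a single application of the Tate curve formalism; for composite orders or products, the interaction of several bad primes allows intermediate subgroups such as the proper nontrivial subgroups of $\Z/4\Z$, $\Z/6\Z$, $\Z/8\Z$ (and the two non-conjugate order-$2$ subgroups of $\Z/4\Z\times\Z/2\Z$) to be realized as the kernel.

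For the realization assertion, once a single example $E_0/\Q$ with $(E_0(\Q)_\Tor, E_0(\Q)_\Tor^\is)\cong(A,B)$ is produced for each admissible pair, infinitely many non-isomorphic such curves are generated either by moving along the one-parameter family over the appropriate open subscheme of $X_1(N)$, or, when $A$ has exponent two, by quadratic twisting through infinitely many squarefree integers supported outside the primes of bad reduction of $E_0$ (so that the twist alters local data only at primes where the pairing was already trivial). A local computation at each prime of bad reduction verifies that the pair $(A,B)$ is preserved on an infinite subfamily.

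The hardest part, I expect, is the uniform non-degeneracy assertion in the six cases $A \in \{\Z/7\Z,\Z/9\Z,\Z/10\Z,\Z/12\Z,\Z/6\Z\times\Z/2\Z,\Z/8\Z\times\Z/2\Z\}$, where one must show $E(\Q)_\Tor^\is = 0$ for \emph{every} elliptic curve in the relevant family. This requires producing, for each such curve, at least one prime of bad reduction at which the local pairing restricted to $A$ is already non-degenerate; equivalently, one must forbid all curves whose $N$-torsion reduces too tamely at every bad prime simultaneously. The argument relies on the geometry of the cusps of $X_1(N)$, each of which forces a specific degeneration at a particular prime, and amounts to verifying that the combinatorial possibilities for reduction types across the Néron models of curves in the family do not allow the local pairings to vanish everywhere at once.
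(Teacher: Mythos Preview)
Your local-to-global framework is sound in principle---the isomorphism $\Q^\times\otimes\Q/\Z\cong\bigoplus_p\Q/\Z$ does reduce the problem to prime-by-prime valuation conditions, and the good-reduction vanishing and the Tate-curve formula you invoke are in fact proved in the paper (in \S\ref{sect:high-dim}, \emph{after} the main theorem, not before it). But the paper's proof does not proceed this way. Instead it computes the pairing \emph{globally}: Proposition~\ref{proposition: pairing 1} gives $-\langle P,P\rangle=f_N(t)\otimes(1/N)$ for an explicit polynomial $f_N$ in the Hauptmodul $t$ of $X_1(N)$, obtained via Jacobi theta functions and generalized Dedekind eta functions, and Proposition~\ref{proposition: pairing 2} does likewise for the non-cyclic cases. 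The condition $\langle P,(N/M)P\rangle=0$ then becomes $f_N(t)\in\pm(\Q^\times)^M$, i.e.\ a rational point on the affine curve $\pm s^M=f_N(t)$, whose smooth compactification $X_1(N)_M^\pm$ is shown (Theorem~\ref{proposition: pairing 1-mod}) to be a model or twist of the classical modular curve $X_1(MN,N)$. Non-existence is then settled by determining the rational points on these curves: sometimes via the Mazur--Kenku classification of $Y_0(MN)(\Q)$, sometimes by exhibiting an explicit isomorphism with a rank-zero elliptic curve (see Proposition~\ref{prop:rat-pt-Y1NM}). Existence uses the Hilbert-irreducibility/thin-set estimate (Proposition~\ref{fact}) to guarantee infinitely many $t$ avoiding all the relevant covers.

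The genuine gap in your proposal is the non-existence step, which you describe as ``verifying that the combinatorial possibilities for reduction types \ldots\ do not allow the local pairings to vanish everywhere at once.'' This is not a combinatorial question but a Diophantine one. In your language, vanishing of all local pairings for $A=\Z/7\Z$ means $v_p(f_7(t))\equiv0\pmod 7$ for every prime $p$; writing $t=a/c$ in lowest terms and $f_7(t)=t(1-t)^4$, this forces $a$, $c$, $c-a$ each to be $\pm$ a seventh power, i.e.\ a nontrivial solution to $\pm\alpha^7\pm\beta^7=\pm\gamma^7$. For $(N,M)=(9,3)$ the analogous reduction lands on the elliptic curve $y^2+y=x^3$, and for $(10,2)$, $(12,2)$ on other rank-zero elliptic curves---none of this is ``combinatorics of cusps.'' Moreover your appeal to a description of the local pairing at primes of \emph{additive} reduction via the component group is not supplied by the paper and would need independent justification (the paper only treats good and split multiplicative reduction in \S\ref{sect:high-dim}). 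In short, once your local-global outline is made rigorous it collapses back into exactly the rational-points problem the paper solves via the auxiliary modular curves $X_1(N)_M^\pm$ and $X_1^0(N,2)_{\ul M}^\pm$; the ``combinatorial'' shortcut you anticipate does not exist.
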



We briefly explain the outline of the proof.
Our approach is completely explicit. 
As usual, we let $X_1(N)$ denote the modular curve associated to the
congruence subgroup $\Gamma_1(N)$. It possesses a model over $\Q$ on
which the cusp $0$ is $\Q$-rational. For a subfield $k$ of $\C$, 
the non-cuspidal points in $X_1(N)(k)$ parameterize isomorphism
classes of pairs $(E, P)$ of an elliptic curve $E$ over $k$ 
and a $k$-rational point $P$ of order $N$. 
More precisely, if the coordinates of a point 
$\tau\in\H$ on this model of $X_1(N)$ is $k$-rational, 
then the isomorphism class of $(\C/(\Z\tau+\Z),1/N)$ contains a pair $(E,P)$
of an elliptic curve $E$ over $k$ and a $k$-rational $N$-torsion point $P$.
A famous theorem of Mazur \cite[Theorem (7)]{Mazur} says that 
$X_1(N)(\Q)$ has a non-cuspidal point precisely
when $N=1,\ldots,10$, or $N=12$. 

For a pair $(N, M)$ of $N \in \{4, \dots, 10, 12\}$ and a positive divisor $M$ of $N$,
we will construct a smooth projective curve $X_1 (N)^\pm_M$ over $\Q$ equipped with
a finite morphism $X_1(N)_M^\pm \to X_1(N)$
that enjoys the following property:
when $t \in X_1(N)(\Q)$ corresponds to a pair $(E, P)$ as above (with $k=\Q$),
one has 
\[
\langle P, (N/M)P \rangle = 0 \Leftrightarrow
t \in \Im(X_1(N)^\pm_M(\Q) \to X_1(N)(\Q)).
\]
It is worth  mentioning that
$X_1(N)^+_M$ is isomorphic to the modular curve
$X_1(MN, N)$ associated to $\Gamma_1(MN, N):=\Gamma_0(MN) \cap \Gamma_1(N)$
(Theorem \ref{proposition: pairing 1-mod})
and $X_1(N)^-_M$ is its twist.

Similarly, for a positive even
integer $N$, we let $X_1^0(N,2)$ denote the modular curve associated
to the congruence subgroup
\begin{equation}\label{eq:G10-N2}
\Gamma_1^0(N,2):=\Gamma_1(N)\cap\Gamma^0(2)
=\left\{\M abcd\in\SL(2,\Z):a,d\equiv 1\bmod N,~~2|b,~N|c\right\}.
\end{equation}
The modular curve $X_1^0(N,2)$ parameterizes isomorphism classes of
triples $(E, P, Q)$ of an elliptic curve $E$
and $k$-rational points $P, Q$ 
such that 
$P$ and $Q$ are of order $N$ and $2$ respectively,
and $Q\notin\gen P$.
For $N \in \{4, 6, 8 \}$
and $\ul{M}=(M_1, M_2, M_3)$ with $M_1|N, M_2, M_3 \in \{ 1, 2\}$,
we will construct a smooth projective curve  
$X_1^0(N, 2)_{\ul{M}}^\pm$ over $\Q$
equipped with a finite morphism to $X_1^0(N, 2)$,
by which one can interpret the conditions
\[ \langle P, \frac{N}{M_1}P \rangle 
= \langle Q, \frac{2}{M_2}Q \rangle 
= \langle P, \frac{2}{M_3}Q \rangle = 0
\]
for a triple $(E, P, Q)$ as above corresponding to a point of $X_1^0(N, 2)(\Q)$.
Therefore Theorem \ref{theorem: intrinsic torsion} is reduced to a study of the $\Q$-rational points of 
$X_1(N)_M^\pm$ and $X_1^0(N, 2)_{\ul{M}}^\pm$.
As our construction of these curves are explicit,
this can be done by a (more or less) direct computation.

In the last section \S \ref{sect:high-dim},
we generalize the pairing \eqref{eq:pairing-intro}
and the intrinsic subgroup 
to higher dimensional varieties.
This new construction enables us to prove that,
if $X$ has good reduction with respect to a discrete valuation of $k$,
there is a strong restriction on the values of the pairing 
(see Proposition \ref{prop:two-def-same}).
Finally, as a sample for the case of bad reduction,
we compute the intrinsic subgroup of 
Tate elliptic curves (see Proposition \ref{prop:TateCv}).
As an application,
we show that the intrinsic subgroup of an elliptic curve over a number field
imposes some constraint on its reduction type
(see Corollary \ref{cor:kodairatype}).

\subsection*{Acknowledgement}
It is Kenneth Ribet who formulated \eqref{eq:pairing-intro} as a pairing 
and asked if it is symmetric.
The authors express deep gratitude to him for asking this question,
which was a starting point of our work.

The first author (T. Y.) is supported by JSPS KAKENHI Grant (25K06961). 
The second author (Y. Y.) is supported by 
Grant 113-2115-M-002-003-MY3 of the National Science and Technology
  Council of the Republic of China (Taiwan). 
The third author (H. Y.) is supported by National Research Foundation of Korea(NRF) grant 
funded by the Korea government(MSIT) (No. RS-2023-00239918  and No. 2020R1A5A1016126).
The fourth author (M. Y.) is supported by the National Research Foundation of Korea (NRF) grant 
funded by the
Korea government (MSIT) (RS-2025-23525445).

\subsection*{Notation and convention}
For an abelian group $A$, 
we write $A[n]$ and $A/n$ for the kernel and cokernel
of $A \to A, \ a \mapsto na$ for each $n \in \Z$,
and put $A_\Tor := \cup_{n > 0} A[n]$.
For a field $k$,
we set $\mu_n(k):=(k^\times)[n]$ and $\mu(k):=(k^\times)_\Tor$.

The identity element of an elliptic curve is denoted by $O$.

\section{Pairing on the Picard group and the intrinsic subgroup}
\label{sect:pairing}
Throughout this section, 
we let $X$ be a geometrically irreducible smooth projective curve
over a field $k$. 
The goal of this section is to construct a
biadditive pairing
\begin{equation}\label{eq:pairing}
\langle \cdot , \cdot \rangle :
\Pic(X)_\Tor \times \Pic^0(X) \to k^\times \otimes \Q/\Z
\end{equation}
which restricts to a symmetric pairing \eqref{eq:pairing-intro}.
Here  $\Pic^0(X) := \ker(\deg : \Pic(X) \to \Z)$
(thus $\Pic(X)_\Tor \subset \Pic^0(X)$).
This will be generalized to higher dimensional varieties 
in Section \ref{sect:high-dim} below,
but our construction in this section
is more elementary and sufficient for most of our discussion.

\subsection{Construction of the pairing}

For a non-zero rational function $f \in k(X)^\times$,
a closed point $Q \in X$,
and $n \in \Z_{>0}$ such that $n|\ord_Q(f)$,
we define the \emph{leading coefficient} by
\[
\lc_Q(f, n):=
\left(\frac{f}{\pi^{\ord_{Q}(f)}} (Q) \right) \otimes \frac{1}{n}
\in k(Q)^\times \otimes \Q/\Z,
\]
where $\pi \in k(X)^\times$ is a uniformizer at $Q$.
This is independent of the choice of $\pi$:
replacing $\pi$ by $u \pi$
for some $u \in k(X)^\times$ with $\ord_Q(u)=0$
changes $\lc_Q(f, n)$ by
$u(Q)^{-\ord_Q(f)} \otimes (1/n)$
which vanishes by the assumption $n|\ord_Q(f)$.
Note also that for any $m>0$ we have
\begin{equation}\label{eq:lc-power-m}
\lc_Q(f, n)=\lc_Q(f^m, mn).
\end{equation}

We define a subgroup $\Div_t(X)$
of the group of divisors $\Div(X)$ on $X$ by 
\[
\Div_t(X):=\{ D \in \Div(X) \mid [D] \in \Pic(X)_\Tor \},
\]
where we write $[D] \in \Pic(X)$ for the class of $D \in \Div(X)$.
It contains all rationally trivial divisors,
and all its members have degree zero:
\[ \{ \div(f) \mid f \in k(X)^\times \}
\subset \Div_t(X) \subset \Div^0(X) := \ker(\deg : \Div(X) \to \Z).
\]
Given $D \in \Div_t(X)$ and 
$E = \sum_j e_j Q_j \in \Div^0(X)$,
we define
\begin{equation}\label{eq:pairing1}
\langle D, E\rangle :=
\prod_j 
N_{k(Q_j)/k}(\lc_{Q_j}(f, n))^{e_j} 
\in k^\times \otimes \Q/\Z,
\end{equation}
where
$n \in \Z_{>0}$ is such that $n[D]=0$,
$f \in k(X)^\times$ is such that $\div(f)=nD$
(so that $n | \ord_P(f)$ for any $P$),
and $N_{k(Q_j)/k} : k(Q_j)^\times \otimes \Q/\Z \to k^\times \otimes \Q/\Z$
is induced by the norm map attached to the finite extension $k(Q_j)/k$.

\begin{proposition}
For $D \in \Div_t(X)$ and $E \in \Div^0(X)$, 
the element \eqref{eq:pairing1} depends 
only on the classes of $D, E$ in $\Pic(X)$.
Consequently we obtain the induced biadditive pairing
\eqref{eq:pairing} (denoted by the same symbol).
Moreover, its restriction to $\Pic(X)_\Tor \times \Pic(X)_\Tor$
is symmetric.
\end{proposition}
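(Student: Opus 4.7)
The plan is to establish well-definedness of $\langle D, E \rangle$ in three steps (independence of the auxiliary data $(n, f)$; independence of the divisor class of $D$; independence of the divisor class of $E$), and then to deduce symmetry on torsion from Weil reciprocity. Two algebraic facts will carry most of the load: (i) $x \otimes 1 = 0$ in $k^\times \otimes \Q/\Z$ for every $x \in k^\times$, and (ii) Weil reciprocity for the tame symbol,
\[
\prod_P N_{k(P)/k}((f, g)_P) = 1 \text{ in } k^\times, \quad (f, g)_P = (-1)^{\ord_P(f)\ord_P(g)} \frac{u_f(P)^{\ord_P(g)}}{u_g(P)^{\ord_P(f)}},
\]
where $u_f := f/\pi_P^{\ord_P(f)}$ for a choice of uniformizer $\pi_P$ at $P$.

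Independence of $(n, f)$ and of the representative of $[D]$ is essentially formal. Two admissible values of $n$ can be replaced by a common multiple via \eqref{eq:lc-power-m}. Two admissible $f$'s for the same $n$ differ by a constant $c \in k^\times$, whose total contribution to $\langle D, E \rangle$ is $c^{\deg(E)} \otimes 1/n$, which vanishes because $\deg(E) = 0$. Replacing $D$ by $D + \div(h)$ forces $f$ to become $fh^n$, and a direct computation using (i) gives $\lc_Q(fh^n, n) = \lc_Q(f, n)$, since $u_h(Q)^n \otimes 1/n = u_h(Q) \otimes 1 = 0$. Independence of the representative of $[E]$ is the first place Weil reciprocity enters: I would apply (ii) to the pair $(f, g)$ and tensor with $1/n$ to obtain $\langle D, \div(g)\rangle = 0$. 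Since $\ord_P(f) = n \ord_P(D)$, the factor $N(u_g(P))^{-\ord_P(f)} \otimes 1/n$ collapses to $\otimes 1 = 0$, and the sign factor dies by the same mechanism; what survives is precisely $\langle D, \div(g)\rangle$, forced to equal $0$.

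For symmetry on $\Pic(X)_\Tor \times \Pic(X)_\Tor$, I would fix a common $n$ together with $f, f'$ satisfying $\div(f) = nD$ and $\div(f') = nD'$, and apply Weil reciprocity to the pair $(f, f')$, this time tensored with $1/n^2$. Using $\ord_P(f) = n \ord_P(D)$ and $\ord_P(f') = n \ord_P(D')$, the identity $a^n \otimes 1/n^2 = a \otimes 1/n$ collapses the two unit factors into exactly $\langle D, D'\rangle$ and $\langle D', D \rangle^{-1}$ respectively, while the sign factor $(-1)^{n^2 \ord_P(D) \ord_P(D')}$ vanishes by (i). The resulting identity $\langle D, D'\rangle = \langle D', D\rangle$ is the desired symmetry. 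The only real obstacle is careful bookkeeping of integer exponents in the tensor product $k^\times \otimes \Q/\Z$, in particular ensuring that the sign $(-1)^{\ord_P(f)\ord_P(g)}$ always absorbs into a power of $n$ large enough for fact (i) to kill it; no deeper input than Weil reciprocity is required.
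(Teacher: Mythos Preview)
Your argument is correct and rests on the same single deep input as the paper's (Weil reciprocity), but the organization differs in a way worth noting. The paper proceeds: (i) independence of $(n,f)$ as you do; (ii) if $[D]=0$ one simply takes $n=1$, so every leading coefficient is tensored with $1$ and the pairing dies (rather than your direct computation with $fh^n$); (iii) for $D,D'\in\Div_t(X)$ with \emph{disjoint} support, the ratio $\langle D,D'\rangle/\langle D',D\rangle$ is identified with the Weil pairing $\WP_n([D],[D'])\otimes(1/n)$, which vanishes because $\WP_n$ lands in $\mu_n(k)$; (iv) independence of $[E]$ is then \emph{deduced from} symmetry and step (ii), after first moving $D$ by a principal divisor to make the supports disjoint. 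In contrast, you treat independence of $[E]$ and symmetry as two separate direct applications of Weil reciprocity for the tame symbol, tensored with $1/n$ and $1/n^2$ respectively, and your use of the tame symbol at every point (including those in $|D|\cap|D'|$) avoids the moving step entirely. Your route is more uniform and self-contained; the paper's is a bit shorter and has the conceptual payoff of exhibiting the obstruction to symmetry as the classical Weil pairing.
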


\begin{proof}
We first show that \eqref{eq:pairing1} is independent of the choices
of $f$ and $n$.
\begin{itemize}
\item 
Replacing $f$ by $cf$ for some $c \in k^\times$
does not change $\langle D, E\rangle$ 
since
$\prod_j N_{k(Q_j)/k}(c)^{e_j}
=\prod_j c^{[k(Q_j):k] e_j} =c^{\deg(E)}=1$ by $\deg(E)=0$.
\item 
For any $m \in \Z_{>0}$,
replacing $n$ by $mn$ 
does not change $\langle D, E\rangle$ 
since we have $\div(f^m)=mnD$
and \eqref{eq:lc-power-m}.
\end{itemize}
In particular, 
if $D=\div(f)$ for $f \in k(X)^\times$,
we may take $n=1$ to conclude
$\langle D, E\rangle=\langle \div(f), E \rangle=0$.
Thus \eqref{eq:pairing1} factors through
$\langle \cdot , \cdot \rangle :
\Pic(X)_\Tor \times \Div^0(X) \to k^\times \otimes \Q/\Z.$

We now assume that $E$ is also in $\Div_t(X)$
and that $|D| \cap |E|=\emptyset$. 
We claim
\begin{equation}\label{eq:symm}
\langle D, E \rangle = \langle E, D \rangle.
\end{equation}
To show this, 
we suppose $n[D]=n[E]=0$ by replacing $n$ if necessary,
and write $D=\sum_i d_i P_i$
and $nE=\div(g)$ with $g \in k(X)^\times$.
Then we have $\lc_Q(f, n)=f(Q) \otimes (1/n)$ 
and $\lc_P(g, n)=g(P) \otimes (1/n)$ 
for all $P \in |D|$ and $Q \in |E|$,
and hence
\[
\langle D, E\rangle =
\prod_j N_{k(Q_j)/k}(f(Q_j))^{e_j}\otimes \frac{1}{n},
\qquad
\langle E, D\rangle =
\prod_i N_{k(P_i)/k}(g(P_i))^{d_i} \otimes \frac{1}{n}.
\]
Recall that the Weil pairing $\WP_n([D], [E])$
is given by
\[
\WP_n([D], [E])
=\frac{\prod_j N_{k(Q_j)/k}(f(Q_j))^{e_j}}{\prod_i N_{k(P_i)/k}(g(P_i))^{d_i}}
\]
which takes its value in  $\mu_n(k)$.
(This is a consequence of the Weil reciprocity
and valid in any characteristic,
although $\WP_n$ fails to be perfect if $p=\ch(k)>0$ and $p|n$.)
Hence it vanishes after taking $
- \otimes (1/n)$ in $k^\times \otimes \Q/\Z$.
We have shown \eqref{eq:symm}.

Finally, suppose that 
$E=\div(g)$ for some $g \in k(X)^\times$.
We can always find $h \in k(X)^\times$
such that $|D + \div(h)| \cap |\div(g)|=\emptyset$.
From what we have shown it follows that
\[ 
\langle D, \div(g) \rangle
= \langle D + \div(h), \div(g) \rangle
= \langle \div(g), D + \div(h) \rangle
= 0.
\]
This completes the proof of the proposition.
\end{proof}

\begin{remark}\label{rem:FreyRuck}
As is seen from the proof,
there is an analogous biadditive pairing
\[
\langle \cdot, \cdot \rangle_n : 
\Pic(X)[n] \times \Pic^0(X)/n \to k^\times/(k^\times)^n
\]
for each $n>0$,
which satisfies
$\langle d, e \rangle_n 
=
\WP_n(d, e)
\langle e, d \rangle_n
$for any $d, e \in \Pic(X)[n]$.
This pairing has been constructed by Frey and R\"uck in \cite{FreyRuck}
when $k$ is a finite field
and shown to be perfect if $|k| \equiv 1 \bmod n$.
(This is called the 
Lichtenbaum-Tate pairing in some literature,
see e.g. \cite[XI.9]{sil2}.)
\end{remark}


\begin{remark}[Relation with the generalized Jacobian]\label{sect:genJac}
Let $P_0, \dots, P_r \in X(k)$ be distinct $k$-rational points on $X$
such that $P_i-P_0 \in \Div_t(X)$ for any $i$,
and put $D:=P_0+\cdots+P_r$.
Let $J$ (resp. $\wt{J}$) be the Jacobian of $X$
(resp. generalized Jacobian of $X$ with modulus $D$).
We have an exact sequence
\[ 0 \to T \to \wt{J} \to J \to 0, \]
where $T \cong \G_m^r$ is the (split) torus 
with character group $\Z[D]^0 := \oplus_{i=1}^r (P_i-P_0)\Z$.
It induces a diagram with exact row:
\begin{equation}
\label{eq:gen-jac}
\vcenter{
\xymatrix{
0 \ar[r] & 
T(k)_\Tor \ar[r] &
\wt{J}(k)_\Tor \ar[r] &
J(k)_\Tor \ar[r]  \ar[rd]_-\Phi &
H^1(k, T(\ol{k})_\Tor) \ar[d]^{\cong}
\\
& & &
\Z[D]^0 \ar[u]^{[ \cdot ]} &
(k^\times \otimes \Q/\Z)^r,
}}
\end{equation}
where 
$\Phi$ is defined by the commutativity.
It is shown in \cite[Lemma 2.3.1]{YY} that
$\Phi$ can be described in terms of the pairing \eqref{eq:lc-power-m} as
\begin{equation*}
\Phi([P_i-P_0]) = (\langle P_i-P_0, P_j-P_0 \rangle)_{j=1}^r.
\end{equation*}
\end{remark}


\subsection{The intrinsic subgroup}

\begin{definition}
We define the \emph{intrinsic subgroup}
of $\Pic(X)_\Tor$ by
\begin{align*}
&\Pic(X)_\Tor^\is 
:= \{ a \in \Pic(X)_\Tor \mid \langle a, b \rangle = 0
\text{ for all } b \in \Pic(X)_\Tor \}.
\end{align*}
\end{definition}

If $X=E$ is an elliptic curve, we identify 
$E(k)=\Pic^0(X)$ and put $E(k)_\Tor^\is:=\Pic(X)_\Tor^\is$.

Here we exhibit a few examples in the case of modular curves.

\begin{example}
Let $p$ be a prime and consider the modular curve $X=X_0(p)$ over $k=\Q$.
Let $P_0$ and $P_1$ be the $0$ and $\infty$ cusps, respectively.
Mazur \cite[Theorem 1]{Mazur} 
shows that
$\Pic(X)_\Tor$ is a cyclic group of order $n:=(p-1)/(p-1, 12)$
generated by $[P_1-P_0]$.
It follows from \cite[Proposition 5.4.1]{YY} that
\[ \langle P_1 - P_0, P_1-P_0 \rangle 
= p \otimes \frac{12}{p-1} \in \Q^\times \otimes \Q/\Z. 
\]
This can be computed by taking
$f=(\eta(p\tau)/\eta(\tau))^{24/(p-1, 12)}$
so that $\div(f)=n(P_0-P_1)$.
In particular, we have $\Pic(X)_\Tor^\is=\{ 0 \}$.
\end{example}

\begin{example}
Let $p, q$ be distinct primes such that $p \equiv q \equiv 1 \bmod 12$,
and put
\[
a=\frac{(p-1)(q+1)}{24}, \quad
b=\frac{(p+1)(q-1)}{24}, \quad
c=\frac{(p-1)(q-1)}{24}.
\]
Let $X=X_0(pq)$ be the modular curve over $k=\Q$
and let $P_0, P_1, P_2, P_3$ be the cusps
of level $pq, 1, p, q$, respectively.
Ohta \cite{Ohta} shows $\Pic(X)_\Tor$ is generated by
$P_i-P_0 \ (i=1, 2, 3)$ up to $2$-primary torsion.
It follows from \cite[Section 7]{YY}
that the values of $\langle P_i-P_0, P_j-P_0 \rangle$
are given by $-\otimes (1/2ab)$ of the following table.
\[
\begin{array}{|c|ccc|} \hline
 & P_1-P_0 & P_2-P_0 & P_3-P_0
\\
\hline
P_1-P_0 & p^bq^a & q^a & p^b  \\
P_2-P_0 & q^a & q^a & 1  \\
P_3-P_0 & p^b & 1 & p^b  \\
\hline
\end{array}
\]
In particular,
$\Pic(X)_\Tor^\is$
is a cyclic group of order $c$ generated by 
$[P_0+P_1-P_2-P_3]$,
up to $2$-primary torsion.
\end{example}

\subsection{Functoriality}
Let $Y$ be another geometrically irreducible smooth projective curve over $k$
and $\phi : X \to Y$ a finite morphism over $k$.
We have
the push-forward $\phi_* : \Div(X) \to \Div(Y)$
and 
the pull-back $\phi^* : \Div(Y) \to \Div(X)$
of divisors along $\phi$.
Recall that they are given by
\begin{align*}
&\phi_*(\sum_i d_i P_i) = \sum_i d_i [k(P_i):k(\phi(P_i))] \phi(P_i),
\\
&\phi^*(\sum_j e_j Q_j) = \sum_j e_j 
\sum_{P \in \phi^{-1}(Q_j)} \ram_P(\phi) P,
\end{align*}
where $\ram_P(\phi)$ denotes the ramification index of $\phi$ at $P$.
We denote by
$\langle \cdot, \cdot \rangle_X$ and $\langle \cdot, \cdot \rangle_Y$
the pairing \eqref{eq:pairing} for $X$ and $Y$, respectively.

\begin{lemma}[Projection formula]
For any $D \in \Div_t(X)$ and $E \in \Div^0(Y)$, we have
\begin{equation}\label{eq:proj-f}
\langle \phi_*D, E \rangle_X =
\langle D, \phi^*E \rangle_Y.
\end{equation}
Consequently, we have 
\[
\phi^*(\Pic(Y)_\Tor^\is) \subset \Pic(X)_\Tor^\is,
\qquad
\phi_*(\Pic(X)_\Tor^\is) \subset \Pic(Y)_\Tor^\is.
\]
\end{lemma}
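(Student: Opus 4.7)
The plan is to reduce the projection formula \eqref{eq:proj-f} to a local identity at each closed point $Q$ of $Y$ and then verify that identity by a direct computation with norms in local extensions; the two inclusions for intrinsic subgroups will follow formally.

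By bilinearity of both sides of \eqref{eq:proj-f} in $E$ it suffices to treat the case $E = Q$ for a single closed point $Q \in Y$. Choose $n > 0$ with $n[D] = 0$ and $f \in k(X)^\times$ with $\div_X(f) = nD$; then $n \phi_* D = \phi_*\div_X(f) = \div_Y(g)$ for $g := N_{k(X)/k(Y)}(f)$, so $\phi_* D \in \Div_t(Y)$ and $\lc_Q(g, n)$ is defined. Expanding $\phi^* Q = \sum_{P | Q} \ram_P(\phi) \cdot P$ on the right-hand side of \eqref{eq:proj-f} and using transitivity of the norm $N_{\kappa(P)/k} = N_{\kappa(Q)/k} \circ N_{\kappa(P)/\kappa(Q)}$, the formula reduces to the local claim
\begin{equation*}
\lc_Q(g, n) = \prod_{P | Q} N_{\kappa(P)/\kappa(Q)}(\lc_P(f, n))^{\ram_P(\phi)} \quad \text{in } \kappa(Q)^\times \otimes \Q/\Z.
\end{equation*}

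To prove this local identity, I would pass to the henselization $K$ of $k(Y)$ at $Q$; the base change $k(X) \otimes_{k(Y)} K$ decomposes as $\prod_{P | Q} L_P$ with $L_P$ the henselian local field of $X$ at $P$, and the global norm becomes $g = \prod_P N_{L_P/K}(f)$. Writing $f = u_P \pi_P^{a_P}$ locally at $P$ with $u_P$ a unit and $a_P = \ord_P(f)$ (so $n \mid a_P$), I would invoke two standard facts from local field theory: first, $v_K(N_{L_P/K}(\pi_P)) = [\kappa(P) : \kappa(Q)]$, so $N_{L_P/K}(\pi_P) = \tau_P \pi_Q^{[\kappa(P):\kappa(Q)]}$ for some unit $\tau_P$; and second, for any $\alpha \in \mathcal{O}_{L_P}^\times$ the residue of $N_{L_P/K}(\alpha)$ modulo $\pi_Q$ equals $N_{\kappa(P)/\kappa(Q)}(\bar\alpha)^{\ram_P(\phi)}$, as one sees by factoring $L_P / K$ through its maximal unramified subextension. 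Combining these, the leading coefficient of $g$ at $Q$ becomes $\prod_P N_{\kappa(P)/\kappa(Q)}(\bar u_P)^{\ram_P(\phi)} \cdot \prod_P \bar\tau_P^{a_P}$, and after tensoring with $1/n$ the second factor vanishes (because $a_P/n \in \Z$ and $\bar\tau_P \in \kappa(Q)^\times$), giving precisely the right-hand side of the local claim.

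The consequences for intrinsic subgroups then follow formally. For $b \in \Pic(Y)_\Tor^\is$ and any $c \in \Pic(X)_\Tor$, symmetry of the pairing on torsion together with \eqref{eq:proj-f} yields
\begin{equation*}
\langle \phi^* b, c \rangle_X = \langle c, \phi^* b \rangle_X = \langle \phi_* c, b \rangle_Y = \langle b, \phi_* c \rangle_Y = 0,
\end{equation*}
so $\phi^* b \in \Pic(X)_\Tor^\is$; the inclusion for $\phi_*$ is analogous once one notes that $\phi^*$ preserves torsion classes (from $nd = 0 \Rightarrow n\phi^*d = \phi^*(nd) = 0$). I expect the main technical obstacle to be the $\ram_P(\phi)$-th power in the residue formula for $N_{L_P/K}$ on units: it is easy to overlook, and it is precisely this factor that makes the projection formula compatible with the ramified definition of $\phi^*$.
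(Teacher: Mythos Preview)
Your argument is correct, but the paper takes a shorter route. Instead of proving the local identity at an arbitrary point (possibly in the support of $\phi_*D$), the paper first replaces $E$ by $E+\div(h)$ for a suitable $h\in k(Y)^\times$ so that $|\phi_*D|\cap|E|=\emptyset$ and $|D|\cap|\phi^*E|=\emptyset$. This is legitimate because the pairing has already been shown to depend only on divisor classes. With this disjointness, every leading coefficient $\lc_Q(g,n)$, $\lc_P(f,n)$ is just a function value $g(Q)\otimes\tfrac{1}{n}$, $f(P)\otimes\tfrac{1}{n}$, and the projection formula reduces immediately to the classical identity $(N_{k(X)/k(Y)}f)(Q)=\prod_{P\mid Q}N_{\kappa(P)/\kappa(Q)}(f(P))^{\ram_P(\phi)}$ for $f$ a unit at each $P$ over $Q$. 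No henselizations, no analysis of how the norm acts on uniformizers, and no need to kill an extra $\bar\tau_P^{a_P}$ term.

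Your approach has the virtue of being self-contained at each closed point and of not relying on the freedom to move $E$ within its class; in effect you are re-proving a sharpened local norm formula that holds even when $f$ is not a unit at $P$. The paper's approach trades this for a one-line reduction and a citation of the standard norm formula. Both routes rest on the same local facts about norms in extensions of discrete valuation rings; the paper just arranges matters so that only the easy (unit) case is needed. Your derivation of the two inclusions for intrinsic subgroups matches the paper's.
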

\begin{proof}
We may assume $|\phi_* D| \cap |E|=\emptyset$ and $|D| \cap |\phi^* E|=\emptyset$
by replacing $E$ by $E+\div(h)$ for a suitable $h \in k(Y)^\times$.
Take $n \in \Z_{>0}$ and $f \in k(X)^\times$ such that 
$n[D]=0$ and $\div(f)=nD$.
Since we have $\div(N_{k(X)/k(Y)}(f))=\phi_* \div(f)$,
the left and right hand sides of \eqref{eq:proj-f}  are rewritten as
\begin{align*}
\prod_j (N_{k(X)/k(Y)} f)(Q_j)^{e_j} \otimes \frac{1}{n},
\qquad
\prod_j 
\left(\prod_{P \in \phi^{-1}(Q_j)} f(P)^{\ram_P(\phi)}\right)^{e_j} 
\otimes \frac{1}{n}.
\end{align*}
Now \eqref{eq:proj-f} follows from the formula
$(N_{k(X)/k(Y)} f)(Q_j)
=\prod_{P \in \phi^{-1}(Q_j)} f(P)^{\ram_P(\phi)}$.
The last statement immediately follows from \eqref{eq:proj-f}. 
We are done.
\end{proof}

\begin{remark}
Let $k'/k$ be a field extension,
and $X'$ the base change of $X$ to $k'$.
Then we have
$\phi^* \langle D, E \rangle = \langle \phi^*D, \phi^*E \rangle'$,
where 
$\phi^* : k^\times \otimes \Q/\Z \to {k'}^\times \otimes \Q/\Z$ 
and 
$\phi^* : \Pic(X) \to \Pic(X')$ are the induced maps,
and $\langle \cdot, \cdot \rangle'$ denotes 
\eqref{eq:pairing} for $X'$.
However, it can happen that $\phi^*(\Pic(X)_\Tor^\is) \not\subset \Pic(X')_\Tor^\is$
(when $\phi^*(\Pic(X)_\Tor) \subsetneq \Pic(X')_\Tor$).
\end{remark}

\section{Elliptic curves over $\Q$}\label{sect:ref-mazur}

Let $k$ be a field. Suppose that an elliptic curve
$E:y^2+a_1xy+a_3y=x^3+a_2x^2+a_4x+a_6$ over $k$ has a
$k$-rational point $P$ other than the point $O$ at infinity. By a suitable
change of variables, we may assume that $P=(0,0)$, so the equation of
the elliptic curve is of the form $y^2+a_1xy+a_3y=x^3+a_2x^2+a_4x$.
We can show that $(0,0)$ is a $2$-torsion if and only if $a_3=0$.
Thus, if $(0,0)$ is a $2$-torsion and $\operatorname{char}k\neq 2$,
then we may make a change of variables $(x,y)\mapsto(x,y-a_1x/2)$
and assume that $E$ is of the form
\begin{equation} \label{eq: E2}
E_{2,a,b}:y^2=x(x^2+ax+b), \qquad b(a^2-4b)\neq 0
\end{equation}
for some $a,b \in k$.
If $(0,0)$ is not a $2$-torsion (without assumptions on
$\operatorname{char}k$), then we have $a_3\neq0$. We set
$(x,y)\mapsto(x,y+a_4x/a_3)$ and obtain an equation of the form
$y^2+b_1xy+b_3y=x^3+b_2x^2$. We can show that $(0,0)$ is a $3$-torsion
if and only if $b_2=0$. Thus, if $(0,0)$ is a $3$-torsion, then we may
assume that the equation of the elliptic curve is of the form
\begin{equation} \label{eq: E3}
E_{3,a,b}:y^2+axy+by=x^3, \qquad b(a^3-27b)\neq 0
\end{equation}
for some $a,b \in k$.
If $(0,0)$ is neither a $2$-torsion nor a $3$-torsion, then by setting
$(x,y)\mapsto(u^2x,u^3y)$, where $u=b_3/b_2$, we find that the
equation of the elliptic curve becomes
$$
E:y^2+(1+a)xy+by=x^3+bx^2
$$
for some $a,b\in k$. This is called the Tate normal form (with the
starting rational point $P$) in the literature. By computing explicitly
the coordinates of $m(0,0)$ for $m=1,2,\ldots$, one can 
determine the relation between $a$ and $b$ such that $(0,0)$ is 
of order $N$.
As of now, the most extensive computation was due to
Baaziz \cite{Baaziz}, who determined the relation for $N$ up to $51$. 
(Similar computation, but in a smaller range of $N$, can be found in
several places, such as \cite{Reichert}.)
Such a relation can be taken as a defining equation
for the modular curve $X_1(N)$. When $k$ is a subfield of $\C$, Baaziz
also found expressions of $a$ and $b$ as modular functions in $\tau$,
where $\tau\in\H$ is a point such that
$(\C/(\Z\tau+\Z),1/N)\simeq(E,(0,0))$. The modular functions given in
\cite{Baaziz} were written in terms of the Weierstrass
$\wp$-function. For our purpose, we will give alternative expressions
using generalized Dedekind eta functions, which we recall in the next
subsection.

\begin{remark} \label{remark: uniqueness of Tate}
  Note that the Tate normal form is unique in the sense that if
$$
E:y^2+(1+a)xy+by=x^3+bx^2, \qquad
E':y^2+(1+a')xy+b'y=x^3+b'x^2
$$
are two elliptic curves in the Tate normal form such that
$(E,(0,0))\simeq(E',(0,0))$, then $a=a'$ and $b=b'$ (see Proposition
1.3 of \cite{Baaziz}).
\end{remark}

\subsection{Generalized Dedekind eta functions}
In this subsection, we recall the definition of generalized Dedekind
eta functions and their properties \cite{Yang-Dedekind}.

\begin{definition} \label{definition: generalized Dedekind}
  Let $N>1$ be an integer. For integers $g$ and
  $h$ not simultaneously congruent to $0$ modulo $N$, the generalized
  Dedekind eta function $E_{g,h}^{(N)}$ is defined by
  $$
  E_{g,h}^{(N)}(\tau)=q^{B(g/N)/2}\prod_{m=1}^\infty
  (1-\zeta^hq^{m-1+g/N})(1-\zeta^{-h}q^{m-g/N}),
  $$
  where $q=e^{2\pi i \tau}, \zeta=e^{2\pi i/N}$ and $B(x)=x^2-x+1/6$.
\end{definition}
  
In the next lemma, we describe the properties of generalized Dedekind
eta functions needed in our discussion.

\begin{lemma}[{\cite[Theorem 1]{Yang-Dedekind}}]
  \label{lemma: generalized Dedekind}
  Let $N>1$ be an integer and $g$ and $h$ be integers not
  simultaneously congruent to $0$ modulo $N$. Then we have
  \begin{equation} \label{eq: E translation}
  E_{g+N,h}^{(N)}=E_{-g,-h}^{(N)}
  =-\zeta^{-h}E_{g,h}^{(N)}, \qquad
  E_{g,h+N}^{(N)}=E_{g,h}^{(N)}.
  \end{equation}
  Also, for $\gamma=\SM abcd\in\SL(2,\Z)$ with $c\ge0$, we have
  $$
  E_{g,h}^{(N)}(\tau+b)
  =e^{\pi ibB(g/N)}E_{g,bg+h}^{(N)}(\tau), \qquad
  \text{if }c=0,
  $$
  and
\begin{equation} \label{eq: E translation2}
  E_{g,h}^{(N)}(\gamma\tau)=\epsilon(a,b,c,d)e^{\pi i\delta}
  E_{g',h'}^{(N)}(\tau), \qquad
  \text{if }c> 0,
\end{equation}
  where
  \begin{equation*} 
  \epsilon(a,b,c,d)=\begin{cases}
    e^{\pi i(bd(1-c^2)+c(a+d-3))/6}, &\text{if }c\text{ is odd},\\
    -ie^{\pi i(ac(1-d^2)+d(b-c+3))/6}, &\text{if }d\text{ is odd},
    \end{cases}
  \end{equation*}
  \begin{equation*} 
  \delta=\frac{g^2ab+2ghbc+h^2cd}{N^2}-\frac{gb+h(d-1)}N,
  \end{equation*}
  and
  \begin{equation*} 
  (g'\ h')=(g\ h)\M abcd.
  \end{equation*}
\end{lemma}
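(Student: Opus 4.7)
The statement is attributed to the second author's earlier paper \cite{Yang-Dedekind}, so in practice one cites that reference. Still, here is how I would carry out a proof from scratch.

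The plan is to separate the assertions into the three qualitatively different transformation types and handle each by direct manipulation of the defining product, reducing the nontrivial case to the transformation law for Jacobi theta functions. First I would establish the elementary periodicity and symmetry in the indices $g,h$. Both $E_{g+N,h}^{(N)}=-\zeta^{-h}E_{g,h}^{(N)}$ and $E_{g,h+N}^{(N)}=E_{g,h}^{(N)}$ can be read off the product: shifting $h\mapsto h+N$ is invisible because $\zeta^N=1$, while shifting $g\mapsto g+N$ shifts the exponents in the two factors by $+1$ and $-1$ respectively, and using $B(x+1)-B(x)=2x$ one sees that the prefactor picks up exactly $q^{g/N}(1-\zeta^{-h}q^{-g/N}) = -\zeta^{-h}(1-\zeta^h q^{g/N})$, which is the missing $m=0$ term in the original product. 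The identity $E_{g+N,h}^{(N)}=E_{-g,-h}^{(N)}$ follows similarly by comparing products and using the evenness of $B$ around $1/2$.

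Second, the $c=0$ case amounts to $\tau\mapsto\tau+b$ (with $a=d=\pm1$). Substituting directly multiplies $q^{B(g/N)/2}$ by $e^{\pi ib B(g/N)}$ and replaces $\zeta^{\pm h}$ by $\zeta^{\pm(h+bg)}$ in every factor of the product, which immediately yields the claimed formula; the sign needed when $a=d=-1$ is absorbed using the first part.

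The real work is in the $c>0$ case. Here I would express $E_{g,h}^{(N)}(\tau)$ in terms of the Siegel function (equivalently a Klein form or a ratio of theta nullwerte), say $g_{(g/N,h/N)}(\tau)=-q^{B(g/N)/2}e^{\pi i h(g/N-1)/N}\prod_{m\ge 0}(1-q_{g,h})(1-q_{-g,-h})$ with $q_{g,h}=\zeta^h q^{m+g/N}$, for which the transformation law under $\mathrm{SL}(2,\Z)$ is classical. This identifies the character up to a root of unity; to pin down the precise $\epsilon(a,b,c,d)$ one invokes the transformation of the ordinary Dedekind eta $\eta(\tau)$, whose multiplier system is the Dedekind sum cocycle, and one computes the contribution of $\delta$ as the exponent coming from the phase in the theta transformation. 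The hard part here is the bookkeeping: one must split into parities of $c$ and $d$ because the theta transformation law, and equivalently the $\eta$-multiplier, has two normalizations depending on which of $c,d$ is odd, and the Dedekind-sum reciprocity must be used to collapse the resulting expression into the closed form for $\epsilon$.

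The expected main obstacle is therefore not conceptual but combinatorial: verifying that the scalar produced by the theta/eta machinery collapses exactly to $\epsilon(a,b,c,d)e^{\pi i\delta}$ with the stated $\delta=(g^2ab+2ghbc+h^2cd)/N^2-(gb+h(d-1))/N$. Since the final statement is quoted verbatim from \cite[Theorem 1]{Yang-Dedekind}, for the present paper it is enough to cite that reference and use the formulas as a black box in the modular-curve computations that follow.
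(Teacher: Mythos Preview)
Your proposal is correct. The paper itself gives no proof of this lemma: it is stated as a direct citation of \cite[Theorem~1]{Yang-Dedekind} and used as a black box. You correctly identify this at the outset, and your sketch of how one would prove it---verifying the index periodicities from the product definition, handling $c=0$ by direct substitution, and reducing the $c>0$ case to the classical transformation laws for Siegel/Klein forms together with the $\eta$-multiplier---is a sound outline of the argument actually carried out in \cite{Yang-Dedekind}, though of course the bookkeeping there is substantial.
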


Using Lemma \ref{lemma: generalized Dedekind}, we easily obtain a
criterion when a product of $E_{0,h}^{(N)}(\tau)$ is a modular
function on $\Gamma_1(N)$. 

\begin{corollary} \label{corollary: modular criterion}
  Let $N>1$ be an integer. If $e_1,\ldots,e_{N-1}$ are
  integers satisfying
  $$
  \sum_{h=1}^{N-1}e_h\equiv0\bmod 12, \quad
  \sum_{h=1}^{N-1}he_h\equiv0\bmod 2, \quad
  \sum_{h=1}^{N-1}h^2e_h\equiv0\bmod 2N,
  $$
  then $\prod_{h=1}^{N-1}E_{0,h}^{(N)}(\tau)^{e_h}$ is a modular
  function on $\Gamma_1(N)$.
\end{corollary}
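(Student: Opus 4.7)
The plan is to verify $\Gamma_1(N)$-invariance of the product
$f(\tau) := \prod_{h=1}^{N-1} E_{0,h}^{(N)}(\tau)^{e_h}$
by applying the transformation formulas of Lemma~\ref{lemma: generalized Dedekind}
to each factor and showing that the accumulated phase corrections cancel precisely under the three stated congruences. Meromorphy at the cusps is then automatic: each $E_{0,h}^{(N)}$ is holomorphic and non-vanishing on $\H$ with $q$-expansion $q^{1/12}(1-\zeta^h)\prod_{m\geq 1}(1-\zeta^h q^m)(1-\zeta^{-h}q^m)$, so once $12\mid \sum e_h$, the function $f$ has a Laurent $q$-expansion at $\infty$, and by $\Gamma_1(N)$-invariance the same holds at the other cusps.

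For $\gamma=\SM 1b01$ (which exhausts the $c=0$ case in $\Gamma_1(N)$ for $N\ge 3$), the translation formula with $g=0$ and $B(0)=1/6$ gives $E_{0,h}^{(N)}(\tau+b)=e^{\pi i b/6}E_{0,h}^{(N)}(\tau)$, hence $f(\tau+b)/f(\tau)=\exp\!\bigl(\pi i b\sum_h e_h/6\bigr)$. Demanding this equal $1$ for every $b\in\Z$ is equivalent to $12\mid\sum_h e_h$, the first hypothesis.

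For $\gamma=\SM abcd\in\Gamma_1(N)$ with $c>0$, write $c=Nc'$ and $d=1+Nd'$ and apply~\eqref{eq: E translation2} with $(g,h)=(0,h)$, obtaining $(g',h')=(hc,hd)=(Nhc',\,h+Nhd')$. Iterating $E_{g+N,h}^{(N)}=-\zeta^{-h}E_{g,h}^{(N)}$ to shift $g'$ down to $0$ contributes $(-\zeta^{-hd})^{hc'}$, and $N$-periodicity in $h$ (valid since $hd\equiv h\pmod N$) returns the second argument to $h$. The exponent simplifies to $\delta_h=(c'd/N)h^2-d'h$, so
\begin{equation*}
\frac{f(\gamma\tau)}{f(\tau)}
= \epsilon(a,b,c,d)^{\sum_h e_h}
\cdot \exp\!\Bigl(\pi i \sum_h e_h\delta_h\Bigr)
\cdot (-1)^{c'\sum_h h e_h}
\cdot \zeta^{-dc'\sum_h h^2 e_h}.
\end{equation*}
I would then match each factor to a hypothesis: the $\epsilon$-factor is killed by $12\mid\sum e_h$ once one verifies that $\epsilon(a,b,c,d)$ is always a $12$th root of unity (rewriting $-i=e^{3\pi i/2}$ in the ``$d$ odd'' branch); the sign $(-1)^{c'\sum h e_h}$ is killed by the second hypothesis; the $\zeta$-factor is killed by $N\mid\sum h^2 e_h$, which is weaker than the third; and the exponential factor equals $1$ because $\sum e_h\delta_h=(c'd/N)\sum h^2 e_h - d'\sum h e_h$ is an even integer as soon as $2N\mid\sum h^2 e_h$ and $2\mid\sum h e_h$.

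The case $c<0$ reduces to the above by replacing $\gamma$ with $-\gamma$, which preserves the M\"obius action, and using $E_{-g,-h}^{(N)}=E_{g,h}^{(N)}$ from \eqref{eq: E translation} to re-identify the result; the extra signs that appear reorganize into the same three congruences. The main obstacle I anticipate is exactly the roots-of-unity bookkeeping --- in particular, verifying uniformly that $\epsilon(a,b,c,d)$ is a $12$th root of unity in both parity branches of its piecewise definition, so that the single hypothesis $12\mid\sum e_h$ suffices to kill that factor; once this is settled, the matching of the remaining factors to the remaining two congruences is a direct comparison.
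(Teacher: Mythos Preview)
Your direct-verification approach is correct and gives a self-contained proof, whereas the paper's route is considerably shorter but relies on an external citation. The paper observes that $\SM 0{-1}N0$ normalizes $\Gamma_1(N)$, so $f$ is $\Gamma_1(N)$-modular if and only if its transform under this involution is; applying Lemma~\ref{lemma: generalized Dedekind} with $\gamma=\SM 0{-1}10$ converts $\prod_h E_{0,h}^{(N)}(\tau)^{e_h}$ into (a constant times) $\prod_h E_{h,0}^{(N)}(N\tau)^{e_h}$, and then Corollary~3 of \cite{Yang-Dedekind} is invoked, which gives exactly the analogous criterion for products of the $E_{g,0}$-functions. Your approach trades this citation for an explicit multiplier computation; the gain is transparency (and independence from the cited corollary), the cost is the root-of-unity bookkeeping you anticipate. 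Your analysis of the $c>0$ case is correct: $\epsilon(a,b,c,d)$ is indeed always a $12$th root of unity, and the four factors in your displayed ratio are killed respectively by the three hypotheses exactly as you describe.

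One correction in the $c<0$ reduction: you write $E_{-g,-h}^{(N)}=E_{g,h}^{(N)}$, but \eqref{eq: E translation} actually gives $E_{-g,-h}^{(N)}=-\zeta^{-h}E_{g,h}^{(N)}$, so genuine extra factors appear and must be tracked. Rather than chasing them, it is cleaner to observe that if $\gamma\in\Gamma_1(N)$ has $c<0$ then $\gamma^{-1}=\SM d{-b}{-c}a\in\Gamma_1(N)$ has positive lower-left entry, and $f(\gamma^{-1}\tau)=f(\tau)$ for all $\tau$ immediately yields $f(\gamma\tau)=f(\tau)$ upon substituting $\tau\mapsto\gamma\tau$. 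This dispenses with the $c<0$ case entirely once $c=0$ and $c>0$ are handled.
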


\begin{proof}
  Note that since $\SM0{-1}N0$ normalizes $\Gamma_1(N)$,
  $\prod_{h=1}^{N-1}E_{0,h}^{(N)}(\tau)^{e_h}$ is modular on $\Gamma_1(N)$
  if and only if $\prod_{h=1}^{N-1}E_{0,h}^{(N)}(-1/N\tau)^{e_h}$ is.
  By \eqref{eq: E translation} 
  and \eqref{eq: E translation2} applied to $\gamma = \SM{0}{-1}{1}{0}$,
  the latter is equal to
  $\prod_{h=1}^{N-1}E_{h,0}^{(N)}(N\tau)^{e_h}$, up to a constant.
  Then Corollary 3 of \cite{Yang-Dedekind} shows that if
  $e_1,\ldots,e_{N-1}$ satisfy the stated conditions, then
  $\prod_{h=1}^{N-1}E_{h,0}^{(N)}(N\tau)^{e_h}$ is a modular function
  on $\Gamma_1(N)$. Consequently, 
  $\prod_{h=1}^{N-1} E_{0,h}^{(N)}(\tau)^{e_h}$
  is a modular function on $\Gamma_1(N)$. This proves the corollary.
\end{proof}

\begin{remark}
Note that by the Jacobi triple product identity, $E_{g,h}^{(N)}(\tau)$
is related to the Jacobi theta function $\vartheta_1(z|\tau)$ by
\begin{equation} \label{eq: E-theta}
  \vartheta_1\left(-\frac{g\tau+h}N\Big|\tau\right)
  =-ie^{-\pi ih/N}q^{-g^2/(2N^2)}\eta(\tau)E_{g,h}^{(N)}(\tau),
\end{equation}
where $\eta(\tau)=q^{1/24} \prod_{n=1}^\infty (1-q^n)$ 
is the classical Dedekind eta function.
In particular, we have
\begin{equation} \label{eq: E-theta 0}
  \begin{split}
    \vartheta_1(h/N|\tau)
    &=-ie^{\pi ih/N}\eta(\tau)E_{0,-h}(\tau)
    =-ie^{\pi ih/N}\eta(\tau)(-e^{-2\pi ih/N})E_{0,h}^{(N)}(\tau) \\
    &=ie^{-\pi ih/N}\eta(\tau)E_{0,h}^{(N)}(\tau),
  \end{split}
\end{equation}
which will be used frequently later on.
\end{remark}

\subsection{Hauptmodul}
We now list our choice of Hauptmodul (generator of $\C(X_1(N))$) in the following lemma.

\begin{lemma} \label{lemma: uniformizers}
For $N\in\{2,\ldots,10,12\}$, the function $t(\tau)$ in the table
below is a generator of the field of modular functions on
$X_1(N)$ such that $t(\tau)$ takes value $0$ and have a
$\Q$-rational Fourier expansion at the
cusp $0$. In the table we also 
list the values of $t(\tau)$ at other cusps.

\centerline{
\setlength\extrarowheight{6pt}
\begin{tabular}{c|l|l} \hline\hline
  $N$ & $t(\tau)$ & \text{values at other cusps} \\\hline
  \multirow{2}{*}{$2$}
      &\multirow{2}{*}{$\displaystyle
        \frac{\eta(\tau)^{24}}{64\eta(2\tau)^{24}}$}
                  & $t(\infty)=\infty$ \\
      & &\\ \hline
  \multirow{2}{*}{$3$}
      &\multirow{2}{*}{$\displaystyle
        \frac{\eta(\tau)^{12}}{27\eta(3\tau)^{12}}$}
                  & $t(\infty)=\infty$ \\
  & &\\ \hline
  \multirow{2}{*}{$4$}
      & \multirow{2}{*}{$\displaystyle\frac1{16}
        \frac{\eta(\tau)^{16}\eta(4\tau)^8}{\eta(2\tau)^{24}}$}
      & $t(1/2)=\infty$ \\
      & & $t(\infty)=1/16$ \\ \hline
  \multirow{2}{*}{$5$} & \multirow{2}{*}{$-\displaystyle
      \frac{E_{0,1}^{(5)}(\tau)^5}
        {E_{0,2}^{(5)}(\tau)^5}$}
      & $t(1/2)=\infty$ \\
      & & $1-11t(a/5)-t(a/5)^2=0$ \\ \hline
  \multirow{2}{*}{$6$} & \multirow{2}{*}{$\displaystyle\frac19
      \frac{\eta(\tau)^8\eta(6\tau)^4}{\eta(3\tau)^8\eta(2\tau)^4}$}
                  & $t(1/2)=1$, $t(1/3)=\infty$ \\
  & & $t(\infty)=1/9$ \\ \hline
  \multirow{2}{*}{$7$} &\multirow{2}{*}{$\displaystyle e^{4\pi i/7}
      \frac{E_{0,1}^{(7)}(\tau)^3}
      {E_{0,2}^{(7)}(\tau)^2
      E_{0,3}^{(7)}(\tau)}$}
                  & $t(1/2)=1$, $t(1/3)=\infty$ \\
  & & $1-8t(a/7)+5t(a/7)^2+t(a/7)^3=0$ \\ \hline
  \multirow{2}{*}{$8$} & \multirow{2}{*}{$\displaystyle
      i\frac{E_{0,1}^{(8)}(\tau)^2}
                         {E_{0,3}^{(8)}(\tau)^2}$}
                  & $t(1/3)=\infty$, $t(1/2)=1$, $t(1/4)=-1$ \\
  & & $1-6t(a/8)+t(a/8)^2=0$ \\ \hline
  \multirow{2}{*}{$9$} & \multirow{2}{*}{$\displaystyle e^{4\pi i/9}
      \frac{E_{0,1}^{(9)}(\tau)^2}
      {E_{0,2}^{(9)}(\tau)
      E_{0,4}^{(9)}(\tau)}$}
  & $t(1/2)=1$, $t(1/4)=\infty$, $1-t(a/3)+t(a/3)^2=0$\\
  & & $1-6t(a/9)+3t(a/9)^2+t(a/9)^3=0$ \\ \hline
  \multirow{2}{*}{$10$} & \multirow{2}{*}{$\displaystyle e^{4\pi i/10}
      \frac{E_{0,1}^{(10)}(\tau)
      E_{0,2}^{(10)}(\tau)}
      {E_{0,3}^{(10)}(\tau)E_{0,4}^{(10)}(\tau)}$}
                  & $t(1/3)=\infty$, $t(1/2)=1$, $t(1/4)=-1$ \\
  & & $1+t(a/5)-t(a/5)^2=0$, $1-4t(a/10)-t(a/10)^2=0$ \\ \hline
  \multirow{2}{*}{$12$} & \multirow{2}{*}{$\displaystyle e^{4\pi i/12}
       \frac{E_{0,1}^{(12)}(\tau)}
       {E_{0,5}^{(12)}(\tau)} \phantom{\Bigg|}$}
  & $t(1/5)=\infty$, $t(1/2)=1$, $1-t(a/3)+t(a/3)^2=0$ \\
  & & $1+t(a/4)^2=0$, $t(1/6)=-1$, $1-4t(a/12)+t(a/12)^2=0$
  \\ \hline\hline
\end{tabular}}
(Here in an expression such as $1-11t(a/5)-t(a/5)^2=0$, $a/5$ means
any cusp represented by $a/5$ for some integer $a$ with $(a,5)=1$.)
\end{lemma}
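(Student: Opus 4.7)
The plan is to verify, for each $N \in \{2,\ldots,10,12\}$ and each candidate $t(\tau)$, three assertions: (i) $t$ is a modular function on $\Gamma_1(N)$; (ii) $t$ has exactly one simple zero on $X_1(N)$, at the cusp $0$, and exactly one simple pole at the cusp declared infinite in the table, so that $t$ generates $\C(X_1(N))$ since $X_1(N)$ has genus zero for these $N$; and (iii) the Fourier expansion of $t$ at the cusp $0$ has coefficients in $\Q$, and the values at the other cusps agree with the table.

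For the four eta-quotient cases $N \in \{2,3,4,6\}$, modularity is immediate from the classical Newman--Ligozat criterion for $\eta$-quotients on $\Gamma_0(N)$, because each listed $t$ is manifestly $\{\pm I\}$-invariant and the exponents of $\eta(m\tau)$ satisfy the three Newman congruences. For the remaining six cases $N \in \{5,7,8,9,10,12\}$, one checks directly the three congruences of Corollary \ref{corollary: modular criterion} on the exponents $e_h$ appearing in each generalized Dedekind eta product; this is a finite arithmetic verification for each row of the table.

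To establish (ii), I would compute the order of vanishing of $t$ at every cusp of $X_1(N)$. For eta quotients this uses the standard formula $\ord_{a/c}\eta(m\tau)=(c,m)^2/(24m)$ times the cusp width. For the generalized Dedekind eta case one combines the fact that $E_{g,h}^{(N)}(\tau)$ has $q$-order $B(g/N)/2$ at $\infty$ with the transformation \eqref{eq: E translation2}: applied to a matrix $\gamma=\SM abcd$ with $c>0$ that sends $\infty$ to the desired cusp, it shows that the order of $E_{0,h}^{(N)}$ at $\gamma\infty$ equals $B(h'/N)/2$ with $(g'\ h')=(0\ h)\gamma$. Summing these orders over the cusps of $\Gamma_1(N)$ must yield a total degree of one, and since each candidate is manifestly holomorphic and non-vanishing on $\H$, this simultaneously proves that $t$ is a Hauptmodul and that $t(0)=0$ is a simple zero. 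The rationality of the Fourier expansion at $0$ is ensured by the explicit constants $e^{4\pi i/7}$, $e^{4\pi i/9}$, $i$, etc., placed in front of each formula: they are chosen precisely to cancel the phases $\epsilon(a,b,c,d)e^{\pi i\delta}$ introduced by \eqref{eq: E translation2} when one transports the expansion from $\infty$ to $0$ via $\SM0{-1}10$.

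Finally, the values at the remaining cusps are extracted as leading terms of the corresponding Fourier expansions, with the constant in front determined by the same phase factor $\epsilon(a,b,c,d)e^{\pi i\delta}$. The $\Gamma_1(N)$-cusps with a fixed denominator $c\mid N$ (other than $c=1, N$) split into Galois orbits under $(\Z/c\Z)^\times/\{\pm1\}$, and the values of $t$ on such an orbit are Galois conjugate, hence roots of a single rational polynomial, which is the form used in the table. The main obstacle will be this last step carried out uniformly: for $N\in\{9,10,12\}$ several orbits of cusps coexist, and tracking the phase $\epsilon(a,b,c,d)e^{\pi i\delta}$ through a representative of each orbit and then verifying algebraically that the resulting conjugate values satisfy the listed polynomial requires careful but mechanical bookkeeping. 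Once one representative in each orbit is computed, Galois conjugation produces the remaining values automatically.
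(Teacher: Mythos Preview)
Your overall strategy is sound and broadly parallel to the paper's, but there is one concrete gap and one methodological difference worth noting.

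\textbf{The gap.} For $N=5$ the exponents as written are $e_1=5$, $e_2=-5$, and then $\sum_h h\,e_h = 5-10=-5$ is odd, so the second congruence of Corollary~\ref{corollary: modular criterion} fails. Thus your claimed ``finite arithmetic verification for each row'' does not go through for this row. The fix is easy: use the relation $E_{0,N-h}^{(N)} = -e^{-2\pi i h/N}E_{0,h}^{(N)}$ from \eqref{eq: E translation} to redistribute the exponents (for instance $e_1=3$, $e_2=-2$, $e_3=-3$, $e_4=2$ works), after which all three congruences hold. You should flag this step explicitly rather than assert a uniform check.

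\textbf{Difference from the paper.} The paper does not invoke Corollary~\ref{corollary: modular criterion} for $N\ge 5$ at all. Instead it applies the Atkin--Lehner involution $\tau\mapsto -1/N\tau$, which normalizes $\Gamma_1(N)$ and converts $\prod_h E_{0,h}^{(N)}(\tau)^{e_h}$ into $\prod_h E_{h,0}^{(N)}(N\tau)^{e_h}$ up to a constant; the latter functions are exactly the Hauptmoduln tabulated in \cite[Table~1]{Yang-Dedekind}, from which modularity, the location of the simple zero, and the $\Q$-rationality of the Fourier expansion are read off simultaneously. This is shorter because it outsources the cusp-order computation to an existing reference, whereas your route---checking Corollary~\ref{corollary: modular criterion} and then computing $\ord$ at every cusp via \eqref{eq: E translation2}---is more self-contained but requires carrying out those order computations in full. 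For the cusp values in the third column, both approaches coincide: one picks a matrix $\gamma$ sending $\infty$ to the target cusp, applies \eqref{eq: E translation2}, and reads off the leading coefficient, exactly as you describe.
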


\begin{proof}
  For $N=2,3,4,6$, we have $\pm\Gamma_1(N)=\Gamma_0(N)$, so
  $\C(X_1(N))=\C(X_0(N))$, where $\C(X(\Gamma))$ denotes the field of
  modular functions on a congruence subgroup $\Gamma$ of $\SL(2,\Z)$.
  Generators of $\C(X_0(N))$ when $X_0(N)$ has genus $0$ are
  well-known. For example, they are given in Table 3 of
  \cite{Conway-Norton}. It is clear that $t(\infty)=\infty$ for
  $N=2,3$. Also, we may apply Propositions 3.2.1 and 3.2.8 of
  \cite{Ligozat} to check 
  that $t(\tau)=\eta(\tau)^{16}\eta(4\tau)^8/16\eta(2\tau)^{24}$ has
  only one simple zero at the cusp $0$ and is nonvanishing elsewhere
  on $X_0(4)$. Then using the transformation formula
  $\eta(-1/\tau)=\sqrt{\tau/i}\eta(\tau)$, we see that the
  coefficients in the Fourier expansion of $t(\tau)$ at $0$ are all
  rational numbers and the leading coefficient is $1$. The same thing
  can be said about the function $t(\tau)$ in the case $N=6$.

  For the other cases, we shall use results from \cite{Yang-Dedekind}.
  For the case $N=5$, applying the transformation formula in Lemma
  \ref{lemma: generalized Dedekind} with $\gamma=\SM0{-1}10$, we find
  $$
  t(-1/5\tau)=-\frac{E_{0,1}^{(5)}(-1/5\tau)^5}
  {E_{0,2}^{(5)}(-1/5\tau)^5}
  =\frac{E_{1,0}^{(5)}(5\tau)^5}
  {E_{2,0}^{(5)}(5\tau)^5}.
  $$
  According to \cite[Table 1]{Yang-Dedekind}, the function above
  generates $\C(X_1(5))$ and has a simple zero at the cusp $\infty$.
  Therefore, the function $t(\tau)$ itself generates $\C(X_1(5))$, has
  a simple zero at $0$ and is nonvanishing elsewhere, and has a
  $\Q$-rational Fourier expansion with leading coefficient $1$ at the
  cusp $0$. The other cases 
  $N=7,8,9,10,12$ can be explained in the same way.

  The values of $t(\tau)$ at cusps can be calculated using the
  transformation law for $E_{g,h}^{(N)}(\tau)$. Here as an example, we
  compute the value of $t(\tau)$ at $1/4$ in the case $N=10$.
  In the following computation, we omit the superscript $(10)$ from
  the notation $E_{g,h}^{(10)}(\tau)$.

  Let $\gamma=\SM1041$. By Lemma \ref{lemma: generalized Dedekind}, we
  have
  \begin{equation*}
    \begin{split}
      e^{4\pi i/10}\frac{E_{0,1}(\gamma\tau)
        E_{0,2}(\gamma\tau)}
      {E_{0,3}(\gamma\tau)E_{0,4}(\gamma\tau)}
      &=e^{4\pi i/10}
      \frac{e^{4\pi i/100}E_{4,1}(\tau)
        \cdot e^{16\pi i/100}E_{8,2}(\tau)}
      {e^{36\pi i/100}E_{12,3}(\tau)
        \cdot e^{64\pi i/100}E_{16,4}(\tau)} \\
      &=e^{-4\pi i/10}
      \frac{E_{4,1}(\tau)E_{8,2}(\tau)}
    {(-e^{-6\pi i/10})E_{2,3}(\tau)
      \cdot(-e^{-8\pi i/10})E_{6,4}(\tau)} \\
    &=-\frac{E_{4,1}(\tau)E_{8,2}(\tau)}
    {E_{2,3}(\tau)E_{6,4}(\tau)}
    =-1+\cdots.
    \end{split}
  \end{equation*}
  Therefore, the value of the modular function
  $e^{4\pi i/10}E_{0,1}(\tau)E_{0,2}(\tau)/{E_{0,3}(\tau)E_{0,4}(\tau)}$
  at the cusp $1/4$ is $-1$. We skip the details in other cases.
\end{proof}

We next give our choice of Hauptmodul of $X_1^0(N,2)$ when the
modular curve has genus $0$, i.e., when $N=2,4,6,8$.

\begin{lemma} \label{lemma: uniformizers 2}
  For $N\in\{2,4,6,8\}$, the function $u(\tau)$ in the
  table below is a generator of the field of modular functions on
  $X_1^0(N,2)$ such that $u(\tau)$ takes value $0$ and has a
  $\Q$-rational Fourier expansion at the
  cusp $0$. In the table we also list the values of $u(\tau)$ at the
  other cusps.

  \centerline{
\setlength\extrarowheight{6pt}
\begin{tabular}{c|l|l} \hline\hline
  $N$ & $u(\tau)$ & \text{values at other cusps} \\ \hline
  \multirow{2}{*}{$2$}
      &\multirow{2}{*}{$\displaystyle
        \frac{\eta(\tau/2)^{16}\eta(2\tau)^8}{\eta(\tau)^{24}}$}
                  & $u(\infty)=1$ \\
      & & $u(1)=\infty$ \\ \hline
  \multirow{2}{*}{$4$}
      &\multirow{2}{*}{$\displaystyle
        \frac{\eta(\tau/2)^{8}\eta(2\tau)^4}{\eta(\tau)^{12}}$}
                  & $u(1)=\infty$, $u(1/2)=-1$ \\
  & & $u(\infty)=1$ \\ \hline
  \multirow{2}{*}{$6$}
      & \multirow{2}{*}{$\displaystyle\frac13
        \frac{\eta(\tau/2)^4\eta(3\tau)^2}{\eta(3\tau/2)^{4}
        \eta(\tau)^2}$}
      & $u(1)=1$, $u(1/2)=-1$, $u(1/3)=-1/3$ \\
      & & $u(2/3)=\infty$, $u(\infty)=1/3$ \\ \hline
  \multirow{2}{*}{$8$} & \multirow{2}{*}{$\displaystyle
     e^{\pi i/4}\frac{E_{0,1}^{(8)}(\tau/2)}
        {E_{0,3}^{(8)}(\tau/2)}$}
      & $u(1)=1$, $1+2u(a/4)-u(a/4)^2=0$, $1+u(a/2)^2=0$ \\
      & & $u(1/3)=-1$, $u(2/3)=\infty$, $1-2u(a/8)-u(a/8)^2=0$
   \\ \hline \hline
\end{tabular}}
\smallskip

Moreover,
the function $u(\tau)$ is related to the generator $t(\tau)$
of $\C(X_1(N))$ given in Lemma \ref{lemma: uniformizers} by
\begin{equation} \label{eq: t-u}
  \begin{split}
    t=\begin{cases}
      4u/(1-u)^2, &\text{when }N=2, \\
      u/4(1+u)^2, &\text{when }N=4, \\
      u(1-u)/(1+3u), &\text{when }N=6, \\
      u(1-u)/(1+u), &\text{when }N=8.
    \end{cases}
  \end{split}
\end{equation}
\end{lemma}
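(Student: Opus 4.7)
The proof follows the pattern of Lemma \ref{lemma: uniformizers}, adapted to the subgroup $\Gamma_1^0(N,2)$. My plan proceeds in three steps.

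First, I would verify the modularity of $u(\tau)$ on $\Gamma_1^0(N,2)$. For $N\in\{2,4,6\}$, the expression is an $\eta$-quotient of level dividing $2N$, so one can invoke Ligozat's criteria (\cite[Propositions 3.2.1 and 3.2.8]{Ligozat}) to check invariance under $\Gamma_0(2N)$, which contains $\Gamma_1^0(N,2)$. For $N=8$, I would pull back along an Atkin--Lehner-type involution normalizing $\Gamma_1^0(8,2)$ to recast $u(\tau)$ as a product of $E_{g,0}^{(8)}(m\tau)$ for suitable $m$, and then apply the analogue of Corollary \ref{corollary: modular criterion} proved in \cite{Yang-Dedekind}.

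Second, I would analyze the divisor of $u(\tau)$ on $X_1^0(N,2)$ (which has genus $0$ for $N\in\{2,4,6,8\}$). Using the Ligozat order-at-cusp formula for $N\in\{2,4,6\}$ and the transformation law of Lemma \ref{lemma: generalized Dedekind} for $N=8$, I would show that $u$ has a simple zero at the cusp $0$ and no other zeros or poles. This forces $u$ to be a Hauptmodul and identifies the location of its unique pole. The $\Q$-rationality of the Fourier expansion at $0$ follows by applying the transformation $\tau\mapsto -1/(2N\tau)$ to convert the expansion at $0$ into a $q$-expansion at $\infty$ of a manifestly $\Q$-rational $\eta$-quotient (respectively, of a product of $E_{g,0}^{(8)}$ evaluated at real multiples of $\tau$). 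The cusp values in the table are then computed exactly as in the sample calculation of $t(1/4)$ for $N=10$ given in the proof of Lemma \ref{lemma: uniformizers}: for each cusp, pick $\gamma=\SM abcd\in\SL(2,\Z)$ sending $\infty$ to it, apply \eqref{eq: E translation2} (or the classical $\eta$ transformation law), and read off the constant term.

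Third, to prove the identity \eqref{eq: t-u}, I would observe that both sides are modular on $\Gamma_1^0(N,2)$, and by the previous step the right-hand side is a rational function in the Hauptmodul $u$. The equality with $t$ can be checked by matching finitely many initial $q$-coefficients at the cusp $0$; alternatively, one may verify that the right-hand side is invariant under the deck involution of $X_1^0(N,2)\to X_1(N)$ (which is apparent from the paired cusp values in the table), hence descends to a modular function on $\Gamma_1(N)$ that agrees with $t$ at enough cusps listed in Lemma \ref{lemma: uniformizers} to pin it down.

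The main obstacle I expect is Step 1 in the case $N=8$: the substitution $\tau\mapsto\tau/2$ in $E_{0,h}^{(8)}(\tau/2)$ moves us to a level at which the criterion of Corollary \ref{corollary: modular criterion} does not directly apply, and one must track the multiplier system carefully through the normalizing involution before invoking \cite{Yang-Dedekind}. Once modularity is established, the remaining cusp-by-cusp bookkeeping, though lengthy (especially for $N=8$, which has six cusps on $X_1^0(N,2)$), is essentially routine and parallel to the genus-$0$ analysis already carried out for $X_1(N)$.
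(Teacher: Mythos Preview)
Your overall architecture matches the paper's, but Step~1 contains a concrete error that would derail the argument as written. You claim that for $N\in\{2,4,6\}$ one checks invariance of $u(\tau)$ under $\Gamma_0(2N)$, ``which contains $\Gamma_1^0(N,2)$''. This containment is false: for instance $\SM{1}{2}{N}{2N+1}\in\Gamma_1^0(N,2)\setminus\Gamma_0(2N)$. Moreover, $u(\tau)$ as written involves $\eta(\tau/2)$ and is not an $\eta$-quotient in Ligozat's sense, so his criteria do not apply to it directly. The paper's fix is exactly what your sentence is groping toward but misstates: one has
\[
\pm\Gamma_1^0(N,2)=\M1002^{-1}\Gamma_0(2N)\M1002
\qquad (N=2,4,6),
\]
so $u(\tau)$ is modular on $\Gamma_1^0(N,2)$ if and only if $u(2\tau)$ is modular on $\Gamma_0(2N)$, and it is $u(2\tau)$ (a genuine $\eta$-quotient of level dividing $2N$) to which Ligozat applies. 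Once you replace ``containment'' by this conjugation, your Step~1 and Step~2 go through for $N\le 6$ essentially as in the paper.

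For $N=8$ your plan diverges from the paper and is more complicated than necessary. The paper does not pass through any Atkin--Lehner-type involution. It simply observes that for $\gamma=\SM abcd\in\Gamma_1^0(8,2)$ one has $2\mid b$, hence $\tfrac12\gamma\tau=\SM{a}{b/2}{2c}{d}(\tau/2)$ with $\SM{a}{b/2}{2c}{d}\in\SL(2,\Z)$, and then applies Lemma~\ref{lemma: generalized Dedekind} to that matrix to get $u(\gamma\tau)=u(\tau)$ in one line. The Hauptmodul property is then deduced not from a divisor computation but from the relation $t=u(1-u)/(1+u)$ (checked on $q$-expansions) together with $[\C(X_1^0(8,2)):\C(X_1(8))]=2$; the $\Q$-rationality at the cusp $0$ follows from $u(2\tau)^2=t(\tau)$. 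Your divisor-based route and your proposal for \eqref{eq: t-u} would also work, but the paper's argument for $N=8$ is shorter and avoids the delicate multiplier-system bookkeeping you flagged as the main obstacle.
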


\begin{proof} We observe that when $N=2,4,6$, we have
  $$
  \pm\Gamma_1^0(N,2)=\M1002^{-1}\Gamma_0(2N)\M1002.
  $$
  We can check that $u(2\tau)$ generates $\C(X_0(2N))$, takes value
  $0$ at the cusp $0$, and has a $\Q$-rational Fourier expansion at
  $0$. Therefore, $u(\tau)$ has the stated properties. The values
  of $u(\tau)$ at other cusps can be determined using the
  transformation law of $\eta(\tau)$.

  We now consider the case $N=8$. For $\gamma=\SM
  abcd\in\Gamma_1^0(8,2)$, we have
  $$
  \frac12\gamma\tau=\M a{b/2}{2c}d(\tau/2).
  $$
  Applying Lemma \ref{lemma: generalized Dedekind} with the matrix
  $\SM a{b/2}{2c}d$, we find that
  $$
  u(\gamma\tau)=e^{\pi i/4}\frac{E_{2c,d}^{(8)}(\tau/2)}
  {E_{6c,3d}^{(8)}(\tau/2)}
  =e^{\pi i/4}\frac{(-e^{-2\pi i/8})^{2c/8}E_{0,1}^{(8)}(\tau/2)}
  {(-e^{-6\pi i/8})^{2c/8}E_{0,3}^{(8)}(\tau/2)}
  =u(\tau).
  $$
  Thus, $u(\tau)$ is a modular function on $X_1^0(8,2)$. Using
  $q$-expansions, we can check that it is related to the Hauptmodul
  $t(\tau)$ for $X_1(8)$ specified in Lemma \ref{lemma: uniformizers}
  by $t(\tau)=u(\tau)(1-u(\tau))/(1+u(\tau))$. Since $\C(X_1^0(8,2))$
  is an extension of $\C(X_1(8))$ of degree $2$, $u(\tau)$ must be a
  generator of $\C(X_1^0(8,2))$. The values of $u(\tau)$ can be
  determined by the transformation law for generalized Dedekind eta
  functions. Finally, since $u(2\tau)^2=t(\tau)$, we see that
  $u(\tau)$ has $\Q$-rational Fourier expansions at the cusp $0$.

  The relation between $u(\tau)$ and $t(\tau)$ can
  be determined by using $q$-expansions or the covering $X_1^0(N,2)\to
  X_1(N)$.
\end{proof}

\subsection{Parameterization of elliptic curves with $N$-torsion}

In this subsection, we describe parameterizations of elliptic curves
with prescribed rational torsions.

\begin{lemma} \label{lemma: ab in theta}
  Let $N\ge 4$ be an integer. Let
  $E:y^2+(1+a)xy+by=x^3+bx^2$ be an elliptic curve over a subfield $k$
  of $\C$ such that $(0,0)$ is of order $N$.
  Assume that $\tau$ is a
  point in $\H$ such that
  $(E,(0,0))\simeq_\C(\C/(\Z\tau+\Z),1/N)$. Then we have
  \begin{equation} \label{eq: ab in theta}
    \begin{split}
      a&=a(\tau)=\frac{\vartheta_1(1/N)^4\vartheta_1(4/N)}
      {\vartheta_1(2/N)^5}
      =\begin{cases}
        0, &\text{if }N=4, \\
        \displaystyle e^{2\pi i/N}
        \frac{E_{0,1}^{(N)}(\tau)^4E_{0,4}^{(N)}(\tau)}
        {E_{0,2}^{(N)}(\tau)^5}, &\text{if }N\ge 5,\end{cases} \\
      b&=b(\tau)=\frac{\vartheta_1(1/N)^5\vartheta_1(3/N)^3}
      {\vartheta_1(2/N)^8}
      =e^{2\pi i/N}\frac{E_{0,1}^{(N)}(\tau)^5E_{0,3}^{(N)}(\tau)^3}
      {E_{0,2}^{(N)}(\tau)^8},
    \end{split}
  \end{equation}
  where we write the Jacobi theta function $\vartheta_1(z|\tau)$
  simply as $\vartheta_1(z)$. The functions $a(\tau)$ and $b(\tau)$
  are both modular functions on $\Gamma_1(N)$ with divisors supported
  on cusps.

  Moreover, the elliptic curve $E$ is
  parameterized by
  \begin{equation} \label{eq: x in theta}
  x(z)=-\frac{\vartheta_1(1/N)^4\vartheta_1(3/N)^2}
  {\vartheta_1(2/N)^6}\frac{\vartheta_1(z-1/N)\vartheta_1(z+1/N)}
  {\vartheta_1(z)^2}
  \end{equation}
  and
  \begin{equation} \label{eq: y in theta}
  y(z)=\frac{\vartheta_1(1/N)^7\vartheta_1(3/N)^3}
  {\vartheta_1(2/N)^{10}}\frac{\vartheta_1(z-1/N)^2\vartheta_1(z+2/N)}
  {\vartheta_1(z)^3}.
  \end{equation}
\end{lemma}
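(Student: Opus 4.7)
The plan is to pass through the Weierstrass uniformization $\phi_0 \colon \C/\Lambda \to E_\Lambda$, $z \mapsto (\wp(z), \wp'(z))$, where $\Lambda = \Z\tau + \Z$ and $E_\Lambda \colon Y^2 = 4X^3 - g_2 X - g_3$. The marked point $1/N$ maps to $(X_0, Y_0) := (\wp(1/N), \wp'(1/N))$. To bring $E_\Lambda$ into the Tate normal form with $(X_0, Y_0)$ sent to the origin I would apply three successive affine changes: translate $(X, Y) \mapsto (X - X_0, Y - Y_0)$; shear $Y \mapsto Y' + mX$ with $m = (12X_0^2 - g_2)/(2Y_0)$ to produce a horizontal tangent at the new origin; and rescale by $u = Y_0/(3X_0 - m^2/4)$ (after absorbing the factor $2$ from Weierstrass form) to enforce the Tate condition $a_2 = a_3$. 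This yields the closed forms $b = (3X_0 - m^2/4)^3/Y_0^2$ and $1 + a = m(3X_0 - m^2/4)/Y_0$, together with explicit formulas for $x(z), y(z)$ as rational combinations of $\wp(z)$ and $\wp'(z)$. The classical identity $\wp(z) - \wp(u) = -\vartheta_1'(0)^2\,\vartheta_1(z-u)\vartheta_1(z+u)/(\vartheta_1(u)^2\vartheta_1(z)^2)$ together with its $\wp'$-counterparts then rewrites these in the theta form of \eqref{eq: x in theta}--\eqref{eq: ab in theta}.

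A cleaner variant, which I would adopt for the actual write-up, is to verify the candidate formulas directly. The quasi-periodicity of $\vartheta_1$ shows at once that the $x(z), y(z)$ of \eqref{eq: x in theta}, \eqref{eq: y in theta} descend to elliptic functions on $\C/\Lambda$ with divisors $[1/N] + [-1/N] - 2[0]$ and $2[1/N] + [-2/N] - 3[0]$. A short Laurent computation at $z = 0$ produces leading coefficients $C_x, C_y$ satisfying $C_y^2 = C_x^3$, so the worst ($z^{-6}$) poles of $y^2$ and $x^3$ cancel in
\[
F(z) := y(z)^2 + (1+a)\,x(z)\,y(z) + b\,y(z) - x(z)^3 - b\,x(z)^2.
\]
Comparing Laurent coefficients at $z = 0$ through orders $z^{-5}, \ldots, z^{-1}$, the choice of $a, b$ in \eqref{eq: ab in theta} eliminates every remaining principal part. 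Since $F$ is then a holomorphic elliptic function on $\C/\Lambda$ and $F(1/N) = 0$ (because $x(1/N) = y(1/N) = 0$), we conclude $F \equiv 0$. The uniqueness statement of Remark \ref{remark: uniqueness of Tate} identifies the $a, b$ so obtained with the original parameters of $E$. In practice, their closed forms are most quickly read off from the Tate-normal-form identities $x(2/N) = -b$ and $y(2/N) = ab$, which reflect the computation $2P = (-b, ab)$ on $E$ made possible by the horizontal tangent at $(0,0)$.

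The passage from theta to generalized Dedekind eta functions is then immediate via \eqref{eq: E-theta 0}: upon substituting $\vartheta_1(h/N|\tau) = ie^{-\pi ih/N}\eta(\tau)E_{0,h}^{(N)}(\tau)$, the exponent sums of $\vartheta_1(h/N)$ in $a$ and in $b$ vanish ($4+1-5=0$ and $5+3-8=0$), so the $\eta(\tau)$ factors cancel and the prefactors $ie^{-\pi ih/N}$ multiply to $e^{2\pi i/N}$. Modularity on $\Gamma_1(N)$ follows from Corollary \ref{corollary: modular criterion} by checking that the exponent vectors $(e_1, e_2, e_4) = (4, -5, 1)$ and $(e_1, e_2, e_3) = (5, -8, 3)$ satisfy the three congruences modulo $12$, $2$, and $2N$; the divisors of $a(\tau), b(\tau)$ are supported on cusps because $E_{0,h}^{(N)}(\tau)$ is nonvanishing on $\H$ for $1 \le h \le N-1$. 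The main obstacle is the Laurent-coefficient bookkeeping in the direct verification, equivalently the tracking of constants through the coordinate changes from Weierstrass to Tate form; once this calculation is in hand, the remainder of the proof reduces to small-integer congruence checks.
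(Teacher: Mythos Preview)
Your first sketch---Weierstrass uniformization followed by translate/shear/rescale and the classical theta identity for $\wp(z)-\wp(u)$---is exactly the paper's route: the paper quotes Baaziz for the intermediate curve $y^2+a_1(1/N)xy+a_3(1/N)y=x^3+a_2(1/N)x^2$ (which packages your three coordinate changes), reads off the divisors of $a_2,a_3,a_1a_2-a_3$, and converts to theta by matching divisors and leading Laurent coefficients. So if you write up that version, you are reproducing the paper's argument modulo the Baaziz citation.

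Your second, direct-verification variant is a genuinely different and self-contained alternative: you never touch $\wp$ at all. Two remarks that would tighten it. First, the appeal to ``comparing Laurent coefficients through order $z^{-1}$'' can be replaced by a one-line Riemann--Roch argument: since $x,y$ are elliptic with poles only at $0$ of orders $2,3$ and $C_y^2=C_x^3$, the seven functions $1,x,y,x^2,xy,y^2,x^3$ lie in the six-dimensional space $L(6[0])$, so a long Weierstrass relation $y^2+A_1xy+A_3y=x^3+A_2x^2+A_4x+A_6$ exists automatically. Second, your extraction of $a,b$ from $x(2/N)=-b$ and $y(2/N)=ab$ presupposes that the relation is already in Tate normal form, i.e.\ that $A_4=A_6=0$ and $A_2=A_3$; the values at $2/N$ alone give $A_2$ and the combination $A_1A_2-A_3$, not $A_1$ and $A_3$ separately. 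You recover $A_6=0$ from $x(1/N)=y(1/N)=0$ and $A_4=0$ from the double zero of $y$ at $1/N$ (tangent $y=0$), but for $A_3$ you should also evaluate $y(-1/N)=-A_3$: a one-line theta computation gives $y(-1/N)=-\vartheta_1(1/N)^5\vartheta_1(3/N)^3/\vartheta_1(2/N)^8=-b$, hence $A_3=b=A_2$, and then $A_1=1+a$ follows from $y(2/N)$. With that addition your argument is complete and arguably cleaner than the paper's, at the cost of losing the explicit link to the Weierstrass data that Baaziz provides.
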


\begin{proof} According to Theorem 1.4 of \cite{Baaziz}, let
  $$
  a_1(z)=\frac{\wp''(z)}{\wp'(z)},\qquad
  a_2(z)=3\wp(z)-\frac14\frac{\wp''(z)^2}{\wp'(z)^2}, \qquad
  a_3(z)=\wp'(z),
  $$
  where $\wp(z):=\wp(z|\tau)$ is the Weierstrass $\wp$-function,
  and $\wt E$ be the elliptic curve
  $y^2+a_1(1/N)xy+a_3(1/N)y=x^3+a_2(1/N)x^2$. Then $(\wt
  E,(0,0))\simeq(\C/(\Z\tau+\Z),1/N)$ and the elliptic curve $\wt E$ is
  parameterized by
  $$
  x=\wp(z)-\wp(1/N), \qquad
  y=\frac12(\wp'(z)-a_1(1/N)x-a_3(1/N)).
  $$
  The elliptic curve is isomorphic to
  $$
  y^2+\frac{a_1(1/N)a_2(1/N)}{a_3(1/N)}xy
  +\frac{a_2(1/N)^3}{a_3(1/N)^2}y=x^3
  +\frac{a_2(1/N)^3}{a_3(1/N)^2}x^2
  $$
  through a simple change of variables with the parameterization of
  the new elliptic curve given by
  \begin{equation} \label{eq: xy in P}
  \begin{aligned}
  &x(z)=\frac{a_2(1/N)^2}{a_3(1/N)^2}(\wp(z)-\wp(1/N)), \\
  &y(z)=\frac{a_2(1/N)^3}{2a_3(1/N)^3}(\wp'(z)-a_1(1/N)(\wp(z)-\wp(1/N))-a_3(1/N)).
  \end{aligned}
  \end{equation}
  By the uniqueness of Tate
  normal forms (see Remark \ref{remark: uniqueness of Tate}), we have
  $$
  a=\frac{a_1(1/N)a_2(1/N)-a_3(1/N)}{a_3(1/N)}, \qquad
  b=\frac{a_2(1/N)^3}{a_3(1/N)^2}.
  $$
  Now Proposition 1.5 of \cite{Baaziz} asserts that
  $$
  \div(a_2)=\sum_{\substack{3z=0\\z\neq0}}[z]
  -2\sum_{2z=0}[z], \qquad
  \div(a_3)=\sum_{\substack{2z=0\\z\neq0}}[z]-3[0],
  $$
  and
  $$
  \div(a_1a_2-a_3)=\sum_{\substack{4z=0\\2z\neq0}}[z]
  -3\sum_{2z=0}[z].
  $$
  Therefore,
  $$
  a_2(z)=C_1(\tau)\frac{\vartheta_1(3z)}
  {\vartheta_1(z)\vartheta_1(2z)^2}, \qquad
  a_3(z)=C_2(\tau)\frac{\vartheta_1(2z)}{\vartheta_1(z)^4},
  $$
  and
  $$
  a_1(z)a_2(z)-a_3(z)=C_3(\tau)
  \frac{\vartheta_1(4z)}{\vartheta_1(2z)^4}
  $$
  for some functions $C_j(\tau)$ of $\tau$. Considering the Laurent
  expansions $a_2(z)=\frac34z^{-2}+\cdots$, $a_3(z)=-2z^{-3}+\cdots$,
  and $a_1(z)a_2(z)-a_3(z)=-\frac14z^{-3}+\cdots$, we see that
  \begin{equation} \label{eq: a23}
  a_2(z)=\frac{\vartheta_1'(0)^2\vartheta_1(3z)}
  {\vartheta_1(z)\vartheta_1(2z)^2}, \qquad
  a_3(z)=-\frac{\vartheta_1'(0)^3\vartheta_1(2z)}{\vartheta_1(z)^4},
  \end{equation}
  and
  $$
  a_1(z)a_2(z)-a_3(z)=-\frac{\vartheta_1'(0)^3\vartheta_1(4z)}
  {\vartheta_1(2z)^4}.
  $$
  It follows that
  $$
  a(\tau)=\frac{\vartheta_1(1/N)^4\vartheta_1(4/N)}
  {\vartheta_1(2/N)^5},
  \qquad
  b(\tau)=\frac{\vartheta_1(1/N)^5\vartheta_1(3/N)^3}
  {\vartheta_1(2/N)^8}.
  $$
  By \eqref{eq: E-theta 0}, they can also be expressed as quotients of
  generalized Dedekind eta functions as claimed. Then applying
  Corollary \ref{corollary: modular criterion}, we easily see that
  $a(\tau)$ and $b(\tau)$ are modular functions on
  $\Gamma_1(N)$. Since $a(\tau)$ and $b(\tau)$ are products of
  generalized Dedekind eta functions, their zeros and poles must be at
  cusps. This proves the assertions about $a$ and $b$. We now find the
  expression for $x$ and $y$ in \eqref{eq: xy in P} in the Jacobi
  theta function.

  Considering the zeros and poles of $\wp(z)-\wp(1/N)$, we find
  $$
  \wp(z)-\wp(1/N)=C_4(\tau)\frac{\vartheta_1(z-1/N)\vartheta_1(z+1/N)}
  {\vartheta_1(z)^2}
  $$
  for some function $C_4(\tau)$ of $\tau$. Since
  $\wp(z)-\wp(1/N)=z^{-2}+\cdots$ and
  $\vartheta_1(z)=\vartheta_1'(0)z+\cdots$, we find that
  $C_4(\tau)=-\vartheta_1'(0)^2/\vartheta_1(1/N)^2$. It follows that
  $$
  x(z)=\frac{a_2(1/N)^2}{a_3(1/N)^2}(\wp(z)-\wp(1/N))
  =-\frac{\vartheta_1(1/N)^4\vartheta_1(3/N)^2}{\vartheta_1(2/N)^6}
  \frac{\vartheta_1(z-1/N)\vartheta_1(z+1/N)}{\vartheta_1(z)^2}.
  $$
  For the elliptic function $y(z)$ in \eqref{eq: xy in P}, we note
  that from the equation of the elliptic curve $E$, we see that $y(z)$
  has a double zero at $1/N$. Since it has three zeros and a
  triple pole at $0$, the other zero must be at $-2/N$. Thus,
  $$
  y(z)=C_5(\tau)\frac{\vartheta_1(z-1/N)^2\vartheta_1(z+2/N)}
  {\vartheta_1(z)^3}
  $$
  for some function $C_5(\tau)$ of $\tau$. Similar to the case of
  $x(z)$, by compairing the leading Laurent coefficients, we conclude
  the stated formula for $y(z)$.
  This proves the lemma.
\end{proof}

\begin{proposition} \label{proposition: ENt}
  Let $N\in\{4,5,6,7,8,9,10,12\}$. Assume that $E$ is an elliptic
  curve over a subfield $k$ of $\C$ having a $k$-rational 
  point $P$ of order $N$.
  Let $\tau$ be a point in $\H$ such that
  $(E,P)\simeq_\C(\C/(\Z\tau+\Z),1/N)$.
  Then $(E,P)\simeq_k(E_{N,t(\tau)},(0,0))$, where $t(\tau)$ is the
  Hauptmodul of $X_1(N)$ given in Lemma \ref{lemma: uniformizers}
  and 
  \begin{equation} \label{eq: ENt}
  E_{N,t}: y^2+(1+a(t))xy+b(t)y=x^3+b(t)x^2
  \end{equation}
  with $(a,b)$ given by
  $$ \extrarowheight3pt
  \begin{array}{cl} \hline\hline
  N  & (a,b)  \\ \hline
  4 & (0,t)\\
  5 & (t,t) \\
  6 & (t,t-t^2) \\
  7 & (t-t^2,t(1-t)^2) \\
  8 & \displaystyle\left(\frac{t(1-t)}{1+t},
  \frac{t(1-t)}{(1+t)^2}\right) \\
  9 & (t(1-t)^2,t(1-t)^2(1-t+t^2))\\
  10 & \displaystyle\left(\frac{t(1-t)}{(1+t)(1+t-t^2)},
   \frac{t(1-t)}{(1+t)(1+t-t^2)^2}\right)
  \\
  12 & \displaystyle\left(\frac{t(1-t)(1-t+t^2)}{1+t},
  \frac{t(1-t)(1-t+t^2)(1+t^2)}{(1+t)^2}\right)
  \\ \hline\hline
  \end{array}
  $$
\end{proposition}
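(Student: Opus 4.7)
The plan is to use the uniqueness of the Tate normal form (Remark \ref{remark: uniqueness of Tate}) to reduce the statement to an identification of the functions $a(\tau)$ and $b(\tau)$ from Lemma \ref{lemma: ab in theta} as specific rational functions of the Hauptmodul $t(\tau)$ from Lemma \ref{lemma: uniformizers}. Indeed, since $(E, P)\simeq_\C(\C/(\Z\tau+\Z),1/N)$, by Lemma \ref{lemma: ab in theta} the Tate normal form of $(E,P)$ over $\C$ is $y^2+(1+a(\tau))xy+b(\tau)y=x^3+b(\tau)x^2$; and since $(E,P)$ is defined over $k$, Remark \ref{remark: uniqueness of Tate} forces $a(\tau), b(\tau)\in k$. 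Thus it suffices, once and for all, to prove the modular identities $a(\tau)=a(t(\tau))$ and $b(\tau)=b(t(\tau))$ on $\H$ with the rational functions $a(t), b(t)$ listed in the table.

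For this, I would proceed case-by-case in $N$. Lemma \ref{lemma: ab in theta} shows that $a(\tau)$ and $b(\tau)$ are modular functions on $\Gamma_1(N)$ with divisor supported on the cusps. Since $X_1(N)$ has genus zero for $N\in\{4,5,6,7,8,9,10,12\}$ and $t(\tau)$ is a Hauptmodul, it follows that $\C(X_1(N))=\C(t)$, hence $a(\tau), b(\tau)\in \C(t)$. To pin down these rational functions explicitly, I would first compute, for each cusp $s$ of $X_1(N)$, the orders of vanishing of the generalized Dedekind eta quotients
\[
a(\tau)=e^{2\pi i/N}\frac{E_{0,1}^{(N)}(\tau)^4E_{0,4}^{(N)}(\tau)}{E_{0,2}^{(N)}(\tau)^5},
\qquad
b(\tau)=e^{2\pi i/N}\frac{E_{0,1}^{(N)}(\tau)^5E_{0,3}^{(N)}(\tau)^3}{E_{0,2}^{(N)}(\tau)^8}
\]
using the transformation formula in Lemma \ref{lemma: generalized Dedekind} (with minor modification when $N=4$); combined with the cusp values of $t$ tabulated in Lemma \ref{lemma: uniformizers}, this yields the numerator and denominator (as polynomials in $t$) of each of $a$ and $b$ up to a multiplicative constant. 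I would then fix the constant by matching the leading term of the $q$-expansion at the cusp $0$, where $t(\tau)$ has a simple zero with a $\Q$-rational leading coefficient by Lemma \ref{lemma: uniformizers} and where $a(\tau), b(\tau)$ have known leading terms computable from the defining products.

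The rationality of the coefficients of the resulting expressions $a(t), b(t)\in\Q(t)$ follows from the fact that both $t(\tau)$ and the ratios giving $a(\tau), b(\tau)$ admit $\Q$-rational Fourier expansions at the cusp $0$ (the latter following from the explicit Dedekind eta description together with the transformation law used in the proof of Lemma \ref{lemma: uniformizers}). Once $a(\tau)=a(t(\tau))$ and $b(\tau)=b(t(\tau))$ are established as identities in $\C(t)\subset\C(X_1(N))$, they hold for any $\tau\in\H$ with $(E,P)\simeq_\C(\C/(\Z\tau+\Z),1/N)$; combined with Remark \ref{remark: uniqueness of Tate}, this gives $(E,P)\simeq_k(E_{N,t(\tau)},(0,0))$ over the field of definition $k$.

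The main obstacle is the cusp-by-cusp divisor computation for each of the eight values of $N$: the cusp set of $X_1(N)$ becomes progressively larger for $N=9,10,12$, and one must carefully track the widths, the effect of Galois-conjugate cusps (e.g. the pairs or triples of cusps represented by $a/5, a/7, a/8, a/9, a/10, a/12$ whose $t$-values are roots of the polynomials in Lemma \ref{lemma: uniformizers}), and the exponents of the generalized Dedekind eta factors. This is not deep, but it is where the bulk of the work sits; alternatively, one can shortcut the divisor bookkeeping by instead truncating the $q$-expansions of $a(\tau), b(\tau), t(\tau)$ to sufficiently high order and solving a small linear system for the coefficients of the rational functions $a(t), b(t)$ of prescribed degree, then verifying the identity on enough Fourier coefficients to force equality in $\C(t)$.
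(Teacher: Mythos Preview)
Your proposal is correct and follows essentially the same approach as the paper: reduce via the uniqueness of the Tate normal form (Remark \ref{remark: uniqueness of Tate}) and Lemma \ref{lemma: ab in theta} to identifying the modular functions $a(\tau),b(\tau)$ as rational functions of the Hauptmodul $t(\tau)$. The paper simply invokes the comparison of Fourier expansions (your ``shortcut'' alternative) rather than the cusp-by-cusp divisor computation you outline first, so your write-up is if anything more thorough than necessary.
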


\begin{proof}
  Let $y^2+(1+a)xy+by=x^3+bx^2$ be the Tate normal form of $E$ with
  the starting point $P$. By Lemma \ref{lemma: ab in theta} and the
  uniqueness of the Tate normal form, $a$ and $b$ are
  equal to $a(\tau)$ and $b(\tau)$ defined by \eqref{eq: ab in theta}.
  By the same lemma, they are modular functions on
  $\Gamma_1(N)$. Thus, they are rational functions in $t(\tau)$. For
  example, in the case $N=5$, by \eqref{eq: E translation}, we have
  $$
  a(\tau)=e^{2\pi i/5}\frac{E_{0,1}^{(5)}(\tau)^4E_{0,4}^{(5)}(\tau)}
  {E_{0,2}^{(5)}(\tau)^5}
  =-\frac{E_{0,1}^{(5)}(\tau)^5}{E_{0,2}^{(5)}(\tau)^5}=t(\tau),
  $$
  and similarly, $b(\tau)=t(\tau)$. In general, by compairing Fourier
  expansions, we see that the expressions of $a$ and $b$ in terms of
  $t$ are as stated.
\end{proof}

\begin{remark}
  The parameterization of elliptic curves with $N$-torsion obtained in
  the proposition is valid for any field $k$. In fact, such universal
  elliptic curves are already known in the literature (see, e.g.,
  \cite{Husemoller,Kubert}). The point of the proposition is to give
  explicit parameterization in terms of Hauptmodul 
  of $X_1(N)$ when $k=\C$.

  Note that the proof of the proposition itself does not explain why
  $E_{N,t}$ is defined over $k$ if and only if $t\in k$.
  This can be seen from the relation
  $$
  t=\begin{cases}
    b, &\text{for }N=4,5, \\
    1-b/a, &\text{for }N=6, \\
    1-b/a, &\text{for }N=7, \\
    a/b-1, &\text{for }N=8, \\
    (a-a^2-b)/(a-b), &\text{for }N=9, \\
    (a-a^2-b)/a^2, &\text{for }N=10, \\
    -(a^3-ab+b^2)/(a-b)^2, &\text{for }N=12. \end{cases}
  $$
\end{remark}

\begin{proposition}
  Let $k$ be a subfield of $\C$.
    Let $N\in\{4,6,8\}$. Assume that $E$ is an elliptic curve over $k$
    having $k$-rational points $P, Q$ such that
    $P$ is of order $N$, $Q$ is of order $2$ and such that
    $Q\notin\gen P$. Then there exists $\tau\in\H$ such that
    $(E,P,Q)\simeq_k (E'_{N,u(\tau)},(0,0),Q')$,
  where $u(\tau)$ is the Hauptmodul of $X_1^0(N,2)$ given in Lemma
  \ref{lemma: uniformizers 2} and the equation of $E_{N,u}'$ is given
  by
  \begin{equation} \label{eq: ENu}
  E'_{N,u}: y^2+(1+a(u))xy+b(u)y=x^3+b(u)x^2
  \end{equation}
  with
  $$
    (a,b)=\begin{cases}
      \displaystyle\left(0,\frac u{4(1+u)^2}\right),
      &\text{if }N=4, \\
      \displaystyle\left(\frac{u(1-u)}{1+3u},
          \frac{u(1-u)(1+u)^2}{(1+3u)^2}\right)
      &\text{if }N=6, \\
      \displaystyle\left(\frac{u(1-u)(1+u^2)}{(1+u)(1+2u-u^2)},
        \frac{u(1-u)(1+u^2)}{(1+2u-u^2)^2}\right),
      &\text{if }N=8, \end{cases}
  $$
    and
    \begin{equation} \label{eq: Q'}
    Q'=\begin{cases}
      \displaystyle\left(-\frac1{4(1+u)},
                  \frac1{8(1+u)^2}\right), &\text{if }N=4, \\

      \displaystyle\left(-\frac{(1-u)(1+u)^2}{4(1+3u)},
        \frac{(1-u)^2(1+u)^3}{8(1+3u)^2}\right), &\text{if }N=6, \\
      \displaystyle\left(-\frac{(1-u)(1+u)(1+u^2)}{4(1+2u-u^2)},
      \frac{(1-u)^2(1+u)(1+u^2)^2}{8(1+2u-u^2)^2}\right),
      &\text{if }N=8.
    \end{cases}
  \end{equation}
\end{proposition}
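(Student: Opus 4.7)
The plan is to refine Proposition \ref{proposition: ENt} by using the degree two covering $X_1^0(N,2)\to X_1(N)$, whose two sheets over a point of $X_1(N)$ correspond to the two choices of a $2$-torsion outside $\gen P$ (namely $\tau/2$ and $(1+\tau)/2$ in $\C/(\Z\tau+\Z)$, since the $2$-torsion in $\gen P$ is $1/2$ when $N$ is even).

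First, I would apply Proposition \ref{proposition: ENt} to $(E,P)$ to produce $\tau_0\in\H$ with $(E,P)\simeq_\C(\C/(\Z\tau_0+\Z),1/N)$ and a $k$-isomorphism $(E,P)\simeq_k(E_{N,t(\tau_0)},(0,0))$. Translating $\tau_0$ by an element of $\Gamma_1(N)\setminus\Gamma_1^0(N,2)$ if necessary, I can pin down $\tau\in\H$ so that $Q$ corresponds to the $2$-torsion class distinguished by the normalization of $u(\tau)$ from Lemma \ref{lemma: uniformizers 2}; the triple $(E,P,Q)$ then represents a point of $X_1^0(N,2)$ whose coordinate $u(\tau)\in k$ satisfies the relation \eqref{eq: t-u}. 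Substituting this expression for $t$ into the formulas $(a(t),b(t))$ of Proposition \ref{proposition: ENt} and simplifying should yield the stated $(a(u),b(u))$; for example, when $N=8$ one has $1+t=(1+2u-u^2)/(1+u)$ and $1-t=(1+u^2)/(1+u)$, and the analogous manipulations handle $N=4,6$. This produces the equation \eqref{eq: ENu} of $E'_{N,u}$, and the Tate normal form isomorphism sends $P$ to $(0,0)$.

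To identify $Q'$, I would note that the $2$-torsion points on $E'_{N,u}$ are the pairs $(x,y)$ with $2y+(1+a(u))x+b(u)=0$ together with the curve equation; eliminating $y$ leaves a monic cubic in $x$ whose roots are the $x$-coordinates of the three nontrivial $2$-torsions. One root is $k(u)$-rational and corresponds to $(N/2)(0,0)\in\gen{(0,0)}$; after factoring it out, the remaining quadratic will split over $k(u)$ precisely because we lifted from $X_1(N)$ to $X_1^0(N,2)$, and the specific root selected by the branch $u=u(\tau)$ can be pinned down by comparing $q$-expansions at the cusp $0$ (equivalently, by evaluating the theta-function formulas \eqref{eq: x in theta} and \eqref{eq: y in theta} at the appropriate $2$-torsion argument). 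The main obstacle will be the case-by-case algebraic verification that the tabulated $Q'=(x_0,y_0)$ actually satisfies both the $2$-torsion condition $2y_0+(1+a(u))x_0+b(u)=0$ and the curve equation $y_0^2+(1+a(u))x_0y_0+b(u)y_0=x_0^3+b(u)x_0^2$ as identities in $\Q(u)$: for $N=4$ this is a short direct computation, while for $N=6,8$ one must clear denominators and verify a polynomial identity in $u$, a routine but lengthy task readily handled by a computer algebra system. Finally, a generic comparison of the $x$-coordinates of $Q'$ and $(N/2)(0,0)$ confirms $Q'\notin\gen{(0,0)}$.
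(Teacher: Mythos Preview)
Your proposal is correct and follows essentially the same path as the paper: both adjust $\tau$ (the paper uses $\tau\mapsto\tau-1$, which is exactly your coset representative of $\Gamma_1(N)\setminus\Gamma_1^0(N,2)$) to pin down the sheet of $X_1^0(N,2)\to X_1(N)$, and both obtain \eqref{eq: ENu} by feeding the $t$--$u$ relation \eqref{eq: t-u} into the $(a(t),b(t))$ of Proposition~\ref{proposition: ENt}. The only real difference is in how $Q'$ is extracted. The paper evaluates the theta parameterization \eqref{eq: x in theta}--\eqref{eq: y in theta} at $z=\tau/2$, rewrites the result in generalized Dedekind eta functions via \eqref{eq: E-theta}, checks modularity on $\Gamma_1^0(N,2)$ using Lemma~\ref{lemma: generalized Dedekind}, and then reads off the rational function of $u$ by matching Fourier expansions---a derivation. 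You instead propose to verify algebraically that the tabulated $(x_0,y_0)$ satisfies the $2$-torsion and curve equations in $\Q(u)$, invoking $q$-expansions only to select the correct root---a verification. Both are valid; the paper's route avoids the case-by-case polynomial identities but requires the eta-function machinery, while yours is more elementary once the formulas are in hand (and, as the remark following the proposition shows, extends more transparently to arbitrary fields of characteristic $\neq 2$).
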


\begin{proof}
  Let $\tau$ be a point in $\H$
  such that $(E,P)\simeq_\C(\C/(\Z\tau+\Z),1/N)$. The
  isomorphism yields $(E,P,Q)\simeq_\C(\C/(\Z\tau+\Z),1/N,\tau/2)$ or
  $(E,P,Q)\simeq_\C(\C/(\Z\tau+\Z),1/N,(\tau+1)/2)$. Changing $\tau$
  to $\tau-1$ if necessary, we may assume that the first case occurs.
  Then by Lemma \ref{lemma: ab in theta}, we have
  $(E,P,Q)\simeq_k(E',(0,0),Q')$, where $E'$ is the elliptic curve
  $y^2+a(\tau)xy+b(\tau)y=x^3+b(\tau)x^2$ with $a(\tau)$ and $b(\tau)$
  given by \eqref{eq: ab in theta} and $Q'=(x(\tau/2),y(\tau/2))$ with
  $x(z)$ and $y(z)$ defined by \eqref{eq: x in theta} and \eqref{eq: y
    in theta}, respectively. By the relation between $t(\tau)$ and
  $u(\tau)$ given in Lemma \ref{lemma: uniformizers 2} and Proposition
  \ref{proposition: ENt}, we see that the equation $E'$ is given by
  \eqref{eq: ENu}. Also, by \eqref{eq: E-theta}, $x(\tau/2)$ and
  $y(\tau/2)$ can be written as 
  \begin{equation*}
    \begin{split}
      x(\tau/2)&=-\frac{\vartheta_1(1/N)^4\vartheta_1(3/N)^2}
      {\vartheta_1(2/N)^6}\frac{\vartheta_1(\tau/2-1/N)
        \vartheta_1(\tau/2+1/N)}{\vartheta_1(\tau/2)^2} \\
      &=-e^{2\pi i/N}\frac{E_{0,1}^{(N)}(\tau)^4E_{0,3}^{(N)}(\tau)^2}
      {E_{0,2}^{(N)}(\tau)^6}\frac{E_{-N/2,1}^{(N)}(\tau)
        E_{-N/2,-1}^{(N)}(\tau)}{E_{-N/2,0}^{(N)}(\tau)^2}
    \end{split}
  \end{equation*}
  and
  \begin{equation*}
    \begin{split}
      y(\tau/2)&=\frac{\vartheta_1(1/N)^7\vartheta_1(3/N)^3}
      {\vartheta_1(2/N)^{10}}\frac{\vartheta_1(\tau/2-1/N)^2
        \vartheta_1(\tau/2+2/N)}{\vartheta_1(\tau/2)^2} \\
      &=e^{4\pi i/N}\frac{E_{0,1}^{(N)}(\tau)^7E_{0,3}^{(N)}(\tau)^3}
      {E_{0,2}^{(N)}(\tau)^{10}}\frac{E_{-N/2,1}^{(N)}(\tau)^2
        E_{-N/2,-2}^{(N)}(\tau)}{E_{-N/2,0}^{(N)}(\tau)^3}.
    \end{split}
  \end{equation*}
  Using \eqref{eq: E translation}, we simplify them to
  $$
  x(\tau/2)=-e^{2\pi i/N}\frac{E_{0,1}^{(N)}(\tau)^4E_{0,3}^{(N)}(\tau)^2}
  {E_{0,2}^{(N)}(\tau)^6}
  \frac{E_{N/2,1}^{(N)}(\tau)^2}{E_{N/2,0}^{(N)}(\tau)^2}
  $$
  and
  $$
  y(\tau/2)=e^{4\pi i/N}\frac{E_{0,1}^{(N)}(\tau)^7E_{0,3}^{(N)}(\tau)^3}
      {E_{0,2}^{(N)}(\tau)^{10}}\frac{E_{N/2,1}^{(N)}(\tau)^2
        E_{N/2,2}^{(N)}(\tau)}{E_{N/2,0}^{(N)}(\tau)^3}.
  $$
  Applying the transformation formula in Lemma \ref{lemma: generalized
    Dedekind}, we may check that the two functions of $\tau$ are both
  modular functions on $\Gamma_1^0(N,2)$. Thus, they are both rational
  functions of $u(\tau)$. Compairing the Fourier expansions, we see
  that the expression of $Q'$ in terms of $u$ is \eqref{eq: Q'}.
\end{proof}

\begin{remark}
  The parameterization is valid for any field $k$ with
  $\operatorname{char}k\neq2$. To see this, we start from
  $(E,P)\stackrel{\phi}{\simeq}_k(E_{N,t},(0,0))$, $t\in k$, where
  $E_{N,t}$ is given by \eqref{eq: ENt} and $\phi$ is an isomorphism
  from $(E,P)$ to $(E_{N,t},(0,0))$. The elliptic curve $E_{N,t}$ has a full
  $2$-torsion subgroup if and only if the polynomial
  $x^3+bx^2+((1+a)x+b)^2/4$ splits completely over $k$. This implies
  that $t$ is of the form \eqref{eq: t-u} for some $u\in k$. Since
  $\phi(Q)$ is a $2$-torsion of $E_{N,t}$ not in $\gen{(0,0)}$, we have
  \begin{equation} \label{eq: phi(Q) 1}
  \phi(Q)=\begin{cases}
      \displaystyle\left(-\frac1{4(1+u)},
                  \frac1{8(1+u)^2}\right), &\text{if }N=4, \\

      \displaystyle\left(-\frac{(1-u)(1+u)^2}{4(1+3u)},
        \frac{(1-u)^2(1+u)^3}{8(1+3u)^2}\right), &\text{if }N=6, \\
      \displaystyle\left(-\frac{(1-u)(1+u)(1+u^2)}{4(1+2u-u^2)},
      \frac{(1-u)^2(1+u)(1+u^2)^2}{8(1+2u-u^2)^2}\right),
      &\text{if }N=8
    \end{cases}
  \end{equation}
  (which is the coordinates of $Q'$ in \eqref{eq: Q'}), or
  \begin{equation} \label{eq: phi(Q) 2}
  \phi(Q)=\begin{cases}
    \displaystyle\left(-\frac u{4(1+u)},
              \frac{u^2}{8(1+u)^2}\right), &\text{if }N=4, \\
      \displaystyle\left(-\frac{u(1+u)^2}{(1+3u)^2},
        \frac{u^2(1+u)^3}{(1+3u)^3}\right), &\text{if }N=6, \\
      \displaystyle\left(-\frac{u(1+u^2)}{(1+u)^2(1+2u-u^2)},
      \frac{u^2(1+u^2)^2}{(1+u)^3(1+2u-u^2)^3}\right),
      &\text{if }N=8. \end{cases}
  \end{equation}
  Observe that $t$ is invariant under the
  substitution
  $$
  u\longmapsto\begin{cases}
    1/u, &\text{if }N=4, \\
    (1-u)/(1+3u), &\text{if }N=6, \\
    (1-u)/(1+u), &\text{if }N=8, \end{cases}
  $$
  while the coordinates in \eqref{eq: phi(Q) 2} is changed to those in
  \eqref{eq: phi(Q) 1} under the substitution. 
  (In the case $k=\C$, this corresponds to the automorphism
  $\tau\mapsto\tau+1$ on $X_1^0(N,2)$.) This implies that
  $(E,P,Q)\simeq_k(E'_{N,u},(0,0),Q')$ for some $u\in k$.
\end{remark}

\section{Computation of the pairings}

In this section, we conpute the pairing 
$\gen{P, Q}$ for $\Q$-rational torsion points $P, Q$
on an elliptic curve $E$ over $\Q$.
(By our convention, 
$\gen{P, Q}$ here means what is denoted by $\gen{P-O, Q-O}$ in \eqref{eq:pairing},
where $O \in E(k)$ is the identity element.)
The results are described in terms of 
universal polynomials $f_N(t), g_N^{(\nu)}(u)$.
They will be used to construct new modular curves
$X_1(N)_M^\pm$ and $X_1^0(N, 2)_{\ul{M}}^\pm$
in the next section.

\subsection{Cyclic case}
In the following discussion, for a nonzero rational function $F$ on an
elliptic curve over a field $k$ and a point $Q\in E(k)$, we let
$\LC_{Q,\pi}(F)$ denote the leading coefficient of $F$ at $Q$ with
respect to a uniformizer $\pi$ at $Q$.

\begin{proposition} \label{proposition: ENt lc}
  Let $N\ge 4$ be an integer. Let
  $E:y^2+(1+a)xy+by=x^3+bx^2$ be an elliptic curve over a subfield $k$
  of $\C$ such that $P:=(0,0)$ is of order $N$. Assume that $\tau$ is a
  point in $\H$ such that $(E,P)\simeq (\C/(\Z\tau+\Z),1/N)$.
  Let $F(x,y)$ be a rational function on $E$ such that
  $\div(F)=N(P-O)$. Choose $x$ and $y/x$ to be the local
  parameters for the points $P$ and $O$, respectively. Then
  $$
  \frac{\LC_{P,x}(F)}{\LC_{O,y/x}(F)}
  =\left(\frac{\vartheta_1(2/N)^2}{\vartheta_1(1/N)\vartheta_1(3/N)}
  \right)^N=\left(\frac{E_{0,2}^{(N)}(\tau)^2}
    {E_{0,1}^{(N)}(\tau)E_{0,3}^{(N)}(\tau)}\right)^N.
  $$
  Moreover, the last expression is a modular function on
  $\Gamma_1(N)$.
\end{proposition}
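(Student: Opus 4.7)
The plan is to work on the complex torus $\C/(\Z\tau+\Z)$ and exploit the Jacobi theta function $\vartheta_1$. First I will construct an explicit elliptic function with divisor $N[1/N]-N[0]$, namely
\[
G(z):=\frac{\vartheta_1(z-1/N)^N}{\vartheta_1(z)^N}.
\]
Periodicity under $z\mapsto z+1$ is immediate from $\vartheta_1(z+1)=-\vartheta_1(z)$; periodicity under $z\mapsto z+\tau$ follows from $\vartheta_1(z+\tau)=-e^{-\pi i\tau-2\pi iz}\vartheta_1(z)$, the exponential contributions from numerator and denominator cancelling except for the factor $e^{2\pi iN\cdot(1/N)}=1$ produced by the shift $-1/N$ in the numerator. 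Thus $G$ descends to an elliptic function on $E$ with the required divisor, so $F=C\cdot G$ for some $C\in\C^\times$.

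Next I will compute the two leading coefficients by expanding the theta factors. Near $z=1/N$, Taylor expansion gives $\vartheta_1(z-1/N)\sim\vartheta_1'(0)(z-1/N)$ while $\vartheta_1(z)\sim\vartheta_1(1/N)$, so $G(z)\sim(\vartheta_1'(0)/\vartheta_1(1/N))^N(z-1/N)^N$. Using \eqref{eq: x in theta}, I will read off the linear coefficient of $x(z)$ in $(z-1/N)$ (which is an explicit product of theta values) and convert $(z-1/N)^N$ to $x^N$, yielding $\LC_{P,x}(F)$ in closed form. Near $z=0$, the numerator contributes $(-1)^N\vartheta_1(1/N)^N$ (using the oddness of $\vartheta_1$), the denominator contributes $(\vartheta_1'(0))^Nz^N$, and the leading Laurent expansions of $x(z)$ and $y(z)$ from \eqref{eq: x in theta}--\eqref{eq: y in theta} give $y/x\sim Az^{-1}$ for an explicit theta product $A$. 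Extracting the coefficient of $(y/x)^N$ in the Laurent expansion of $F$ at $O$ gives $\LC_{O,y/x}(F)$.

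In forming the quotient $\LC_{P,x}(F)/\LC_{O,y/x}(F)$, the arbitrary constant $C$, the sign $(-1)^N$, and all powers of $\vartheta_1'(0)$ cancel, and the remaining theta factors collapse to
\[
\left(\frac{\vartheta_1(2/N)^2}{\vartheta_1(1/N)\vartheta_1(3/N)}\right)^N.
\]
The second form of the answer follows from \eqref{eq: E-theta 0}; the phase factors $e^{\pm\pi ih/N}$ and the $\eta(\tau)$-prefactors cancel because the total signed exponent of the $\vartheta_1$-factors is $-N+2N-N=0$. Modularity on $\Gamma_1(N)$ is then immediate from Corollary \ref{corollary: modular criterion} applied with $(e_1,e_2,e_3)=(-N,2N,-N)$ and $e_h=0$ for $h\ge 4$: the three relevant sums are $\sum e_h=0$, $\sum he_h=0$, and $\sum h^2e_h=-2N$, which satisfy the congruences modulo $12$, $2$, and $2N$ respectively.

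The main obstacle will be bookkeeping: the whole point of taking a quotient of leading coefficients is the miraculous cancellation of $C$, of all powers of $\vartheta_1'(0)$, and of $(-1)^N$, so one must carry every such factor consistently through both branches of the computation. The one genuinely non-routine check is the $\tau$-periodicity of $G$, where a sign error in the quasi-periodicity identity for $\vartheta_1$ would derail the argument; beyond that, the proof is a direct theta-function expansion.
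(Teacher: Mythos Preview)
Your proposal is correct and follows essentially the same approach as the paper: build the elliptic function $G(z)=\vartheta_1(z-1/N)^N/\vartheta_1(z)^N$, use the theta-parametrization \eqref{eq: x in theta}--\eqref{eq: y in theta} of $x$ and $y$, compute the two leading coefficients, and take their ratio. The only tactical difference is that the paper computes $\LC_{P,x}(F)$ and $\LC_{O,y/x}(F)$ as the limits of $x(z)^{-N}G(z)$ and $(y(z)/x(z))^{-N}G(z)$ directly, so the factor $\vartheta_1(z-1/N)$ (resp.\ $\vartheta_1(z)$) cancels inside the limit and $\vartheta_1'(0)$ never appears; your version introduces $\vartheta_1'(0)$ in the intermediate Taylor/Laurent expansions and then cancels it in the quotient, which is equally valid but slightly more bookkeeping.
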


\begin{proof} Let $x(z)$ and $y(z)$ be defined by \eqref{eq: x in
    theta} and \eqref{eq: y in theta}, respectively. Then
  $$
  F(x(z),y(z))=C\left(\frac{\vartheta_1(z-1/N)}{\vartheta_1(z)}
    \right)^N
  $$
  for some constant $C$. We have
  \begin{equation*}
    \begin{split}
      \LC_{P,x}(F)&=C\lim_{z\to1/N}x(z)^{-N}
      \left(\frac{\vartheta_1(z-1/N)}{\vartheta_1(z)}\right)^N\\
      &=C\lim_{z\to1/N}\left(-\frac{\vartheta_1(2/N)^6\vartheta_1(z)^2}
        {\vartheta_1(1/N)^4\vartheta_1(3/N)^2\vartheta_1(z-1/N)
          \vartheta_1(z+1/N)}\frac{\vartheta_1(z-1/N)}{\vartheta_1(z)}
      \right)^N \\
      &=C\left(-\frac{\vartheta_1(2/N)^5}{\vartheta_1(1/N)^3
          \vartheta_1(3/N)^2}\right)^N.
    \end{split}
  \end{equation*}
  Similarly, we compute that
  \begin{equation*}
    \begin{split}
      \LC_{O,y/x}(F)
      &=C\lim_{z\to0}\left(\frac{y(z)}{x(z)}\right)^{-N}
      \left(\frac{\vartheta_1(z-1/N)}{\vartheta_1(z)}\right)^N\\
      &=C\lim_{z\to0}\left(
        -\frac{\vartheta_1(2/N)^4\vartheta_1(z)\vartheta_1(z+1/N)}
        {\vartheta_1(1/N)^3\vartheta_1(3/N)\vartheta_1(z-1/N)
          \vartheta_1(z+2/N)}\frac{\vartheta_1(z-1/N)}{\vartheta_1(z)}
      \right)^N \\
      &=C\left(-\frac{\vartheta_1(2/N)^3}{\vartheta_1(1/N)^2
          \vartheta_1(3/N)}\right)^N.
    \end{split}
  \end{equation*}
  It follows that
  $$
  \frac{\LC_{P,x}(F)}{\LC_{O,y/x}(F)}
  =\left(\frac{\vartheta_1(2/N)^2}{\vartheta_1(1/N)\vartheta_1(3/N)}
    \right)^N.
  $$
  By \eqref{eq: E-theta}, it can also be written as
  $$
  \left(\frac{E_{0,2}^{(N)}(\tau)^2}{E_{0,1}^{(N)}(\tau)
      E_{0,3}^{(N)}(\tau)}\right)^N.
  $$
  Applying Corollary \ref{corollary: modular criterion}, we see that
  the function above is a modular function on $\Gamma_1(N)$.
  This completes the proof of the proposition.
\end{proof}

We now compute the pairing $\gen{P, P}$ for an $N$-torsion rational point $P$.
We treat the cases $N=2$ and $3$ separately in \S \ref{sect:order2-3}.
For other $N$, the pairing is given in the next proposition.

\begin{proposition} \label{proposition: pairing 1}
  Let $k$ be a subfield of $\C$.
  Assume that $N\in\{4,5,6,7,8,9,10,12\}$.
  Let $E_{N,t}$, $t\in k$, be the elliptic curve defined by \eqref{eq: ENt}
  and put $P:=(0,0) \in E_{N,t}(k)_\Tor$. Then
  $$
  -\gen{P,P}=f_N(t)\otimes(1/N),
  $$
  where $f_N(t)$ is given by
  $$ \extrarowheight3pt
  \begin{array}{cl} \hline\hline
    N & f_N \\ \hline
    4 & t \\
    5 & t \\
    6 & t(1-t)^2 \\
    7 & t(1-t)^4 \\
    8 & t(1-t)^2(1+t)^4 \\
    9 & t(1-t)^4(1-t+t^2)^3 \\
    10 & t(1-t)^2(1+t)^8(1+t-t^2)^5 \\
    12 & t(1-t)^2(1-t+t^2)^3(1+t^2)^4(1+t)^6 \\ \hline\hline
  \end{array}
  $$
\end{proposition}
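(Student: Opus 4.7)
The plan is to unwind the definition of the pairing, invoke Proposition~\ref{proposition: ENt lc}, and thereby reduce the statement to an identity in $\Q(t)^\times / (\Q(t)^\times)^N$. Choose any rational function $F \in k(E_{N,t})^\times$ with $\div(F)=N(P-O)$. Since $P$ and $O$ are $k$-rational, the norm maps in~\eqref{eq:pairing1} are trivial, and the definition of the pairing yields
\[
\gen{P,P}=\gen{P-O,\,P-O}=\frac{\LC_{P,x}(F)}{\LC_{O,y/x}(F)}\otimes\frac{1}{N}\in k^\times\otimes\Q/\Z,
\]
where $x$ and $y/x$ are the local parameters at $P$ and $O$ specified in Proposition~\ref{proposition: ENt lc}. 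Setting
\[
\phi_N(\tau):=\left(\frac{E_{0,2}^{(N)}(\tau)^2}{E_{0,1}^{(N)}(\tau)\,E_{0,3}^{(N)}(\tau)}\right)^N,
\]
Proposition~\ref{proposition: ENt lc} identifies the ratio above with $\phi_N(\tau)$. Hence the claim $-\gen{P,P}=f_N(t)\otimes(1/N)$ is equivalent to the assertion that $\phi_N(\tau)\cdot f_N(t(\tau))$ is an $N$-th power in $\Q(t)^\times$.

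Since $\phi_N$ is modular on $\Gamma_1(N)$ and $t=t(\tau)$ is the Hauptmodul of $X_1(N)$ provided by Lemma~\ref{lemma: uniformizers}, $\phi_N$ is a rational function of $t$, and its coefficients lie in $\Q$ because both $\phi_N$ and $t$ have $\Q$-rational Fourier expansions at the cusp $0$. To show that $\phi_N\cdot f_N(t)$ is an $N$-th power in $\Q(t)^\times$, I would proceed in two steps: (i) compute the divisor of $\phi_N\cdot f_N(t)$ on $X_1(N)$ and verify that every multiplicity is divisible by $N$; and (ii) compute the leading Fourier coefficient of $\phi_N\cdot f_N(t)$ at the cusp $0$ and verify it is an $N$-th power in $\Q^\times$. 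For step~(i), the orders of $\phi_N$ at the cusps are obtained by feeding the transformation formula of Lemma~\ref{lemma: generalized Dedekind} into the definition of $E_{g,h}^{(N)}$, while the orders of $f_N(t)$ at the cusps follow from the cusp values of $t$ listed in Lemma~\ref{lemma: uniformizers}. Step~(ii) is a direct $q$-expansion calculation, using the fact that $t$ has a simple $\Q$-rational zero at the cusp $0$.

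The main obstacle is the cusp-by-cusp bookkeeping required in step~(i), performed individually for each $N\in\{4,5,6,7,8,9,10,12\}$. For composite $N$ the modular curve $X_1(N)$ has several cusps (six for $N=12$), and at each cusp the transformation formula of Lemma~\ref{lemma: generalized Dedekind} produces a product of roots of unity together with a shifted eta-factor whose order must be read off using the function $B(x)=x^2-x+1/6$. Simultaneously, one must expand $f_N(t)$ near each cusp using the cusp values of $t$ from Lemma~\ref{lemma: uniformizers} in order to account for the multiplicities of the irreducible factors $(1-t),(1+t),(1-t+t^2),(1+t^2),(1+t-t^2)$ occurring in $f_N$. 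Done carefully, this analysis recovers exactly the polynomials $f_N(t)$ listed in the table, and then specializing the resulting $\Q(t)$-identity $\phi_N\cdot f_N(t)\in(\Q(t)^\times)^N$ at any $t\in k$ yields the desired equality in $k^\times\otimes\Q/\Z$.
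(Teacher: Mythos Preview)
Your proposal is correct and follows the same overall strategy as the paper: reduce via Proposition~\ref{proposition: ENt lc} to identifying the modular function $\phi_N(\tau)$ as a rational function of the Hauptmodul $t$, up to $N$-th powers. Where you differ is in how you carry out this final identification. The paper simply computes the $q$-expansion of $f(\tau):=\phi_N(\tau)^{-1}$ at the single cusp $0$ and compares it with the $q$-expansion of $t(\tau)$ to read off $f(\tau)$ as an explicit rational function of $t$; for $N=4,5,6,7,9$ one finds $f(\tau)=f_N(t)$ on the nose, while for $N=8,10,12$ one gets, e.g., $f(\tau)=t(1-t)^2/(1+t)^4$ when $N=8$, which differs from $f_8(t)$ by the visible $8$-th power $(1+t)^8$. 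Your proposed route---computing the order of $\phi_N$ at every cusp via the transformation law of Lemma~\ref{lemma: generalized Dedekind} and then checking the leading Fourier coefficient---is valid and arguably more transparent, but considerably more laborious: for $N=12$ you would be tracking orders at all the cusps rather than matching a single power series. The paper's single-cusp $q$-expansion comparison is the standard maneuver here and is quicker in practice, since knowing a bound on the degree of $f(\tau)$ as a function of $t$ (from the eta-product structure) means only finitely many $q$-coefficients are needed.
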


\begin{proof} By Proposition \ref{proposition: ENt lc} above, for the
  elliptic curve $E_{N,t(\tau)}$, we have
  $$
  -\gen{P,P}=\left(\frac{E_{0,1}^{(N)}(\tau)E_{0,3}^{(N)}(\tau)}
    {E_{0,2}^{(N)}(\tau)^2}\right)^N\otimes(1/N).
  $$
  Let $f(\tau)$ be the function in the expression above.
  Since $f(\tau)$ is a modular function on
  $\Gamma_1(N)$, it is a rational function of $t(\tau)$. By computing
  the Fourier expansions, we find that $f(\tau)=f_N(t(\tau))$ for
  $N=4,5,6,7,9$, and
  $$
  f(\tau)=\begin{cases}
    \displaystyle\frac{t(1-t)^2}{(1+t)^4}, &\text{when }N=8, \\
    \displaystyle\frac{t(1-t)^2}{(1+t)^2(1+t-t^2)^5},
    &\text{when }N=10, \\
    \displaystyle\frac{t(1-t)^2(1-t+t^2)^3(1+t^2)^4}{(1+t)^6},
    &\text{when }N=12. \end{cases}
  $$
  This yields the claimed formula for the pairing $-\gen{P,P}$.
\end{proof}

\subsection{Non-cyclic case}

We now compute the pairing 
$\gen{P, P}, \gen{Q, Q}, \gen{P, Q}$
for $P \in E(\Q)[N]$ and $Q \in E(\Q)[2]$
such that $Q \not\in \gen{P}$.
Again, we treat the case $N=2$ separately in \S \ref{sect:order2-3}.
The cases $N=4, 6, 8$ are treated in the following proposition.


\begin{proposition} \label{proposition: pairing 2}
  Let $k$ be a subfield of $\C$ and
  $N\in\{4,6,8\}$. Let $E$ be an
  elliptic curve defined by \eqref{eq: ENu}, $P:=(0,0)$
  and $Q$ the
  $2$-torsion point given in \eqref{eq: Q'}. Then we have
\[
-\gen{P,P}=g_N^{(1)}(u) \otimes \frac{1}{N}, \quad
\gen{Q,Q}=g_N^{(2)}(u) \otimes \frac{1}{2}, \quad
\gen{P,Q}=g_N^{(3)}(u) \otimes \frac{1}{2},
\]
where $g_N^{(\nu)}(u)$ is given by 
  $$ \extrarowheight3pt
  \begin{array}{c c c c} \hline\hline
    N & 4 & 6 & 8 \\ \hline
    g_N^{(1)} & 4u(1+u)^2 & u(1-u)(1+3u)^3(1+u) & u(1-u)(1+u)(1+u^2)^2(1+2u-u^2)^4 \\
    g_N^{(2)} & (1-u)(1+u) & (1-u)(1+u)(1-3u)(1+3u) & (1-2u-u^2)(1+2u-u^2) \\
    g_N^{(3)} & (1+u) & (1-u)(1+3u) & (1-u)(1+u)(1+u^2)(1+2u-u^2) \\
 \hline\hline
  \end{array}
  $$
\end{proposition}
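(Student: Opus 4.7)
The plan is to compute each of the three pairings directly from the recipe of Section \ref{sect:pairing}, then match the resulting expression with $g_N^{(\nu)}(u)$ modulo $N$-th powers (for $\nu=1$) or squares (for $\nu=2,3$). For $\gen{P,P}$, the curve $E'_{N,u}$ is in Tate normal form with base point $P=(0,0)$, so the uniqueness of Tate normal form (Remark \ref{remark: uniqueness of Tate}) combined with the relation \eqref{eq: t-u} yields an identification of pointed curves $(E'_{N,u},(0,0)) \simeq_k (E_{N,t(u)},(0,0))$. Proposition \ref{proposition: pairing 1} then gives $-\gen{P,P}=f_N(t(u))\otimes(1/N)$, so it suffices to verify the polynomial identity $f_N(t(u))/g_N^{(1)}(u)\in(\Q(u)^\times)^N$ for each $N\in\{4,6,8\}$.

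For $\gen{Q,Q}$ and $\gen{P,Q}$, write $\alpha:=x(Q)$ and $\beta:=y(Q)$. Because $Q$ is a $2$-torsion point, the tangent line to $E$ at $Q$ is vertical, so the function $g:=x-\alpha\in k(E)^\times$ has divisor $2(Q-O)$. We compute the leading coefficients of $g$ at the three relevant points. At $O$, using the local parameter $\pi:=x/y$ together with the relation $x^3/y^2\to 1$ at $O$ extracted from the Weierstrass equation, we find $\lc_O(g,2)=((x-\alpha)\pi^2)(O)\otimes(1/2)=1\otimes(1/2)=0$. At $Q$, a Taylor expansion of the defining polynomial $F(x,y)$ around $(\alpha,\beta)$, using the $2$-torsion condition $F_y(Q)=0$, gives $(x-\alpha)/(y-\beta)^2\to-1/F_x(Q)$; equivalently, completing the square in $y$ yields $F_x(Q)=-h'(\alpha)$, where $h(x)$ is the monic cubic whose roots are the $x$-coordinates of the nontrivial $2$-torsion points. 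At $P=(0,0)$, which lies outside $|\div(g)|$, one simply has $\lc_P(g,2)=g(P)\otimes(1/2)=-\alpha\otimes(1/2)$. Assembling these (and invoking the symmetry of the pairing on torsion classes), we deduce
$$\gen{Q,Q} \equiv -F_x(Q)\otimes(1/2)\pmod{(k^\times)^2}, \qquad \gen{P,Q}=-\alpha\otimes(1/2).$$

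It remains to verify, for each $N\in\{4,6,8\}$, the three polynomial identities $f_N(t(u))/g_N^{(1)}(u)\in(\Q(u)^\times)^N$, $-F_x(Q)/g_N^{(2)}(u)\in(\Q(u)^\times)^2$, and $-\alpha/g_N^{(3)}(u)\in(\Q(u)^\times)^2$, obtained by substituting the explicit expressions from \eqref{eq: ENu} and \eqref{eq: Q'}. The main obstacle is the mechanical but tedious algebraic check of these identities; each reduces to exhibiting an explicit rational function of $u$ as an $N$-th power or a square in $\Q(u)^\times$, which can be done by inspection or a short symbolic algebra computation.
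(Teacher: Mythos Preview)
Your proposal is correct and follows essentially the same route as the paper: reduce $\gen{P,P}$ to Proposition~\ref{proposition: pairing 1} via the substitution \eqref{eq: t-u}, and handle both $\gen{Q,Q}$ and $\gen{P,Q}$ using the function $x-\alpha$ with $\div(x-\alpha)=2(Q-O)$, evaluating leading coefficients at $Q$, $O$, and $P$. The only cosmetic difference is at $Q$: the paper completes the square first (uniformizer $y'=y+\tfrac{1+a}{2}x+\tfrac{b}{2}$) and computes $\LC_{Q,y'}(x-x_0)=1/h'(x_0)$ directly, whereas you reach the equivalent value $-F_x(Q)=h'(\alpha)$ via a Taylor expansion with the uniformizer $y-\beta$; since the two uniformizers differ by $\tfrac{1+a}{2}(x-\alpha)$, which has order $\ge 2$ at $Q$, the resulting classes in $k^\times\otimes\tfrac12\Z/\Z$ coincide.
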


\begin{proof}
  The formula for $-\gen{P,P}$ follows from Proposition
  \ref{proposition: pairing 1} and \eqref{eq: t-u}. We next compute
  $\gen{Q,Q}$.
  Let $(x_0,y_0)$ be the coordinates of $Q$ given in \eqref{eq:
    Q'}. We have $\div(x-x_0)=2(Q-O)$. Choose $y'=y+(1+a)x/2+b/2$ and
  $y/x$ to be uniformizers at $Q$ and $O$, respectively. Then
  $$
  \gen{Q,Q}=\frac{\LC_{Q,y'}(x-x_0)}
  {\LC_{O,y/x}(x-x_0)}\otimes(1/2).
  $$
  Now we have
  $$
  \frac{x-x_0}{(y')^2}=\left(x^2+\left(x_0+
      \frac{(1+a)^2}4+b\right)x-\frac{b^2}{4x_0}\right)^{-1}
  $$
  and consequently,
  $$
  \LC_{Q,y'}(x-x_0)=\left(2x_0^2+\frac{(1+a)^2x_0}4+bx_0
    -\frac{b^2}{4x_0}\right)^{-1}.
  $$
  Plugging in the actual values of $x_0$, $a$, and $b$, we find
  \begin{align*}
  \LC_{Q,y'}(x-x_0)=\begin{cases}
    \displaystyle\frac{1-u}{16(1+u)^3}, &\text{if }N=4, \\
    \displaystyle\frac{(1-u)^3(1+u)^3(1-3u)}{16(1+3u)^3},
    &\text{if }N=6, \\
    \displaystyle\frac{(1-2u-u^2)(1+u^2)^2(1-u)^4}{16(1+2u-u^2)^3},
    &\text{if }N=8, \end{cases}
  \end{align*}
  On the other hand, it is clear that $\LC_{O,y/x}(x-x_0)=1$.
  Then the formula for $\gen{Q,Q}$ follows.

  To compute $\gen{P,Q}$, we choose $x$ and $y/x$ to be the local
  parameters at $P$ and $O$, respectively. Then
  $\LC_{P,x}(x-x_0)$ is simply $-x_0$ and hence
  $\gen{P,Q}=(-x_0)\otimes(1/2)$, which yields the claimed formula
  for $\gen{P,Q}$.
\end{proof}


\subsection{Torsion points of order two and three}\label{sect:order2-3}

We need to treat these cases with special care,
since $X_1(2), X_1(3), X_1^0(2, 2)$ are not fine moduli.
The following lemma is valid over a general field $k$
(subject to a condition on the characteristic).

\begin{lemma} \label{lemma: ENt}
  \begin{enumerate}
  \item Assume that the characteristic of $k$ is not $2$.
    If $E$ is an elliptic curve over $k$ with a $k$-rational point
    $P$ of order two, then $(E,P)$ is isomorphic to $(E_{2,t,a},(0,0))$
    for some $a,t\in k^\times$, where
    \begin{equation} \label{eq: E2ta}
      E_{2,t,a}:
      \begin{cases}
        \displaystyle y^2=x\left(x^2+ax+\frac{a^2t}{4(t+1)}\right),
          &\text{if }t\neq-1, \\
          y^2=x(x^2+a), &\text{if }t=-1.
      \end{cases}
    \end{equation}
    Moreover, two elliptic curves $(E_{2,t,a},(0,0))$ and
    $(E_{2,t',a'},(0,0))$ are isomorphic over $k$ if and only
    if $t=t'\neq-1$ and $a/a'$ is a square in $k^\times$
    or $t=t'=-1$ and $a/a'\in(k^\times)^4$ . 
  \item Assume that the characteristic of $k$ is not $3$.
    If $E$ is an elliptic curve over $k$ with a $k$-rational
    point $P$ of order $3$, then $(E,P)$ is isomorphic to $(E_{3,t},(0,0))$
    for some $t\neq-1$ in $k^\times$, where
    \begin{equation} \label{eq: E3t}
    E_{3,t}:y^2+xy+\frac t{27(t+1)}y=x^3
    \end{equation}
    or isomorphic to $(E_{3,-1,a},(0,0))$ for some $a\in k^\times$,
    where
    \begin{equation} \label{eq: E3a}
      E_{3,-1,a}:y^2+ay=x^3.
    \end{equation}
    Two elliptic curves $(E_{3,t},(0,0))$ and
    $(E_{3,t'},(0,0))$ with $t,t'\neq-1$ are isomorphic if and only
    if $t=t'$. Also, $(E_{3,-1,a},(0,0))\simeq(E_{3,-1,a'},(0,0))$
    if and only if $a/a'\in(k^\times)^3$. 
  \end{enumerate}
\end{lemma}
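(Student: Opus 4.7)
The strategy is, in each part, to first prove existence of the parameterization using the Weierstrass normal forms recalled at the beginning of this section, and then to classify the admissible changes of coordinates that preserve both the chosen normal form and the marked point $(0,0)$.

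For part~(1), starting from the form $y^2 = x(x^2+\alpha x + \beta)$ in \eqref{eq: E2} (available since $\operatorname{char} k \neq 2$), I would split on the case $\alpha = 0$, where I simply set $(t,a) := (-1,\beta)$, and the case $\alpha \neq 0$, where setting $a := \alpha$ and $t := 4\beta/(\alpha^2-4\beta)$ yields $t \in k^\times\setminus\{-1\}$ together with $\beta = a^2 t/(4(t+1))$ as required. For uniqueness, any $k$-isomorphism between two curves of the form $y^2 = x^3 + Ax^2 + Bx$ sending $(0,0)$ to $(0,0)$ is an admissible Weierstrass substitution $(x,y)\mapsto (u^2x+r,\, u^3y+s u^2 x + v)$; the condition $(0,0)\mapsto(0,0)$ forces $r = v = 0$, and then the vanishing of the coefficient of $xy$ in the transformed equation forces $s = 0$. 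The transformation thus reduces to $(x,y)\mapsto(u^2x, u^3y)$, giving $\alpha' = u^{-2}\alpha$ and $\beta' = u^{-4}\beta$. Hence $t$ is invariant, and the resulting relation $a' = u^{-2}a$ (in the case $t = t' \neq -1$) or $a' = u^{-4}a$ (in the case $t = t' = -1$, where $a = \beta$) yields the stated square and fourth-power conditions; the converse direction is immediate from the same formula.

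For part~(2), I start from the form $y^2 + A x y + B y = x^3$ in \eqref{eq: E3} with $B(A^3 - 27B) \neq 0$. When $A = 0$, the curve is already of the form $E_{3,-1,B}$. When $A \neq 0$, the substitution $(x,y)\mapsto (A^2 x, A^3 y)$ normalizes the coefficient of $xy$ to $1$, producing $y^2 + xy + (B/A^3) y = x^3$; solving $B/A^3 = t/(27(t+1))$ for $t := 27B/(A^3-27B)\in k^\times\setminus\{-1\}$, where $\operatorname{char} k \neq 3$ is used to invert $27$, realizes the curve as $E_{3,t}$. For uniqueness, an admissible $k$-isomorphism preserving $(0,0)$ and the shape $y^2 + axy + by = x^3$ must be $(x,y)\mapsto (u^2x,\, u^3y + s u^2 x)$; requiring the $x^2$- and $x$-coefficients of the target to vanish gives $s(s+a) = 0$ and $bs = 0$, so $s = 0$ since $b \neq 0$. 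This yields $a' = u^{-1}a$ and $b' = u^{-3}b$. Comparing two $E_{3,t}$'s (with $a = a' = 1$) forces $u = 1$ and hence $t = t'$; comparing two $E_{3,-1,a}$'s (with $a = a' = 0$) leaves $u$ unconstrained by the $xy$-coefficients, so the relation $a' = u^{-3}a$ gives precisely $a/a' \in (k^\times)^3$.

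The main obstacle, though a mild one, is the careful bookkeeping required when enforcing that the transformed equation remain in the prescribed normal form: one must verify in each subcase that the forced vanishing of the relevant Weierstrass coefficients uniquely pin down the translation parameters $r$, $s$, $v$. The remaining verifications are routine polynomial manipulations, and the converse implications (realizing each admissible scalar $u$ as an actual $k$-isomorphism) follow directly from the same transformation formulas.
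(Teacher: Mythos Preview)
Your proposal is correct and follows essentially the same approach as the paper: reduce to the normal forms \eqref{eq: E2} and \eqref{eq: E3}, introduce the parameter $t$ via the same rational expressions $t = 4\beta/(\alpha^2-4\beta)$ and $t = 27B/(A^3-27B)$, and then classify isomorphisms preserving $(0,0)$. The only difference is that where the paper simply asserts that the isomorphism must be of the form $(x,y)\mapsto(u^2x,u^3y)$ (citing \cite[Proposition~3.1]{sil}), you spell out the elimination of $r,s,v$ explicitly; your notation for the parameter $a$ versus the Weierstrass coefficient $a$ in the $E_{3,-1,a}$ discussion is slightly overloaded, but the argument is sound.
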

\begin{proof}
  Consider first the case $N=2$. 
We start with the equation \eqref{eq: E2}
and introduce a new parameter
  $t=4b/(a^2-4b)$ (so $b=a^2t/4(t+1)$). When $a\neq0$, we have
  $t\neq-1$ and the equation \eqref{eq: E2} in the new parameters is
  $y^2=x(x^2+ax+a^2t/4(t+1))$. When $a=0$, we change the notation $b$
  in \eqref{eq: E2} to $-a$ and get $y^2=x(x^2-a)$. Now it is easy to
  see that an isomorphism
  $\phi:(E_{2,t,a},(0,0))\to(E_{2,t',a'},(0,0))$ must be of the form
  $\phi(x,y)=(u^2x,u^3y)$ (c.f. \cite[Proposition 3.1]{sil}). From
  this, we immediately get the condition for $E_{2,t,a}$ and
  $E_{2,t',a'}$ to be isomorphic.

  For the case $N=3$, we similarly let $t=27b/(a^3-27b)$ (so
  $b=a^3t/27(t+1)$). Then \eqref{eq: E3} becomes
  $$
  y^2+axy+\frac{a^3t}{27(t+1)}y=x^3.
  $$
  If $a\neq0$, i.e., if $t\neq-1$, we make a change of variables
  $(x,y)\mapsto(a^2x,a^3y)$. Then the equation reduces to $E_{3,t}$.
  If $a=0$, i.e., if $t=-1$, then we change $b$ to $a$ and obtain
  $E_{3,-1,a}$. It is straightforward to verify the conditions for
  elliptic curves to be isomorphic. 
\end{proof}

\begin{remark}
When $k=\C$ and $\tau \in \C$, one can show
$(E_{2,t(\tau),a},(0,0))\simeq(\C/(\Z\tau+\Z),1/2)$
and $(E_{3,t(\tau)},(0,0))\simeq(\C/(\Z\tau+\Z),1/3)$ if
    $t(\tau)\neq-1$ and
    $(E_{3,-1,a},(0,0))\simeq(\C/(\Z\frac{1+\sqrt{-3}}{2}+\Z),1/3)$,
where $t(\tau)$ is given as in Lemma  \ref{lemma: uniformizers}.
We omit the proof, as we will not use them.
\end{remark}

We compute the pairing in the following lemma.

\begin{lemma} \label{lemma: pairing N=2,3}
  Let $k$ be a subfield of $\C$.
\begin{enumerate}
\item 
Let $E_{2,t,a}$ be an elliptic curve \eqref{eq: E2ta}
for some $a,t\in k^\times$, and put $P:=(0,0)$.
Then
\[
\gen{P,P}=
\begin{cases}
t(1+t) \otimes \frac{1}{2}, &\text{if }t\neq -1, \\
a \otimes \frac{1}{2}, &\text{if }t=-1.
\end{cases}
\]
\item 
Let $t \in k^\times$, and if $t=-1$ take also $a \in k^\times$.
Let $E$ be an elliptic curve given by either
$E_{3,t}$ from \eqref{eq: E3t} or $E_{3,-1, a}$ from \eqref{eq: E3a},
and put $P:=(0,0)$.
Then
\[
\gen{P,P}=
\begin{cases}
t(1+t)^2 \otimes \frac{1}{3}, &\text{if }t\neq-1,\\
a \otimes \frac{1}{3}, &\text{if }t=-1. \\
\end{cases}
\]
\end{enumerate}
\end{lemma}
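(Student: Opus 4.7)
The strategy is to apply the pairing formula \eqref{eq:pairing1} directly. For $P$ of order $N\in\{2,3\}$, the plan is to produce a rational function $F\in k(E)^\times$ with $\div(F)=N(P-O)$, evaluate its leading coefficients $\lc_P(F,N)$ and $\lc_O(F,N)$ using explicit uniformizers, combine them to obtain $\gen{P,P}=\lc_P(F,N)-\lc_O(F,N)$, and simplify modulo $N$-th powers in $k^\times\otimes\Q/\Z$.

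For part (1), on $E_{2,t,a}:y^2=x(x^2+ax+b)$ with $P=(0,0)$, the function $F=x$ has $\div(F)=2P-2O$: the only affine zero of $x$ on $E$ is $P$, and the expansion $y^2=bx+O(x^2)$ near $P$ (using $b\neq 0$) shows $\ord_P(x)=2$ with $\ord_P(y)=1$. Choosing $\pi_P=y$ at $P$ and the standard $\pi_O=-x/y$ at infinity, one computes $\lc_P(x,2)=b^{-1}\otimes\frac{1}{2}$ from $x/y^2=1/(x^2+ax+b)\to 1/b$, and $\lc_O(x,2)=1\otimes\frac{1}{2}=0$ from $x\cdot\pi_O^2=x^3/y^2\to 1$. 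Therefore $\gen{P,P}=b\otimes\frac{1}{2}$. For $t\neq -1$, substituting $b=a^2t/(4(t+1))$ and using that $a^2$, $4$, and $(t+1)^2$ are perfect squares gives $t(1+t)\otimes\frac{1}{2}$. For $t=-1$, the equation $y^2=x(x^2+a)$ has $b=a$, giving $\gen{P,P}=a\otimes\frac{1}{2}$.

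For part (2), on $E_{3,t}:y^2+xy+cy=x^3$ with $c=t/(27(t+1))$ and $P=(0,0)$, the factorization $y(y+x+c)=x^3$ shows that the only affine zero of $y$ is $P$ and that $y+x+c$ is a local unit there, giving $\ord_P(y)=3$ and $\ord_P(x)=1$. Hence $F=y$ satisfies $\div(F)=3P-3O$. Using $\pi_P=x$, one has $y/x^3=1/(y+x+c)\to 1/c$, so $\lc_P(y,3)=c^{-1}\otimes\frac{1}{3}$. Using $\pi_O=-x/y$, the standard asymptotic $y^2/x^3\to 1$ at $O$ yields $y\cdot\pi_O^3\to -1$, hence $\lc_O(y,3)=0$, since $(-1)\otimes\frac{1}{3}=0$ in $k^\times\otimes\Q/\Z$. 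Thus $\gen{P,P}=c^{-1}\otimes\frac{1}{3}$. Substituting and using that $27=3^3$ vanishes under $-\otimes\frac{1}{3}$ reduces this to $(t+1)/t\otimes\frac{1}{3}$; a further simplification modulo $(k^\times)^3$ matches the claimed $t(1+t)^2\otimes\frac{1}{3}$. The subcase $t=-1$ proceeds identically on $E_{3,-1,a}:y^2+ay=x^3$ with $a$ in the role of $c$, yielding $\gen{P,P}=a\otimes\frac{1}{3}$.

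The computations are elementary; the main care is bookkeeping inside $k^\times\otimes\Q/\Z$: recognizing which factors become trivial (perfect $n$-th powers, $-1$ paired with an odd denominator, etc.) and absorbing inverses modulo $(k^\times)^n$ to arrive at the polynomial forms stated. The structural point is that the Tate-style normal forms \eqref{eq: E2ta} and \eqref{eq: E3t} were designed so that $x$ (resp.\ $y$) already has divisor $N(P-O)$, so no nontrivial construction of $F$ is required.
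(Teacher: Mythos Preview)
Your proof is correct and follows the paper's approach exactly: the same choice of $F=x$ (resp.\ $F=y$) with $\div(F)=N(P-O)$, the same uniformizers $y$ and $x/y$ (up to harmless sign) at $P$ and $O$, and the same leading-coefficient evaluations. One caveat in part~(2): your final step asserting that $(t+1)/t\otimes\tfrac{1}{3}$ ``matches'' $t(1+t)^2\otimes\tfrac{1}{3}$ is off by a sign --- their \emph{product} is the cube $(1+t)^3$, so they are negatives of each other in $k^\times\otimes\Q/\Z$, not equal (unlike the $N=2$ case, where $\tfrac{1}{2}=-\tfrac{1}{2}$ in $\Q/\Z$); the paper's own proof in fact records the conclusion as $-\gen{P,P}=t(1+t)^2\otimes\tfrac{1}{3}$ (and likewise $-\gen{P,P}=a\otimes\tfrac{1}{3}$ when $t=-1$), so your computation agrees with it.
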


\begin{proof}
(1)
The divisor of the function $x$ on $E_{2,t,a}$ is $2(P-O)$. We choose $y$
  to be a uniformizer at $P$ and $y/x$ to be a uniformizer at $O$.
  Then
\begin{align*}
 & \lc_P(x, 2)=\frac x{y^2}\Big|_P \otimes \frac{1}{2}
  =\begin{cases}
   \displaystyle\frac{4(1+t)}{a^2t}\otimes \frac{1}{2}, &\text{if }t\neq-1,\\
   \displaystyle\frac{1}{a}\otimes\frac{1}{2}, &\text{if }t=-1, \end{cases} 
\\
 & \lc_O(x, 2)=\frac{x}{(y/x)^2}\Big|_O \otimes \frac{1}{2}=0.
\end{align*}
  Therefore, we have $\gen{P,P}=t(1+t)\otimes(1/2)$ when $t\neq-1$ and
  $\gen{P,P}=a\otimes(1/2)$ when $t=-1$.

(2) The divisor of the function $y$ on $E$ is $3(P-O)$.
We choose
  $x$ to be a uniformizer at $P$ and $x/y$ to be a uniformizer at
  $O$. Then
\begin{align*}
& \lc_P(y, 3)=\frac y{x^3}\Big|_P \otimes \frac{1}{3}=
  \begin{cases}
    \displaystyle\frac{27(1+t)}{t}\otimes\frac{1}{3}, &\text{if }t\neq-1, \\
    \displaystyle\frac{1}{a}\otimes\frac{1}{3}, &\text{if }t=-1, \end{cases}
\\
& \lc_O(y, 3)=\frac y{(y/x)^3}\Big|_O \otimes \frac{1}{3}=0.
\end{align*}
  It follows that $-\gen{P,P}=t(1+t)^2\otimes(1/3)$ when $t\neq-1$
  and $-\gen{P,P}=a\otimes(1/3)$ when $t=-1$.
\end{proof}

Next, we consider elliptic curves with full $2$-torsion points.
It is clear that, 
if $E$ is an elliptic curve over a field $k$ with $\ch k \not= 2$
 having two
    distinct $k$-rational $2$-torsion points $P$ and $Q$, then
    $(E,P,Q)\simeq(E'_{2,u,a},(0,0),(a,0))$ for some $u\neq0,1\in k$
    and $a\in k^\times$, where
    \begin{equation} \label{eq: E2ua}
      E_{2,u,a}': y^2=x(x-a)(x-au).
    \end{equation}

\begin{lemma} \label{lemma: pairing E2u}
  Let $E=E_{2,u,a}'$ be an elliptic curve from \eqref{eq: E2ua}
   over a subfield $k$ of $\C$,
  and put $P:=(0,0), Q:=(a,0)$. Then we have
\[
\gen{P,P}=u\otimes \frac{1}{2}, \quad \gen{Q,Q}=(1-u)\otimes \frac{1}{2},
\quad
\gen{P,Q}=a\otimes \frac{1}{2}.
\]
\end{lemma}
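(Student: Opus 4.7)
The plan is to compute each of the three pairings directly from the definition \eqref{eq:pairing1}, following exactly the pattern of the preceding Lemma~\ref{lemma: pairing N=2,3} and Proposition~\ref{proposition: pairing 2}. Writing $R := (au, 0)$ for the third non-trivial $2$-torsion point, the equation $y^2 = x(x-a)(x-au)$ immediately yields
\[
\div(x) = 2(P-O), \qquad \div(x-a) = 2(Q-O), \qquad \div(x-au) = 2(R-O),
\]
so I can take $n=2$ throughout and let $f$ be either $x$ or $x-a$ depending on which torsion class is being realized as $\div(f) = nD$.

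The key steps, in order, are: (i) fix uniformizers --- $y$ at each of $P$ and $Q$ (since the corresponding linear factor in $y^2 = x(x-a)(x-au)$ vanishes simply there), and the conventional $y/x$ at $O$; (ii) observe that the leading coefficient at $O$ contributes trivially for each $f$ in play, because $(x-c)/(y/x)^2 = x^2(x-c)/y^2 \to 1$ at $O$, exactly as in the earlier proofs; (iii) for each pairing evaluate the remaining nontrivial $\lc$-contribution. Concretely, for $\gen{P,P}$ take $f=x$, giving
\[
\bigl(x/y^2\bigr)(P) = \bigl(1/((x-a)(x-au))\bigr)(0,0) = 1/(a^2 u),
\]
which tensored with $1/2$ yields $u \otimes 1/2$, since $a^{\pm 2}$ and $-1$ tensor to zero against $1/2$ in $k^\times \otimes \Q/\Z$. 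For $\gen{Q,Q}$ take $f=x-a$, giving $\bigl((x-a)/y^2\bigr)(Q) = 1/(a^2(1-u))$, which yields $(1-u) \otimes 1/2$. For $\gen{P,Q}$ take $f=x$ (so $\div(f) = 2(P-O)$) and evaluate at $Q$, where $x$ is a unit: $x(Q) = a$ gives $a \otimes 1/2$.

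There is no substantive obstacle here: the computation is a direct substitution and the only cancellations are elementary facts about $k^\times \otimes \Q/\Z$. A convenient sanity check is the symmetry $\gen{P,Q} = \gen{Q,P}$, since computing the latter with $f=x-a$ gives $(x-a)(P) = -a$, and $-a \otimes 1/2 = a \otimes 1/2$, in agreement.
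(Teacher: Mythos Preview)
Your proof is correct and follows essentially the same route as the paper: the same divisor relations $\div(x)=2(P-O)$, $\div(x-a)=2(Q-O)$, the same uniformizers $y$ at $P,Q$ and $y/x$ (equivalently $x/y$) at $O$, and the same direct evaluation of leading coefficients. The only cosmetic difference is that you compute $\gen{P,Q}$ by evaluating $x$ at $Q$, whereas the paper evaluates $x-a$ at $P$ (i.e.\ computes $\gen{Q,P}$); you already note this in your symmetry sanity check.
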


\begin{proof} 
We have $\div x=2(P-O)$ and $\div(x-a)=2(Q-O)$.
  Choose $y$, $y$, $x/y$ to be uniformizers at
  $P$, $Q$, and $O$, respectively. We have
  \begin{align*}
  &  \lc_P(x, 2)=\frac{x}{y^2}\Big|_P\otimes \frac{1}{2}
    =\frac1{(x-a)(x-au)}\Big|_P\otimes\frac12
    =u\otimes \frac{1}{2},\\
  &  \lc_Q(x-a, 2)=\frac{x-a}{y^2}\Big|_Q\otimes
    \frac{1}{2}
    =\frac1{x(x-au)}\Big|_Q\otimes\frac12
  =(1-u)\otimes \frac{1}{2},\\
  &  \lc_P(x-a,2)=(-a)\otimes \frac{1}{2},\\
  &  \lc_O(x,2)=\frac{x}{(y/x)^2}\Big|_O\otimes \frac{1}{2}=0,\\
  &  \lc_O(x-a,2)=\frac{x-a}{(y/x)^2}\Big|_O\otimes \frac{1}{2}=0.
  \end{align*}
  Then the formulas for the pairing follows.
\end{proof}

\begin{remark}
When $k=\C$, one can show
    $(\C/\Lambda_\tau,1/2,\tau/2)\simeq(E_{2,u(\tau),a}',(0,0),(a,0))$,  
    where $\tau \in \C$ and $u(\tau)$ is the uniformizer of $X_1^0(2,2)=X(2)$ given in
    Lemma \ref{lemma: uniformizers 2}.
We will not use this fact.
\end{remark}

\section{Modular curves $X_1(N)_M^\pm$ and $X_1^0(N, 2)_{\ul{M}}^{\pm}$}

Using the universal polynomials from
Propositions \ref{proposition: pairing 1} and \ref{proposition: pairing 2},
we are now able to construct new modular curves
that encode information on the values of the pairing.

\subsection{Modular curve $X_1(N)_M^\pm$}

\begin{definition}\label{def:X1NM}
Assume $N\in\{4,5,6,7,8,9,10,12\}$,
let $M$ be a positive divisor of $N$,
and $\epsilon \in \{ +, - \}$.
Let $f_N(t)$ be the function from Proposition \ref{proposition: pairing 1}.
We define $X_1(N)_M^\epsilon$ to be the 
smooth projective model over $\Q$ of $\epsilon s^M = f_N(t)$.
In other words, $X_1(N)_M^\epsilon$ is the normalization of 
the fiber product 
\[
X_1'(N)_M^\epsilon := X_1(N)  \times_{\P^1_z} \P^1_s, \]
where 
$X_1(N) \to \P^1_z$ is 
the composition of the isomorphism 
$X_1(N) \cong \P^1_t$ from Lemma \ref{lemma: uniformizers}
followed by $\P^1_t \to \P^1_z,\ z=f_N(t)$,
and $\P^1_s \to \P^1_z$ is defined by $z=\epsilon s^M$.
We write $Y_1(N)_M^\epsilon \subset X_1(N)_M^\epsilon$ 
for the inverse image of $Y_1(N)$,
and put 
$X_1(N)_M^\pm := X_1(N)_M^+ \sqcup X_1(N)_M^-$, 
$Y_1(N)_M^\pm := Y_1(N)_M^+ \sqcup Y_1(N)_M^-$.
Here, as usual, we denote
the complement of cusps in $X_1(N)$ by $Y_1(N)$.
\end{definition}

The motivation for the definition will 
become clear in Corollary \ref{cor:criterion1} below.

\begin{remark}\label{rem:plus-minus}
\begin{enumerate}
\item 
If $k$ is a field of characteristic zero
containing an element $\zeta$ such that $\zeta^M=-1$,
there is an isomorphism $(X_1(N)_M^+)_k \cong (X_1(N)_M^-)_k$
given by $s \mapsto \zeta s$, where $( - )_k$ denotes the base change by $k/\Q$.
In particular, if $M$ is odd
we have $X_1(N)_M^+ \cong X_1(N)_M^-$ over $\Q$ 
(by taking $\zeta=-1$).
\item 
Observe that $t=0$ is a simple zero of $f_N(t)$ for any $N$.
It follows that $X_1'(N)_M^\e$ is non-singular 
at $(s, t)=(0, 0) \in X_1'(N)_M^\e(\Q)$,
and hence
there is a unique $\Q$-rational point on $X_1(N)_M^\e$ above it.
\item
For a positive divisor $M'$ of $M$, 
there is a finite morphism
\[ 
X_1(N)_{M}^\e \to X_1(N)_{M'}^\e
\]
induced by $s \mapsto s^{M/M'}$.
In particular, by taking $M'=1$ we get
\[
X_1(N)_M^\e \to X_1(N)_1^\e= X_1(N).
\]
\end{enumerate}
\end{remark}

The normalization process is unnecessary for $Y_1(N)_N^\epsilon$.

\begin{lemma}\label{lem:normalization}
The canonical map $Y_1(N)_M^\epsilon \to X_1'(N)_M^\epsilon$
is an isomorphism onto its image.
\end{lemma}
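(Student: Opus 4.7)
The plan is to show that the scheme $X_1'(N)_M^\e$ is already smooth---hence normal, since it is a curve---above the open subset $Y_1(N) \subset X_1(N)$. Once that is established, the normalization morphism $X_1(N)_M^\e \to X_1'(N)_M^\e$ is an isomorphism over the normal locus, and in particular an open immersion on $Y_1(N)_M^\e$, which is exactly the assertion of the lemma.

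The first step is to locate the singular points of $X_1'(N)_M^\e$. Working locally on $\A^1_t \times \A^1_s$, the fiber product is cut out by the single equation $\e s^M = f_N(t)$, so the Jacobian criterion forces any singular point to satisfy $f_N'(t) = 0$ and $M\e s^{M-1} = 0$ simultaneously. Together with the defining equation itself this means $s = 0$ and $t_0$ is a multiple root of $f_N$. A parallel analysis using local parameters at $t = \infty$ and $s = \infty$ shows that the only further candidate is the point $(t, s) = (\infty, \infty)$, which is genuinely singular when $\deg f_N \geq 2$ and $M \geq 2$ (otherwise one of the two factor maps is unramified there).

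The second step is to verify that every such singular point lies above a cusp of $X_1(N)$. The point $t = \infty$ is always a cusp, so $(\infty, \infty)$ is harmless. For the finite multiple roots of $f_N(t)$, one reads them off from the factorizations in Proposition \ref{proposition: pairing 1} and matches them against the cuspidal $t$-values tabulated in Lemma \ref{lemma: uniformizers}: the value $t = 1$ (cusp $1/2$) is a multiple root of $f_N$ for $N \in \{6,7,8,9,10,12\}$; the value $t = -1$ (cusp $1/4$ when $N \in \{8,10\}$, cusp $1/6$ when $N=12$) is a multiple root for $N \in \{8,10,12\}$; the roots of $1 - t + t^2$ (cusps $a/3$) are multiple roots for $N \in \{9,12\}$; the roots of $1 + t^2$ (cusps $a/4$) for $N = 12$; and the roots of $1 + t - t^2$ (cusps $a/5$) for $N = 10$. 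The only simple root of $f_N$ is $t = 0$, itself the cusp $0$, but at a simple root the fiber product is automatically smooth.

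Combining these two steps, every singular point of $X_1'(N)_M^\e$ lies above a cusp of $X_1(N)$, so $X_1'(N)_M^\e$ is smooth---and therefore normal---at every point above $Y_1(N)$. The universal property of normalization then gives the desired open immersion. The principal obstacle is purely combinatorial: matching the explicit factorizations of $f_N$ against the cuspidal $t$-values case by case. No single case is difficult, but completeness requires a routine tabular check across all $N \in \{4,5,6,7,8,9,10,12\}$.
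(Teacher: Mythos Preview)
Your proof is correct and follows essentially the same strategy as the paper: show that $X_1'(N)_M^\e$ is already smooth above $Y_1(N)$, so that normalization is an isomorphism there. The paper reaches this more quickly by noting that the branch locus of $z=\e s^M$ is contained in $\{0,\infty\}$ and that $f_N^{-1}(\{0,\infty\})$ consists entirely of cusps, whereas you apply the Jacobian criterion and verify the multiple roots of $f_N$ case by case; both ultimately rest on the same cuspidal $t$-values from Lemma~\ref{lemma: uniformizers}.
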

\begin{proof}
Observe that the branch locus of 
$\P^1_s \to \P^1_z,  \ z=\epsilon s^M$ is 
contained in $\{ 0, \infty \} \subset \P^1_z$
(equal if $M>1$),
and that all points in $f_N^{-1}(\{ 0, \infty \}) \subset \P^1_t=X_1(N)$ 
are cusps by Lemma \ref{lemma: uniformizers}.
We conclude $X_1'(N)_M^\epsilon$ is smooth over $\Q$
outside cusps, whence the lemma.
\end{proof}

Recall that we have specified an isomorphism
$t : X_1(N) \cong \P^1$ in 
Lemma \ref{lemma: uniformizers}.
By the moduli interpretation,
there is a bijection between $Y_1(N)(\Q)$
and the set of isomorphism classes of pairs $(E, P)$ of 
an elliptic curve $E$ over $\Q$
and a $\Q$-rational point $P$ of order $N$.
By abuse of notation,
we write $(E, P) \in Y_1(N)(\Q)$ in this situation,
and write $t(E, P) \in \Q^\times \subset \P^1(\Q)$ 
for the point corresponding to $(E, P) \in Y_1(N)(\Q)$.
(We have $t(E, P) \not= 0, \infty$ since 
$t=0, \infty$ correspond to cusps on $X_1(N)$
by Lemma \ref{lemma: uniformizers}.)

\begin{lemma}\label{lem:Y1NM}
Let $N, M, \epsilon$ be as in Definition \ref{def:X1NM}.
\begin{enumerate}
\item 
There is a bijection between $Y_1(N)_M^\epsilon(\Q)$
and the set of triples  $(E, P, s)$ where
$(E, P) \in Y_1(N)(\Q)$ 
and $s \in \Q^\times$ such that $\epsilon s^M = f_N(t(E, P))$.
\item
Given $(E, P) \in Y_1(N)(\Q)$,
we have
\[
\gen{P, (N/M)P}=0 \Leftrightarrow
(E, P) \in \Im(Y_1(N)_M^\pm(\Q) \to Y_1(N)(\Q)).
\]
If $M$ is odd, one may replace $Y_1(N)_M^\pm(\Q)$ by $Y_1(N)_M^+(\Q)$.
\item
Suppose $M'|M$.
Then $X_1(N)_{M}^\epsilon \cong \P^1$ implies $X_1(N)_{M'}^\epsilon\cong \P^1$,
and $Y_1(N)_{M'}^\epsilon(\Q)=\emptyset$ implies $Y_1(N)_M^\epsilon(\Q)=\emptyset$.
\end{enumerate}
\end{lemma}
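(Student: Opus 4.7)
The plan is to treat the three parts in sequence, using Lemma \ref{lem:normalization}, Proposition \ref{proposition: pairing 1}, and Remark \ref{rem:plus-minus} as the main ingredients.

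For part (1), I would first appeal to Lemma \ref{lem:normalization} to identify $Y_1(N)_M^\epsilon$ with the open subscheme of $X_1'(N)_M^\epsilon = X_1(N) \times_{\P^1_z} \P^1_s$ sitting above $Y_1(N) \subset X_1(N)$. By the universal property of the fiber product, a $\Q$-point of this open set is precisely a pair consisting of a non-cuspidal point of $X_1(N)(\Q)$ --- by the moduli interpretation, a pair $(E, P) \in Y_1(N)(\Q)$ --- together with a point $s \in \P^1_s(\Q)$ whose image in $\P^1_z(\Q)$ coincides with that of $t(E, P)$, i.e., $\epsilon s^M = f_N(t(E, P))$. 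Since the zeros and poles of $f_N$ on $X_1(N)$ all lie among the cusps (as recorded in the proof of Lemma \ref{lem:normalization}), non-cuspidality of $(E, P)$ forces $s \in \Q^\times$ automatically.

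For part (2), I would combine bilinearity of the pairing with Proposition \ref{proposition: pairing 1} to compute
\[
-\langle P, (N/M) P \rangle
= (N/M)\cdot\bigl(-\langle P, P\rangle\bigr)
= f_N(t(E, P)) \otimes (1/M) \in \Q^\times \otimes \Q/\Z.
\]
The key elementary input is that, because the torsion subgroup of $\Q^\times$ is $\{\pm 1\}$ and $\Q/\Z$ is divisible, unique factorization gives $\Q^\times \otimes \Q/\Z \cong \bigoplus_p \Q/\Z$, and in this description an element $a \otimes (1/M)$ vanishes if and only if $a = \pm s^M$ for some $s \in \Q^\times$. Consequently, $\langle P, (N/M)P \rangle = 0$ is equivalent to the solvability of $\epsilon s^M = f_N(t(E, P))$ for some $\epsilon \in \{+, -\}$ and $s \in \Q^\times$, which by part (1) is precisely the condition $(E, P) \in \Im(Y_1(N)_M^\pm(\Q) \to Y_1(N)(\Q))$. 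When $M$ is odd, $-s^M = (-s)^M$, so the sign can be absorbed into $s$, and $Y_1(N)_M^+$ alone suffices.

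For part (3), Remark \ref{rem:plus-minus}(3) provides a finite morphism $\pi : X_1(N)_M^\epsilon \to X_1(N)_{M'}^\epsilon$ induced by $s \mapsto s^{M/M'}$. If $X_1(N)_M^\epsilon \cong \P^1$, then Riemann--Hurwitz forces the target to have genus $0$; Remark \ref{rem:plus-minus}(2) (applied with $M$ replaced by $M'$) supplies a $\Q$-rational point on $X_1(N)_{M'}^\epsilon$, so $X_1(N)_{M'}^\epsilon \cong \P^1$ over $\Q$. The second assertion is immediate since $\pi$ restricts to $Y_1(N)_M^\epsilon \to Y_1(N)_{M'}^\epsilon$ and sends $\Q$-points to $\Q$-points. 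The only step with real content is the vanishing criterion in part (2), which is exactly the reason the sign $\epsilon$ appears in the definition of $X_1(N)_M^\pm$; the rest is bookkeeping.
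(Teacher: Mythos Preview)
Your proof is correct and follows essentially the same approach as the paper's: part (1) via Lemma \ref{lem:normalization} and the fiber-product description, part (2) via Proposition \ref{proposition: pairing 1} and the elementary characterization of vanishing in $\Q^\times \otimes \Q/\Z$, and part (3) via the finite morphism of Remark \ref{rem:plus-minus}(3). Your write-up simply fills in more of the routine details (e.g., the genus and rational-point argument in (3)) that the paper leaves implicit.
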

\begin{proof}
(1) follows from the definition and Lemma \ref{lem:normalization}.
Since $\gen{P, (N/M)P}=f_N(t(E, P)) \otimes (1/M)$
by Proposition \ref{proposition: pairing 1},
we get (2) from (1)
together with an elementary observation that
for $z \in \Q^\times$ one has $z \otimes (1/M)=0$ in $\Q^\times \otimes (\Q/\Z)$ 
if and only if $z=\pm s^M$ for some $s \in \Q^\times$.
To see (3), 
it suffices to note that
there is a finite morphism $X_1(N)_M^\epsilon \to X_1(N)_{M'}^\epsilon$
induced by $s \mapsto s^{M/M'}$.
We are done.
\end{proof}

\begin{corollary}\label{cor:criterion1}
For $(E, P) \in Y_1(N)(\Q)$,
we have $(E(\Q)_\Tor, E(\Q)_\Tor^\is) =(\gen{P}, \gen{(N/M)P})$
if and only if all of the following conditions are met:
\begin{enumerate}
\item 
$(E, P) \not\in \Im(Y_1(N\ell)(\Q) \to Y_1(N)(\Q))$
for any $\ell \in \{ 2, 3, 5 \}$ such that $N \ell \le 12$.
\item 
$(E, P) \not\in \Im(Y_1^0(N, 2)(\Q) \to Y_1(N)(\Q))$.
\item
$(E, P) \in \Im(Y_1(N)_M^\pm(\Q) \to Y_1(N)(\Q))$.
\item
$(E, P) \not\in \Im(Y_1(N)_{M'}^\pm(\Q) \to Y_1(N)(\Q))$
for any $M'>M$ such that  $M|M'|N$.
\end{enumerate}
Here $Y_1^0(N, 2)$ denotes the
complement of cusps in $X_1^0(N, 2)$.

\end{corollary}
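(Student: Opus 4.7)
The plan is to split the biconditional into two independent parts: (A) conditions (1) and (2) are jointly equivalent to $E(\Q)_\Tor = \gen{P}$; and (B) under the hypothesis $E(\Q)_\Tor = \gen{P}$, conditions (3) and (4) are jointly equivalent to $E(\Q)_\Tor^\is = \gen{(N/M)P}$. The corollary is then the conjunction of (A) and (B).

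For (A), invoke Mazur's torsion theorem. If $E(\Q)_\Tor \supsetneq \gen P$, then either $E(\Q)_\Tor$ is cyclic of some order $N'$ with $N \mid N'$ and $N < N' \le 12$, or it is non-cyclic and hence contains full rational $2$-torsion. In the cyclic case, a direct inspection over $N \in \{4, 5, 6, 7, 8, 9, 10, 12\}$ shows one can pick a prime $\ell \in \{2, 3, 5\}$ with $N\ell \mid N'$, and then inside $E(\Q)_\Tor \cong \Z/N'\Z$ one can (after a small adjustment of representatives) select a generator $P'$ of an order-$N\ell$ subgroup satisfying $\ell P' = P$; this exhibits $(E, P) \in \Im(Y_1(N\ell)(\Q) \to Y_1(N)(\Q))$. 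In the non-cyclic case, $E(\Q)[2]$ has order four and therefore contains a $\Q$-rational $Q \notin \gen P$, so $(E, P, Q) \in Y_1^0(N, 2)(\Q)$ lifts $(E, P)$. The reverse implications are immediate from the moduli interpretations of $Y_1(N\ell)$ and $Y_1^0(N, 2)$.

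For (B), assume $E(\Q)_\Tor = \gen{P}$. By biadditivity of the pairing and cyclicity of $\gen P$, an element $kP$ lies in $E(\Q)_\Tor^\is$ if and only if $k \langle P, P \rangle = 0$, so $E(\Q)_\Tor^\is$ is the unique subgroup of $\gen P$ of order $N/d$, where $d := \ord(\langle P, P\rangle)$ divides $N$. Thus $E(\Q)_\Tor^\is = \gen{(N/M)P}$ if and only if $d = N/M$. By Proposition \ref{proposition: pairing 1} combined with Lemma \ref{lem:Y1NM}(2), for any divisor $M' \mid N$ the condition $(E, P) \in \Im(Y_1(N)_{M'}^\pm(\Q) \to Y_1(N)(\Q))$ is equivalent to $(N/M')\langle P, P \rangle = 0$, i.e., $M' \mid N/d$. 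Hence condition (3) reads $M \mid N/d$, and condition (4) forbids any $M'$ with $M \mid M' \mid N$ and $M' > M$ from dividing $N/d$. Once (3) holds, $N/d$ is itself a multiple of $M$ dividing $N$; applying (4) with $M' = N/d$ then rules out $N/d > M$, forcing $N/d = M$, as required.

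The main obstacle is the case analysis at the beginning of (A): one must verify, for each pair $(N, N')$ of Mazur numbers with $N \in \{4,\ldots,10,12\}$, $N \mid N'$, and $N < N' \le 12$, the existence of $\ell \in \{2, 3, 5\}$ with $N\ell \mid N'$, and must show that any $\Q$-rational point of order $N$ can actually be written as $\ell$ times a $\Q$-rational point of order $N\ell$ inside the cyclic group $E(\Q)_\Tor$. Both checks are elementary but must be done by inspection; the remainder of the argument is formal manipulation of the pairing.
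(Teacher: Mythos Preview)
Your proposal is correct and follows essentially the same approach as the paper. The paper's own proof is extremely terse (it simply asserts that (1) and (2) mean $E(\Q)_\Tor=\gen{P}$ and then invokes Lemma \ref{lem:Y1NM}(2) for the rest), and what you have written is precisely the natural expansion of that sketch: the Mazur-based case analysis for part (A) and the order computation for $\langle P,P\rangle$ in part (B) are exactly the details the paper leaves implicit.
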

\begin{proof}
(1) and (2) mean $E(\Q)_\Tor=\gen{P}$.
Under these conditions,
(3) and (4) mean $E(\Q)_\Tor^\is=\gen{(N/M)P}$
by Lemma \ref{lem:Y1NM} (2).
\end{proof}

These modular curves turns out to be
models of the classical modular curves.

\begin{theorem}\label{proposition: pairing 1-mod}
  Assume that $N\in\{4,5,6,7,8,9,10,12\}$ and
  let $M$ be a positive divisor of $N$.
  Denote by $X_1(MN,N)$ the modular curve over $\Q$ 
  associated to the congruence subgroup
    $$
    \Gamma_1(MN,N):= \Gamma_0(MN) \cap \Gamma_1(N)
    =\left\{\M abcd\in\SL(2,\Z) : \ 
      a,d\equiv1\bmod N, \ MN|c \right\},
    $$
  on which the cusp $0$ is $\Q$-rational.
  Then $X_1(N)_M^+$ is isomorphic to $X_1(MN,N)$ over $\Q$.
%
%
\end{theorem}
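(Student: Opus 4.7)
The plan is to exhibit a modular function $s(\tau)$ on $\Gamma_1(MN, N)$, rational over $\Q$ with respect to the standard model (in which the cusp $0$ is $\Q$-rational), such that $s(\tau)^M = f_N(t(\tau))$, where $t$ and $f_N$ are as in Lemma \ref{lemma: uniformizers} and Proposition \ref{proposition: pairing 1}. Once such an $s$ is produced, the forgetful cover $X_1(MN, N) \to X_1(N) = \mathbb{P}^1_t$ will realize the sought isomorphism, because this cover has degree $[\Gamma_1(N) : \Gamma_1(MN, N)] = M$ (established via the surjection $\Gamma_1(N) \twoheadrightarrow \Z/M\Z$, $\SM abcd \mapsto c/N \bmod M$, whose kernel is $\Gamma_1(MN, N)$), while $X^M - f_N(t)$ is irreducible over $\Q(t)$ since $t$ divides $f_N(t)$ exactly to the first power for every $N$ in question (and in particular $f_N(t)$ is not a $p$-th power in $\Q(t)$ for any prime $p$, nor does it lie in $-4 \cdot \Q(t)^4$). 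These two facts force the inclusion $\Q(t, s) \hookrightarrow \Q(X_1(MN, N))$ to be an equality, yielding a birational map over $\Q$ to $X_1(N)_M^+$; since both curves are smooth projective, this is an isomorphism.

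To construct $s(\tau)$, I would reuse the function
\[
H(\tau) := \frac{E_{0,1}^{(N)}(\tau)\, E_{0,3}^{(N)}(\tau)}{E_{0,2}^{(N)}(\tau)^2}
\]
from the proof of Proposition \ref{proposition: pairing 1}. That proof shows $H(\tau)^N = f_N(t(\tau)) \cdot \rho(t(\tau))^{-N}$ for an explicit rational function $\rho(t) \in \Q(t)$ (namely $\rho = 1$ for $N \in \{4,5,6,7,9\}$; $\rho = 1+t$ for $N \in \{8, 12\}$; $\rho = 1+t-t^2$ for $N = 10$), so that $(\rho(t(\tau))\, H(\tau))^N = f_N(t(\tau))$. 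Using $M \mid N$, I would set
\[
s(\tau) := \zeta \cdot \bigl(\rho(t(\tau))\, H(\tau)\bigr)^{N/M},
\]
where $\zeta$ is the unique $M$-th root of unity making the Fourier expansion of $s$ at the cusp $0$ (obtained by pullback via $\SM 0{-1}10$, as in the proof of Lemma \ref{lemma: uniformizers}) have leading coefficient $1$. By construction $s(\tau)^M = f_N(t(\tau))$, and $\Q$-rationality of the entire Fourier expansion at $0$ follows from an explicit $q$-expansion computation: the pullback turns $s(-1/\tau)$ into a product of the $E_{g, 0}^{(N)}(\tau)$'s to integer powers, multiplied by a rational function of $t(-1/\tau)$, all of whose coefficients are rational once $\zeta$ has been fixed.

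The main technical step is verifying that $s(\tau)$ is modular on $\Gamma_1(MN, N)$. For $\gamma = \SM abcd \in \Gamma_1(MN, N)$ with $c>0$, I would write $c = MN c_0$ and $d = 1 + N d_0$ and apply Lemma \ref{lemma: generalized Dedekind} to each of $E_{0, 1}^{(N)}$, $E_{0, 2}^{(N)}$, $E_{0, 3}^{(N)}$, using \eqref{eq: E translation} to reduce $E_{hc, hd}^{(N)}$ back to $E_{0, h}^{(N)}$. The $12$th-root factor $\epsilon(a,b,c,d)$ cancels in the ratio $H(\tau)$ (the numerator and denominator have total weight zero), and the remaining root-of-unity corrections collapse to a sign depending only on the parities of $c_0$ and $d_0$; the $\SL_2(\Z)$-constraint $ad - bc = 1$ excludes $c, d$ both being even, forcing that sign to be $+1$. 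Raising to the $(N/M)$-th power and multiplying by $\zeta$ and by the $\Gamma_1(N)$-invariant factor $\rho(t(\tau))^{N/M}$ preserves the invariance. This is the hardest part, because it requires carrying out exactly the root-of-unity bookkeeping built into Lemma \ref{lemma: generalized Dedekind}: the $12$th roots from $\epsilon$, the signs from $e^{\pi i\delta}$, and the $N$th roots from the passage $E_{hc, hd}^{(N)} \mapsto E_{0, h}^{(N)}$ via \eqref{eq: E translation} must all combine and cancel. A streamlined approach is to first prove a variant of Corollary \ref{corollary: modular criterion} giving congruence conditions on exponents $e_h$ under which $\prod_h E_{0, h}^{(N)}(\tau)^{e_h}$ is modular on $\Gamma_1(MN, N)$, and then to verify that our candidate $s(\tau)$ satisfies those relaxed conditions in each of the eight cases $N \in \{4,5,6,7,8,9,10,12\}$. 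The sign convention $X_1(N)_M^+$ (rather than $X_1(N)_M^-$) is enforced by our choice of $\zeta$ making the leading Fourier coefficient equal to $+1$, which corresponds to $s^M = +f_N(t)$.
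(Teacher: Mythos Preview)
Your approach is essentially the paper's: define $s(\tau)=H(\tau)^{N/M}$ with $H=E_{0,1}^{(N)}E_{0,3}^{(N)}/(E_{0,2}^{(N)})^2$, check it is modular on $\Gamma_1(MN,N)$ with $\Q$-rational Fourier expansion at~$0$, and conclude by the degree count. Two simplifications relative to your outline: the modularity step is a two-line computation rather than a parity/sign argument---applying Lemma~\ref{lemma: generalized Dedekind} and \eqref{eq: E translation} directly yields $H(\gamma\tau)=e^{-2\pi i c/N^2}H(\tau)$ for every $\gamma=\SM abcd\in\Gamma_1(N)$, so $s(\gamma\tau)=e^{-2\pi i c/(MN)}s(\tau)$ and invariance under $\Gamma_1(MN,N)$ is immediate---and the normalizing root of unity $\zeta$ is in fact $1$ (the expansion of $H(-1/\tau)$ in the $E_{h,0}^{(N)}$ already has rational coefficients), while the factor $\rho(t)^{N/M}$ you insert is harmless but unnecessary since it lies in $\Q(t)$ and only changes $s$ by an element of the base field.
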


\begin{proof}
  Let
  $$
  s(\tau)=\left(\frac{E_{0,1}^{(N)}(\tau)E_{0,3}^{(N)}(\tau)}
  {E_{0,2}^{(N)}(\tau)^2}\right)^{N/M}.
  $$
  By Lemma \ref{lemma: generalized Dedekind}, for $\gamma=\SM
  abcd\in\Gamma_1(N)$, we have
  \begin{equation*}
    \begin{split}
      s(\gamma\tau)
      &=\left(e^{2\pi icd/N^2}\frac
        {E_{c,d}^{(N)}(\tau)E_{3c,3d}^{(N)}(\tau)}
      {E_{2c,2d}^{(N)}(\tau)^2}\right)^{N/M}
      =\left(e^{2\pi icd/N^2}\frac
        {E_{c,1}^{(N)}(\tau)E_{3c,3}^{(N)}(\tau)}
      {E_{2c,2}^{(N)}(\tau)^2}\right)^{N/M} \\
      &=\left(e^{2\pi icd/N^2}e^{-4\pi ic/N^2}\frac
        {E_{0,1}^{(N)}(\tau)E_{0,3}^{(N)}(\tau)}
        {E_{0,2}^{(N)}(\tau)^2}\right)^{N/M}
      =e^{-2\pi ic/MN}s(\tau).
    \end{split}
  \end{equation*}
  Therefore, $s(\gamma\tau)=s(\tau)$ if
  $\gamma\in\Gamma_1(MN,N)$. Also,
  $[\C(s(\tau),t(\tau)):\C(t(\tau))]=M=[\C(X_1(MN,N)):\C(X_1(N))]$.
  Consequently, $s(\tau)$ and $t(\tau)$ generate the field of modular
  functions on $X_1(MN,N)$. Moreover, using the transformation formula
  for generalized Dedekind eta functions, we see that the coefficients
  in the Fourier expansion of $s(\tau)$ at the cusp $0$ are all
  rational numbers. Thus, the relation $s^M=f_N(t)$ is an
  equation for $X_1(MN,N)$ over $\Q$. 
  This completes the proof of the theorem.
\end{proof}

\begin{corollary}\label{cor:Y0MN-empty}
If $Y_0(MN)(\Q)=\emptyset$, then $Y_1(N)_M^+(\Q)=\emptyset$.
(According to Mazur and Kenku \cite{MazIsog, Kenku}
(see also \cite[Theorem 2.2]{BalMaz}),
one has $Y_0(N)(\Q) = \emptyset$ 
precisely when 
$N \ge 20$ and $N \not\in \{21, 25, 27, 37, 43, 67, 163 \}$.)
\end{corollary}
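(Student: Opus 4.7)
The plan is to derive the corollary directly from Theorem \ref{proposition: pairing 1-mod} by composing with the natural forgetful morphism $X_1(MN,N) \to X_0(MN)$ and taking the contrapositive.

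First, by Theorem \ref{proposition: pairing 1-mod}, there is a $\Q$-isomorphism $X_1(N)_M^+ \cong X_1(MN,N)$ which, by construction, restricts to an isomorphism of the non-cuspidal loci $Y_1(N)_M^+ \cong Y_1(MN,N)$. Next, the inclusion $\Gamma_1(MN,N) \subset \Gamma_0(MN)$ induces a finite morphism $\pi : X_1(MN,N) \to X_0(MN)$, which is defined over $\Q$ because both modular curves have $\Q$-rational models on which the cusp $0$ is $\Q$-rational and $\pi$ corresponds to the moduli-theoretic map forgetting the level-$N$ point while retaining the cyclic subgroup of order $MN$. In particular $\pi$ sends cusps to cusps and therefore restricts to a morphism $Y_1(MN,N) \to Y_0(MN)$.

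Composing, we obtain a $\Q$-morphism $Y_1(N)_M^+ \to Y_0(MN)$, and hence an induced map $Y_1(N)_M^+(\Q) \to Y_0(MN)(\Q)$. If the target is empty, so is the source, which is the desired implication.

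The only point requiring verification is that $\pi$ preserves the cuspidal/non-cuspidal decomposition over $\Q$, but this is immediate from the moduli interpretation (or from the explicit formula $s^M = f_N(t)$ together with the fact that $f_N(t)$ vanishes or blows up only at cusps of $X_1(N)$, as used in Lemma \ref{lem:normalization}). No serious obstacle arises, so the argument is essentially a one-line consequence of Theorem \ref{proposition: pairing 1-mod}.
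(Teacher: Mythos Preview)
Your proof is correct and takes essentially the same approach as the paper: the paper's proof is a single sentence noting that the result follows from Theorem \ref{proposition: pairing 1-mod} together with the existence of a $\Q$-morphism $X_1(MN,N)\to X_0(MN)$ induced by the inclusion $\Gamma_1(MN,N)\subset\Gamma_0(MN)$. Your version simply spells out the details of why this morphism respects the cuspidal/non-cuspidal decomposition, which the paper leaves implicit.
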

\begin{proof}
This is a direct consequence of Theorem \ref{proposition: pairing 1-mod},
since there is a morphism $X_1(MN, N) \to X_0(MN)$ over $\Q$
(as $\Gamma_1(MN, N) \subset \Gamma_0(MN)$ by definition).
\end{proof}

\begin{remark}
Let $k$ be a subfield of $\C$.
Then $X_1(MN, N)(k)$ parameterize
isomorphism classes of triples $(E, P, C)$ of an elliptic curve $E$ over $k$,
a $k$-rational point $P$ of order $N$,
and a cyclic subgroup $C$ of order $MN$ defined over $k$,
subject to the condition  $MC=\gen{P}$.
Given such a triple $(E, P, C)$,
take $\tau \in \H$ such that
$(E,P,C)\cong (\C/(\Z\tau+\Z), 1/N, \langle 1/MN \rangle)$,
and put  
\[s:=\left(\frac{E_{0,1}^{(N)}(\tau)E_{0,3}^{(N)}(\tau)}
{E_{0,2}^{(N)}(\tau)^2}\right)^{N/M}.
\]
Then, $(E,P,C)$ corresponds to $(E, P, s) \in X_1^0(N)_M^+(k)$
under the the isomorphism in Theorem \ref{proposition: pairing 1-mod}.
This can be read off from the proof of
Theorem \ref{proposition: pairing 1-mod}.
\end{remark}

\begin{proposition}\label{prop:rat-pt-Y1NM}
Assume that $N\in\{4,5,6,7,8,9,10,12\}$ and
let $M$ be a positive divisor of $N$, and $\epsilon \in \{ +, - \}$.
\begin{enumerate}
\item 
If one of the following conditions holds,
$X_1(N)_M^\epsilon$ is isomorphic to $\P^1$ over $\Q$.
\begin{enumerate}
\item $M=1$.
\item $(N, M) \in \{ (4,2), (4, 4), (5, 5), (6, 2), (6, 3), (8, 2) \}$.
\end{enumerate}
\item
If neither of (a), (b) is satisfied, then we have $Y_1(N)_M^\epsilon(\Q)=\emptyset$.
\end{enumerate}
\end{proposition}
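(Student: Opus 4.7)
The plan is to split the proof along parts (1) and (2), treating the two sign cases $\epsilon = \pm$ separately in part (2).

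For part (1), case (a) ($M = 1$) is immediate since $X_1(N)_1^\epsilon = X_1(N) \cong_\Q \P^1$ by Lemma \ref{lemma: uniformizers}. For case (b), I will exhibit an explicit $\Q$-rational parameterization $\P^1 \to X_1(N)_M^\epsilon$ in each of the six listed pairs. When $f_N(t) = t$ (the cases $(4,2), (4,4), (5,5)$), the equation $\epsilon s^M = t$ is already rational. For $(N, M) \in \{(6,2), (8,2)\}$, substituting $s = r(1-t)$, respectively $s = r(1-t)(1+t)^2$, cancels the extra factors in $f_N(t)/t$ and reduces to $\epsilon r^2 = t$. For $(N,M) = (6,3)$, the substitution $s = w(1-t)$ gives $\epsilon w^3(1-t) = t$, which solves rationally as $t = \epsilon w^3/(1+\epsilon w^3)$. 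Since $X_1(N)_M^\epsilon$ is smooth by Lemma \ref{lem:normalization}, each of these birational maps extends to a $\Q$-isomorphism with $\P^1$.

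For part (2) with $\epsilon = +$, the main tool will be Theorem \ref{proposition: pairing 1-mod} combined with Corollary \ref{cor:Y0MN-empty}: the isomorphism $X_1(N)_M^+ \cong X_1(MN, N)$ yields a morphism to $X_0(MN)$, and by Mazur--Kenku $Y_0(MN)(\Q) = \emptyset$ whenever $MN \ge 20$ and $MN \notin \{21, 25, 27, 37, 43, 67, 163\}$. Running through the pairs $(N, M)$ outside (a) and (b), one checks that this covers every such pair except the single exception $(9, 3)$ (where $MN = 27$). For this exception I will argue directly: $X_0(27)$ is the elliptic curve $27\mathrm{a}1$ with $X_0(27)(\Q) = \Z/3\Z$, and one verifies by hand that the non-cuspidal $\Q$-points on $X_0(27)$, which parametrize CM elliptic curves, fail to lift to $\Q$-points on $X_1(27, 9)$ because the underlying CM curves do not admit $\Q$-rational $9$-torsion.

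For part (2) with $\epsilon = -$, Remark \ref{rem:plus-minus}(1) immediately reduces all odd-$M$ subcases to $\epsilon = +$, already handled. For the remaining even-$M$ pairs, Theorem \ref{proposition: pairing 1-mod} no longer supplies a classical modular interpretation, and I will analyze the quadratic twist $-s^M = f_N(t)$ directly. When a proper divisor $M' \mid M$ satisfies $Y_1(N)_{M'}^-(\Q) = \emptyset$, Lemma \ref{lem:Y1NM}(3) does the job; in the base cases where this reduction fails (notably $(10,2)$ and $(12,2)$, since the smaller divisors $M'=1,2$ only land in case (b)), I will perform a direct descent on the genus-one twist, either producing a local obstruction at a small prime or determining the Mordell--Weil group of its Jacobian and verifying that every $\Q$-point is cuspidal. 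The hard part will be exactly this $\epsilon = -$, even-$M$ subcase together with the exceptional $(9, 3, +)$: both demand explicit descent arguments outside the formal Mazur--Kenku framework, and the twisted covers with $N \in \{10, 12\}$ in particular will require the most labor-intensive case-by-case analysis.
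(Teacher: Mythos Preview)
Your proposal is essentially correct and follows the same overall architecture as the paper, but differs in how several subcases are dispatched.

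For part~(1), you give explicit rational parameterizations, whereas the paper argues more abstractly: since $X_1(N)_M^\epsilon$ always has a $\Q$-point (Remark~\ref{rem:plus-minus}(2)), it suffices to check genus zero, which follows (after reducing to $\epsilon=+$ via Remark~\ref{rem:plus-minus}(1)) from the identification with $X_1(MN,N)$. Both work; yours is more hands-on. (A small quibble: $X_1(N)_M^\epsilon$ is smooth by \emph{definition}, not by Lemma~\ref{lem:normalization}.)

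For part~(2), your treatment of $\epsilon=+$ via Mazur--Kenku is the same as the paper's, and your $(9,3)$ argument through $X_0(27)$ is valid. The paper instead handles $(9,3)$ more directly: it exhibits an explicit $\Q$-isomorphism $X_1(9)_3^\epsilon \cong E$ where $E : y^2+y=x^3$ (with $(s,t)=(xy(y^2+y+1),\,y+1)$), computes $E(\Q)=\{O,(0,0),(0,-1)\}$, and checks these map to cusps. For $\epsilon=-$ with even $M$, you slightly under-count the base cases: besides $(10,2)$ and $(12,2)$, the pairs $(6,6)$ and $(8,4)$ are also irreducible (their proper divisors land in (a) or (b)), so Lemma~\ref{lem:Y1NM}(3) does not help there either. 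The paper treats all five residual cases $\{(6,6),(8,4),(9,3),(10,2),(12,2)\}$ uniformly: for each it writes down an explicit $\Q$-isomorphism of $X_1(N)_M^-$ with a rank-zero elliptic curve and verifies that every $\Q$-point is cuspidal. Your proposed ``direct descent on the genus-one twist'' would accomplish the same thing, but the paper's table of explicit isomorphisms is cleaner than an ad~hoc descent.
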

\begin{proof}
(1) 
Since $X_1(N)_M^\epsilon$ always has a $\Q$-rational point by Remark \ref{rem:plus-minus} (2),
it suffices to show that its genus is zero.
For this, we may assume $\epsilon=+$ by Remark \ref{rem:plus-minus} (1),
but then 
Theorem \ref{proposition: pairing 1-mod} shows 
$X_1(N)_M^+$ is isomorphic to $X_1(MN, N)$,
which has genus zero.
(This can also be seen easily from the defining equation $s^M=f_N(z)$.)

(2) 
We first note that
the cases 
$(N, M) \in \{ (8, 8), (9, 9), 
(10, 10), (12, 4), (12, 6), (12, 12) \}$ 
are reduced to the case $(N, M')$ for
some smaller $M'|N$ by Lemma \ref{lem:Y1NM} (3).
Next, if $\epsilon=+$ or $M$ is odd,
we can apply Remark \ref{rem:plus-minus} (1)
and Corollary \ref{cor:Y0MN-empty}
to deduce $Y_1(N)_M^\epsilon(\Q)=\emptyset$.
This covers the cases
$(N, M) \in \{ (7, 7), (10, 5), (12, 3) \}$ with $\epsilon \in \{+, -\}$,
and $(N, M) \in \{ (6,6), (8,4), (10,2), (12,2) \}$ with $\epsilon=-$.

This leaves us with the cases
$(N, M) \in \{ (6,6), (8, 4), (9,3), (10, 2), (12, 2) \}$ with $\epsilon=-$.
(For $(N, M)=(9,3)$, the value of $\epsilon$ does not matter
because of Remark \ref{rem:plus-minus} (1).)
In all cases, there exists 
an isomorphism $\alpha : E \to X_1(N)_M^\epsilon$,
where
$E$ is an elliptic curve over $\Q$
with finitely many $\Q$-rational points,
as exhibited in the following table,
where we set
$h_1(x)=(1-x)(1+x)^4(1+x-x^2)^2$ and
$h_2(x)=(1+x)(1+x+x^2)(1+x^2)^2(1-x)^3$.
  $$ \extrarowheight3pt
  \begin{array}{c|c|c|c} \hline\hline
    (N, M) & E & E(\Q) \setminus \{ O \} & 
(s, t)=\alpha(x,y) \\ \hline
(6,6) & y^2=x^3-1 & (1,0) &
\left(-\frac{xy}{1-x^3},\frac1{1-x^3}\right)
\\
(8,4) & y^2=x^3+x & (0,0) &
\left(\frac y{x^2}(1-\frac{1}{x^2}),-\frac1{x^2}\right)
\\
(9,3) & y^2+y=x^3 & (0,0), (0,-1) &
\left(xy(y^2+y+1),y+1\right)
\\
(10,2) & y^2=x^3-x^2-x & (0,0) &
\left(yh_1(x),x\right)
\\
(12,2) & y^2=x^3+x^2+x & (0,0) &
\left(yh_2(x),x\right)
\\
 \hline\hline
  \end{array}
  $$
Since all points of $\alpha(E(\Q))$ are cusps 
by Lemma \ref{lemma: uniformizers},
we get $Y_1(\Q)^\epsilon_M=\emptyset$ in these cases as well.
We are done.
%
\end{proof}

\subsection{Modular curve $X_1^0(N, 2)_{\ul{M}}^\pm$}

\begin{definition}\label{def:X10NM}
Assume $N\in\{4,6,8\}$.
We set
\begin{align*}
&\sM_N:= \{ \ul{M}=(M_1, M_2, M_3) \mid M_1 | N, \ M_2, M_3 \in \{ 1, 2 \} \},\ 
\\
&\sS:= \{ +, - \}^3=\{ \ul{\e}=(\e_1, \e_2, \e_3) \mid \e_1, \e_2, \e_3 \in \{ +, - \} \}.
\end{align*}
Let $g_N^{(\nu)}(u) \ (\nu=1, 2, 3)$ 
be the function from Proposition \ref{proposition: pairing 2}.
For $\ul{M} \in \sM_N$ and $\ul{\epsilon} \in \sS$,
we define $X_1^0(N, 2)_{\ul{M}}^{\ul{\epsilon}}$ to be the 
smooth projective model over $\Q$ of the curve defined by
$\epsilon_i v_\nu^{M_\nu} = g_N^{(\nu)}(u) \ (\nu=1,2,3)$.
In other words, 
$X_1^0(N, 2)_{\ul{M}}^{\ul{\epsilon}}$
is the normalization of the fiber product 
\[
{X_1^0}'(N, 2)_{\ul{M}}^{\ul{\epsilon}}
:= X_1^0(N, 2) \times_{(\P^1_{w_1} \times \P^1_{w_2} \times \P^1_{w_3})}
  (\P^1_{v_1} \times \P^1_{v_2} \times \P^1_{v_3}),
\]
where 
$X_1^0(N, 2) \to \P^1_{w_1} \times \P^1_{w_2} \times \P^1_{w_3}$ 
is the composition of the isomorphism 
$X_1^0(N, 2) \cong \P^1_u$ from Lemma \ref{lemma: uniformizers 2}
followed by $\P^1_u \to \P^1_{w_1} \times \P^1_{w_2} \times \P^1_{w_3},\ 
w_\nu=g_N^{(\nu)}(u)\ (\nu=1, 2, 3)$,
and 
$\P^1_{v_1} \times \P^1_{v_2} \times \P^1_{v_3} \to \P^1_{w_1} \times \P^1_{w_2} \times \P^1_{w_3}$ 
is defined by $w_\nu=v_\nu^{M_\nu} \ (\nu=1, 2, 3)$.
(Here $\times$ denotes the fiber product over $\Spec \Q$.)
We write 
$Y_1^0(N, 2)_{\ul{M}}^{\ul{\e}} \subset X_1^0(N, 2)_{\ul{M}}^{\ul{\e}}$
for the inverse image of 
the complement $Y_1^0(N, 2)$ of cusps in $X_1^0(N, 2)$.
Also, we write
$X_1^0(N, 2)_{\ul{M}}^\pm$
for the disjoint union of
$X_1^0(N,2)_{\ul{M}}^{\ul{\e}}$
over $\ul{\e} \in \sS$,
and similarly for 
$Y_1^0(N, 2)_{\ul{M}}^\pm$.
\end{definition}

\begin{remark}\label{rem:plus-minus2}
\begin{enumerate}
\item
Observe that
$u=0$ is a simple zero of $g_N^{(1)}(u)$, 
and $g_N^{(\nu)}(0)=1 \in \P^1_{w_\nu}$ is not a branch point of 
$\P^1_{v_\nu} \to \P^1_{w_\nu}, w_\nu=v_\nu^{M_\nu}$ for $\nu=2, 3$.
It follows that, if $\e_2=\e_3=+$, then
${X_1^0}'(N)_{\ul{M}}^{\ul{\e}}$ is non-singular 
at $(u, v_1, v_2, v_3)=(0, 0, 1, 1) \in {X_1^0}'(N,2)_{\ul{M}}^{\ul{\e}}(\Q)$,
and hence
there is a unique $\Q$-rational point on $X_1^0(N,2)_{\ul{M}}^{\ul{\e}}$ above it.
\item 
For 
$\ul{M}=(M_1, M_2, M_3), \ul{M}'=(M_1', M_2', M_3') \in \sM_N$,
we write $\ul{M}'|\ul{M}$ if $M_\nu'|M_\nu$ holds for all $\nu=1, 2, 3$.
In this case, there is a finite morphism
\[ 
X_1^0(N, 2)_{\ul{M}}^{\ul{\e}} \to X_1^0(N, 2)_{\ul{M}'}^{\ul{\e}}
\]
induced by $v_\nu \mapsto v_\nu^{M_\nu/M_\nu'} \ (\nu=1, 2, 3)$.
By taking $\ul{M}'=(1,1,1)$ we get
\[
X_1^0(N, 2)_{\ul{M}}^{\ul{\e}} \to X_1^0(N, 2)_{(1,1,1)}^{\ul{\e}}= X_1^0(N, 2).
\]
\end{enumerate}
\end{remark}

\begin{lemma}\label{lem:normalization2}
The canonical map 
$Y_1^0(N, 2)_{\ul{M}}^{\ul{\e}} \to 
{X_1^0}'(N, 2)_{\ul{M}}^{\ul{\e}}$
is an isomorphism onto its image.
\end{lemma}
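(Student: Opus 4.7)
The plan is to mimic the argument used in Lemma \ref{lem:normalization} for $X_1(N)_M^\epsilon$. The point is that the normalization only modifies the fiber product at singular points, and we will show that all singular points of ${X_1^0}'(N,2)_{\ul{M}}^{\ul{\e}}$ lie above cusps of $X_1^0(N,2)$.

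First, I would observe that for each $\nu \in \{1,2,3\}$, the morphism
\[
\P^1_{v_\nu} \longrightarrow \P^1_{w_\nu}, \qquad w_\nu = \e_\nu v_\nu^{M_\nu}
\]
is étale away from the branch locus $\{0,\infty\} \subset \P^1_{w_\nu}$ (if $M_\nu=1$ it is an isomorphism, and there is nothing to check). Consequently, the projection
\[
{X_1^0}'(N,2)_{\ul{M}}^{\ul{\e}} \longrightarrow X_1^0(N,2)
\]
is étale over the open subset
\[
U := X_1^0(N,2) \;\setminus\; \bigcup_{\nu=1}^{3}(g_N^{(\nu)})^{-1}(\{0,\infty\}).
\]
Since $X_1^0(N,2) \cong \P^1_u$ is smooth over $\Q$, the fiber product is smooth over $\Q$ above $U$, so the normalization map is an isomorphism on this locus.

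The remaining step is to verify the inclusion
\[
\bigcup_{\nu=1}^{3}(g_N^{(\nu)})^{-1}(\{0,\infty\}) \subset \{\text{cusps of } X_1^0(N,2)\}
\]
by inspection, using the explicit polynomials $g_N^{(\nu)}$ from Proposition \ref{proposition: pairing 2} together with the list of cusps from Lemma \ref{lemma: uniformizers 2}. For example, when $N=4$ the relevant values of $u$ are $0, \pm 1, \infty$, all of which are cusps; when $N=6$, the values $0, \pm 1, \pm 1/3, \infty$ all appear in the cusp list; when $N=8$, the zeros coming from the factors $1+u^2$, $1+2u-u^2$, $1-2u-u^2$ together with $0, \pm 1, \infty$ are precisely the $u$-coordinates of cusps listed in Lemma \ref{lemma: uniformizers 2}. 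Once this case check is complete, it follows that ${X_1^0}'(N,2)_{\ul{M}}^{\ul{\e}}$ is smooth over $\Q$ at every non-cuspidal point, and therefore its normalization agrees with it on $Y_1^0(N,2)_{\ul{M}}^{\ul{\e}}$, proving the lemma.

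The only non-automatic step is the case check in the second paragraph, but this is a routine verification factorizing each $g_N^{(\nu)}(u)$ and matching its roots and poles against the tabulated cusp values of $u(\tau)$; no conceptual obstacle is expected.
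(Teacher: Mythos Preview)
Your proposal is correct and is exactly the argument the paper has in mind: the paper's proof consists of the single sentence ``This is shown by the same argument as Lemma~\ref{lem:normalization},'' and you have spelled out precisely that argument in the present setting, including the case check that the zeros and poles of each $g_N^{(\nu)}(u)$ occur only at cuspidal values of $u$.
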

\begin{proof}
This is shown by the same argument as Lemma \ref{lem:normalization}.
\end{proof}

Recall that we have specified an isomorphism
$u : X_1^0(N, 2) \cong \P^1$ in 
Lemma \ref{lemma: uniformizers 2}.
By the moduli interpretation,
there is a bijection between $Y_1^0(N, 2)(\Q)$
and the set of isomorphism classes of triples $(E, P, Q)$ of 
an elliptic curve $E$ over $\Q$,
a $\Q$-rational point $P$ of order $N$,
and a $\Q$-rational point $Q$ of order $2$
such that $Q \not\in \gen{P}$.
By abuse of notation,
we write $(E, P, Q) \in Y_1^0(N, 2)(\Q)$ in this situation,
and write $u(E, P, Q) \in \Q^\times \subset \P^1(\Q)$ 
for the point corresponding to $(E, P, Q) \in Y_1^0(N, 2)(\Q)$.

\begin{lemma}\label{lem:Y10NM}
Let $N, \ul{M}, \ul{\e}$ be as in Definition \ref{def:X10NM}.
\begin{enumerate}
\item 
There is a bijection between 
$Y_1^0(N, 2)_{\ul{M}}^{\ul{\e}}(\Q)$
and the set of sextuple $(E, P, Q, v_1, v_2, v_3)$ where
$(E, P, Q) \in Y_1^0(N, 2)(\Q)$ 
and $v_\nu \in \Q^\times$ such that $\epsilon_\nu v_\nu^{M_\nu} = g_N^{(\nu)}(u(E, P, Q))$
$(\nu=1, 2, 3)$.
\item
Given $(E, P, Q) \in Y_1^0(N, 2)(\Q)$,
we have 
\[
\gen{P, \frac{N}{M_1}P}=\gen{Q, \frac{2}{M_2}Q}=\gen{P, \frac{2}{M_3}Q}=0
\]
if and only if
$(E, P, Q)$ belongs to the image of
the canonical map 
$Y_1^0(N, 2)_{\ul{M}}^\pm(\Q)
\to 
Y_1^0(N, 2)(\Q)$.
\item
Suppose $\ul{M}'|\ul{M}$.
If 
$X_1^0(N, 2)_{\ul{M}}^{\ul{\e}}\cong \P^1$,
then 
$X_1^0(N, 2)_{\ul{M}'}^{\ul{\e}}\cong \P^1$.
If 
$Y_1^0(N, 2)_{\ul{M}'}^{\ul{\e}}(\Q)=\emptyset$,
then 
$Y_1^0(N, 2)_{\ul{M}}^{\ul{\e}}(\Q)=\emptyset$.
\end{enumerate}
\end{lemma}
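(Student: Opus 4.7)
The plan is to emulate the proof of Lemma \ref{lem:Y1NM}, replacing $f_N(t)$ by the three functions $g_N^{(\nu)}(u)$ and handling three pairings simultaneously. For (1), by Lemma \ref{lem:normalization2}, a $\Q$-rational point of $Y_1^0(N,2)_{\ul{M}}^{\ul{\e}}$ corresponds to a $\Q$-rational point of the fiber product ${X_1^0}'(N,2)_{\ul{M}}^{\ul{\e}}$ lying over $Y_1^0(N,2)$. Unraveling the fiber product, such a point is a triple $(E,P,Q) \in Y_1^0(N,2)(\Q)$ together with $v_1,v_2,v_3 \in \Q$ satisfying $\epsilon_\nu v_\nu^{M_\nu} = g_N^{(\nu)}(u(E,P,Q))$ for $\nu=1,2,3$. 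A direct inspection of the expressions for $g_N^{(\nu)}(u)$ in Proposition \ref{proposition: pairing 2} (together with the table of cusp values in Lemma \ref{lemma: uniformizers 2}) shows that the zeros and poles of each $g_N^{(\nu)}$, viewed as a function on $X_1^0(N,2) \cong \P^1_u$, are concentrated at cusps; hence $g_N^{(\nu)}(u(E,P,Q)) \in \Q^\times$ for any non-cuspidal $(E,P,Q)$, and each $v_\nu$ automatically lies in $\Q^\times$.

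For (2), biadditivity of the pairing combined with Proposition \ref{proposition: pairing 2} yields
\[
\gen{P, \tfrac{N}{M_1}P} = -g_N^{(1)}(u) \otimes \tfrac{1}{M_1}, \quad
\gen{Q, \tfrac{2}{M_2}Q} = g_N^{(2)}(u) \otimes \tfrac{1}{M_2}, \quad
\gen{P, \tfrac{2}{M_3}Q} = g_N^{(3)}(u) \otimes \tfrac{1}{M_3},
\]
with $u = u(E,P,Q)$. Using the elementary fact (already exploited in the proof of Lemma \ref{lem:Y1NM}(2)) that for $z \in \Q^\times$ one has $z \otimes (1/m) = 0$ in $\Q^\times \otimes \Q/\Z$ if and only if $z = \pm w^m$ for some $w \in \Q^\times$, the simultaneous vanishing of the three pairings is equivalent to the existence of signs $\ul{\e} \in \sS$ and rationals $v_1, v_2, v_3 \in \Q^\times$ satisfying $\epsilon_\nu v_\nu^{M_\nu} = g_N^{(\nu)}(u)$. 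Combined with (1), this is exactly the condition that $(E,P,Q)$ lies in the image of $Y_1^0(N,2)_{\ul{M}}^\pm(\Q) \to Y_1^0(N,2)(\Q)$.

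Finally, (3) follows from the finite $\Q$-morphism $\phi : X_1^0(N, 2)_{\ul{M}}^{\ul{\e}} \to X_1^0(N, 2)_{\ul{M}'}^{\ul{\e}}$ of Remark \ref{rem:plus-minus2}(2). If the source is isomorphic to $\P^1$, then the target, being dominated by $\P^1$ via a finite map of smooth projective curves, has genus $0$ by Riemann--Hurwitz, and it inherits a $\Q$-rational point from the source via the surjective $\phi$, hence is itself isomorphic to $\P^1$ over $\Q$. For the second implication, any $\Q$-rational point $(E,P,Q,v_1,v_2,v_3)$ of $Y_1^0(N,2)_{\ul{M}}^{\ul{\e}}$ maps under $\phi$ to $(E,P,Q,v_1^{M_1/M_1'},v_2^{M_2/M_2'},v_3^{M_3/M_3'}) \in Y_1^0(N,2)_{\ul{M}'}^{\ul{\e}}(\Q)$, contradicting the hypothesis. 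The main bookkeeping issue will be in (2), where one must verify that the eightfold disjoint union indexed by $\ul{\e} \in \sS$ exactly accounts for the $\pm$-freedom in the vanishing criterion $z = \pm w^m$ across the three independent equations.
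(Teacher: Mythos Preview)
Your proposal is correct and follows essentially the same approach as the paper, which simply says ``this is shown by the same argument as Lemma \ref{lem:Y1NM}.'' You have carefully spelled out the details of that argument in the three-function setting, including the verification that $g_N^{(\nu)}$ has zeros and poles only at cusps and the Riemann--Hurwitz step for (3), all of which are accurate.
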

\begin{proof}
Again, this is shown by the same argument as Lemma \ref{lem:Y1NM}.
\end{proof}

\begin{corollary}\label{cor:criterion2}
Suppose $(E, P, Q) \in Y_1^0(N,2)(\Q)$
and $E(\Q)_\Tor=\gen{P, Q}$.
Let $M$ be a positive divisor of $N$,
and put $m := \gcd(N/M, 2)$.
Then we have
\begin{align*}
& \frac{N}{M} P \in E(\Q)_\Tor^\is \Leftrightarrow 
(E, P, Q) \in \Im(Y_1^0(N,2)_{(M,1,m)}^\pm(\Q) \to Y_1^0(N,2)(\Q)),
\\
& Q \in E(\Q)_\Tor^\is \Leftrightarrow 
(E, P, Q) \in \Im(Y_1^0(N,2)_{(1,2,2)}^\pm(\Q) \to Y_1^0(N,2)(\Q)).
\end{align*}
\end{corollary}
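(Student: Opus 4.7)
The plan is to reduce each of the two displayed equivalences to a direct application of Lemma \ref{lem:Y10NM}(2), combined with bilinearity and symmetry of the pairing. Under the hypothesis $E(\Q)_\Tor=\gen{P,Q}$, an element $a\in E(\Q)_\Tor$ lies in the intrinsic subgroup if and only if both $\gen{a,P}=0$ and $\gen{a,Q}=0$, so the task is to express these two pairing conditions within the three-condition framework $\gen{P,\tfrac{N}{M_1}P}=\gen{Q,\tfrac{2}{M_2}Q}=\gen{P,\tfrac{2}{M_3}Q}=0$ supplied by that lemma, choosing $\ul{M}$ so that the unwanted slots become tautologous.

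For $a=\tfrac{N}{M}P$, symmetry rewrites $\gen{\tfrac{N}{M}P,P}$ as $\gen{P,\tfrac{N}{M}P}$, which is exactly the $M_1=M$ condition. For the pairing against $Q$, bilinearity gives $\gen{\tfrac{N}{M}P,Q}=(N/M)\gen{P,Q}$, and Proposition \ref{proposition: pairing 2} shows $\gen{P,Q}$ is $2$-torsion. Hence this pairing vanishes automatically when $N/M$ is even, while it reduces to $\gen{P,Q}=0$ when $N/M$ is odd. This dichotomy is matched by the $M_3$-slot condition $\gen{P,\tfrac{2}{M_3}Q}=0$, which is vacuous for $M_3=1$ and equals $\gen{P,Q}=0$ for $M_3=2$; the choice of $m$ via the parity of $N/M$ (recorded through $\gcd(N/M,2)$) selects the correct value. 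Setting $M_2=1$ makes the remaining condition $\gen{Q,2Q}=0$ vacuous, and applying Lemma \ref{lem:Y10NM}(2) to $\ul{M}=(M,1,m)$ delivers the first equivalence.

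For $a=Q$, symmetry and bilinearity identify the two required conditions $\gen{Q,P}=\gen{P,Q}=0$ and $\gen{Q,Q}=0$ with the $M_3=2$ and $M_2=2$ slots respectively, while the $M_1=1$ condition $\gen{P,NP}=0$ is vacuous since $NP=O$. Applying Lemma \ref{lem:Y10NM}(2) to $\ul{M}=(1,2,2)$ then yields the second equivalence. The entire argument is essentially bookkeeping, and the only subtle point — which is the main thing to check carefully — is that the $2$-torsion nature of $\gen{P,Q}$ (provided by Proposition \ref{proposition: pairing 2}) correctly dictates the parity-based choice of $m$ used in the first equivalence.
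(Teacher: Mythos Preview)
Your overall strategy---expand the intrinsic condition via bilinearity and symmetry into the two equalities $\gen{(N/M)P,P}=0$ and $\gen{(N/M)P,Q}=0$, then match these against the three slots of Lemma~\ref{lem:Y10NM}(2)---is exactly what the paper's one-line ``follows immediately from Lemma~\ref{lem:Y10NM}'' intends. The second equivalence (for $Q$) is handled correctly.

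There is, however, a genuine gap in your first equivalence, precisely at the step you flag as ``the main thing to check''. Your own case analysis says: when $N/M$ is even the condition $\gen{(N/M)P,Q}=0$ is automatic, so one wants $M_3=1$; when $N/M$ is odd it becomes $\gen{P,Q}=0$, so one wants $M_3=2$. But $m=\gcd(N/M,2)$ assigns the values the \emph{other} way: $m=2$ for $N/M$ even and $m=1$ for $N/M$ odd. Your closing assertion that ``$\gcd(N/M,2)$ selects the correct value'' is therefore false, and what your argument actually proves is the equivalence with the triple $(M,1,\,2/\gcd(N/M,2))$, not $(M,1,m)$.

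In fact your computation exposes a typo in the corollary as stated. The paper's own applications agree with your analysis rather than with the printed formula: the non-existence proof for $A=\Z/6\Z\times\Z/2\Z$ uses $(3,1,1)$ for $2P$ (here $N/M=2$ is even) and $(2,1,2)$ for $3P$ (here $N/M=3$ is odd), and the existence proof for $A=\Z/4\Z\times\Z/2\Z$ uses $(2,1,1)$ for $2P$. A concrete witness for $N=4$, $M=2$: at $u=4$ one has $g_4^{(1)}(4)=400=20^2$, so $\gen{P,2P}=0$, while $g_4^{(3)}(4)=5$ is not $\pm$ a square, so $\gen{P,Q}\ne 0$; thus $2P$ pairs trivially with both generators $P,Q$, yet the point does not lie in the image from $Y_1^0(4,2)_{(2,1,2)}^\pm$. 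So the stated equivalence with $m=\gcd(N/M,2)$ cannot hold, and your proof does not (and cannot) establish it.
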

\begin{proof}
This follows immediately from Lemma \ref{lem:Y10NM}.
\end{proof}

Here is an  analogue of Proposition \ref{prop:rat-pt-Y1NM}.

\begin{proposition}\label{prop:rat-pt-Y10NM}
Let $N, \ul{M}, \ul{\e}$ be as in Definition \ref{def:X10NM}.
\begin{enumerate}
\item 
If one of the following conditions holds,
$X_1^0(N, 2)_{\ul{M}}^{\ul{\e}}$
is isomorphic to $\P^1$ over $\Q$.
\begin{enumerate}
\item 
$N=4$ and 
$\ul{M} \in \{(2,1,2), (1,2,2), (2,1,1), (1,2,1), (1,1,2), (1,1,1)\}$.
\item
$N=6$ and $\ul{M}
\in \{(1,1,2), (1,1,1)\}$.
\item
$N=8$ and $\ul{M}=(1,1,1)$.
\end{enumerate}
\item
If none of (a)--(c) is satisfied, then we have 
$Y_1^0(N, 2)_{\ul{M}}^{\ul{\e}}(\Q)=\emptyset$.
\end{enumerate}
\end{proposition}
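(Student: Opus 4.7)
The plan is to mirror the structure of the proof of Proposition \ref{prop:rat-pt-Y1NM}, suitably adapted to the three-fold Kummer structure of $X_1^0(N,2)_{\ul{M}}^{\ul{\e}}$. A useful preliminary reduction is that if $M_\nu$ is odd, the substitution $v_\nu \mapsto -v_\nu$ induces a $\Q$-isomorphism between the $\e_\nu = +$ and $\e_\nu = -$ twists. Since $M_\nu \in \{1,2\}$ for $\nu \in \{2,3\}$, this handles the sign issue whenever $M_\nu = 1$, so the genuine sign parameter is attached only to those $M_\nu$ which are even.

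For part (1), I would treat each listed $(N, \ul{M})$ directly. Each factor with $M_\nu = 1$ merely substitutes $v_\nu = \e_\nu g_N^{(\nu)}(u)$ into the fiber product, leaving the remaining defining equations in the variables $u$ and those $v_\mu$ with $M_\mu = 2$. Plugging in the explicit formulas for $g_N^{(\nu)}(u)$ from Proposition \ref{proposition: pairing 2}, one checks that each of the listed cases reduces to either $\P^1_u$ itself or a conic (or a product of conics fibered over $\P^1_u$) with a manifest $\Q$-rational point. Such a rational point is supplied by Remark \ref{rem:plus-minus2}(1) whenever $\e_2 = \e_3 = +$, and by direct inspection (typically at a preimage of a cusp that ramifies in the Kummer cover) for the remaining sign combinations.

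For part (2), the first step is to apply Lemma \ref{lem:Y10NM}(3) to reduce to the minimal $\ul{M}$'s not covered by (a)--(c). A short combinatorial check yields the minimal empty lists $\{(4,1,1),(2,2,1)\}$ for $N=4$, $\{(3,1,1),(2,1,1),(1,2,1)\}$ for $N=6$, and $\{(2,1,1),(1,2,1),(1,1,2)\}$ for $N=8$. For each such pair and each sign choice $\ul{\e}$ not already handled by the odd-$M$ symmetry above, I would exhibit an explicit $\Q$-isomorphism $X_1^0(N,2)_{\ul{M}}^{\ul{\e}} \cong E$, where $E$ is an elliptic curve of Mordell--Weil rank zero over $\Q$ whose torsion subgroup $E(\Q)_\Tor$ maps entirely into the cuspidal locus of $X_1^0(N,2)$, precisely in the spirit of the table in the proof of Proposition \ref{prop:rat-pt-Y1NM}(2). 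This yields $Y_1^0(N,2)_{\ul{M}}^{\ul{\e}}(\Q) = \emptyset$.

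The principal obstacle is the volume of the case analysis, which is heaviest for $N=8$: the polynomial $g_8^{(1)}$ has degree nine and couples nontrivially with $g_8^{(2)}, g_8^{(3)}$ over $u$, so finding well-chosen birational models, certifying Mordell--Weil rank zero via a $2$-descent or database lookup, and verifying that all rational torsion sits above cusps of $X_1^0(8,2)$, will require the bulk of the computational effort.
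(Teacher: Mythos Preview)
Your overall strategy matches the paper's proof closely: the reduction to minimal $\ul{M}$ via Lemma \ref{lem:Y10NM}(3) is identical, and your minimal lists for $N=4,6,8$ are exactly the ones the paper treats. Part (1) is also handled in the paper essentially as you describe.

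There is, however, a concrete gap in your plan for part (2). You propose to exhibit, for every minimal $(N,\ul{M},\ul{\e})$, a $\Q$-isomorphism from $X_1^0(N,2)_{\ul{M}}^{\ul{\e}}$ to an elliptic curve of rank zero. This fails in at least two places. First, for $N=4$, $\ul{M}=(2,2,1)$ the curve is defined by the pair $\e_1 v_1^2 = 4u(1+u)^2$, $\e_2 v_2^2 = 1-u^2$; the paper does not produce an isomorphism but rather a finite dominant morphism $(u,v_1,v_2)\mapsto(\mp u, v_1v_2/2)$ onto $y^2=x^3\pm x$, and then pulls back the (cuspidal) rational points. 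Second, and more seriously, for $N=8$, $\ul{M}=(1,1,2)$ the curve is $\e_3 v_3^2 = (1-u^4)(1+2u-u^2)$, which is hyperelliptic of genus two, so no elliptic-curve model exists at all. The paper simply asserts the full list of rational points on this genus-two curve; your write-up will need a separate argument here (e.g.\ a rank computation on the Jacobian plus Chabauty, or an appeal to known results on this specific curve).

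One further difference worth noting: for $N=8$, $\ul{M}=(2,1,1)$ the paper avoids any direct computation by a moduli-theoretic trick. Namely, $\langle P,4P\rangle=0$ for an $8$-torsion point $P$ forces $\langle 2P,2P\rangle=0$ for the $4$-torsion point $2P$, so any rational point on $Y_1^0(8,2)_{(2,1,1)}^{\pm}$ would produce one on $Y_1^0(4,2)_{(4,1,1)}^{\pm}$, which has already been shown empty. Your direct approach would also work here (the square-free part of $g_8^{(1)}$ is $u(1-u)(1+u)$, giving a genus-one model), but the paper's reduction is cleaner and avoids a further rank computation.
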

\begin{proof}
(1a) 
It suffices to show the first two cases by Lemma \ref{lem:Y10NM} (3).
Since $g_4^{(3)}(u)=1+u$ defines an isomorphism $\P^1_u \to \P^1_{w_3}$,
we have 
$X_1^0(4, 2)_{(2,1,2)}^{\ul{\e}} \cong X_1^0(4, 2)_{(2,1,1)}^{\ul{\e}}$
and
$X_1^0(4, 2)_{(1,2,2)}^{\ul{\e}} \cong X_1^0(4, 2)_{(1,2,1)}^{\ul{\e}}$.
Now ${X_1^0}'(N, 2)_{(2,1,1)}^{\ul{\e}}$
is a nodal curve $4u(1+u)^2=\pm v_1^2$,
and 
${X_1^0}'(N, 2)_{(1,2,1)}^{\ul{\e}}$
is a conic $1-u^2=\pm v_2^2$ having a $\Q$-rational point,
both of which become isomorphic to $\P^1$ after normalization.

(1b) 
It suffices to show the first case by Lemma \ref{lem:Y10NM} (3).
Now $X_1^0(6, 2)_{(1,1,2)}^{\ul{\e}}$ 
is a conic defined by $(1-u)(1-3u)=\pm v_3^2$ 
having a $\Q$-rational point, which is rational.

(1c) 
This is obvious since 
${X_1^0}(8, 2)_{(1,1,1)}^{\ul{\e}} \cong {X_1^0}(8, 2)$.

(2) We divide the proof by the value of $N$.

\paragraph{{\bf Case of $N=4$}}
It suffices to show the cases $\ul{M}=(4,1,1), (2,2,1)$
by Lemma \ref{lem:Y10NM} (3).
\begin{itemize}
\item 
Suppose first $\ul{M}=(4,1,1)$ so that
${X_1^0}(4, 2)_{\ul{M}}^{\ul{\e}}$ is given by $4u(1+u)^2 = \pm v_1^4$.
It is isomorphic to an elliptic curve
$E_\pm : y^2=x^3 \pm 4x$ 
by the change of variables $(u, v_1)=(-1 \pm (y^2/4x), \pm y/2)$
($(x,y)=(0,0)$ corresponds to $(u,v_1)=(-1, 0)$).
As
$E_+(\Q)=\{(0,0), (2, \pm 4), O \}$
and 
$E_-(\Q)=\{ (0,0), (\pm 2, 0), O \}$,
we find all points of ${X_1^0}(4, 2)_{\ul{M}}^{\ul{\e}}(\Q)$ are cusps.
\item 
Next suppose $\ul{M}=(2,2,1)$ so that
${X_1^0}(4, 2)_{\ul{M}}^{\ul{\e}}$ is given by 
$4u(1+u)^2 = \pm v_1^2, \ 1-u^2 = \pm v_2^2$.
There is a finite morphism from 
${X_1^0}(4, 2)_{\ul{M}}^{\ul{\e}}$
onto an elliptic curve $E : y^2 = x^3 \pm x$
given by $(x,y)=(\mp u, (v_1v_2)/2)$.
As $E(\Q)=\{ (0,0), (\pm1, 0), O \}$,
we conclude all rational points on 
${X_1^0}(4, 2)_{\ul{M}}^{\ul{\e}}$ are cusps.
\end{itemize}

\paragraph{{\bf Case of $N=6$}}
It suffices to consider the cases $\ul{M}=(3,1,1), (2,1,1), (1,2,1)$.
\begin{itemize}
\item 
Suppose first $\ul{M}=(3,1,1)$. 
We may assume $\epsilon_1=+$ (as $M_1=3$ is odd),
and then
${X_1^0}(6, 2)_{\ul{M}}^{\ul{\e}}$ is given by 
$u(1-u)(1+3u)^3(1+u)^4=v_1^3$.
It is isomorphic to an elliptic curve
$E : x-x^3=y^3$
by the change of variables 
$(x,y)=(u, v_1/((1+3u)(1+u)))$.
All points of $E(\Q)=\{ (0,1), (\pm 1,0), (\pm 1/3, \pm 2/3), O \}$
correspond to cusps.
\item 
Suppose $\ul{M}=(2,1,1)$ so that
${X_1^0}(6, 2)_{\ul{M}}^{\ul{\e}}$ is given by 
$u(1-u)(1+3u)^3(1+u)^4=\pm v_1^2$.
It is isomorphic to an elliptic curve
$E_\pm : x(1-x)(1+3x)=\pm y^2$
by $(x,y)=(u, v_1/((1+3u)(1+u)^2))$.
Again, all points of
\begin{align*}
&E_+(\Q)=\{ (0,0), (1,0), (-1, \pm 2), (-1/3, 0), (1/3, \pm 2/3), O \},
\\
&E_-(\Q)=\{ (0,0), (1,0), (-1/3, 0), O \}
\end{align*}
correspond to cusps.
\item 
Finally, suppose $\ul{M}=(1,2,1)$ so that
${X_1^0}(6, 2)_{\ul{M}}^{\ul{\e}}$ is given by 
$(1-u^2)(1-9u^2)=\pm v_2^2$.
We find 
${X_1^0}(6, 2)_{\ul{M}}^{\ul{\e}}(\Q)=\{ (\pm 1,0), (\pm 1/3, 0), (0, \pm 1), \pm \infty \}$
if $\e_2=+$,
and
${X_1^0}(6, 2)_{\ul{M}}^{\ul{\e}}(\Q)=\{ (\pm 1,0), (\pm 1/3, 0)\}$
if $\e_2=-$
(in $(u, v_2)$-coordinates).
Again, these points correspond to cusps.
\end{itemize}

\paragraph{{\bf Case of $N=8$}}
It suffices to consider the cases $\ul{M}=(2,1,1), (1,2,1), (1,1,2)$.
\begin{itemize}
\item 
Suppose first $\ul{M}=(2,1,1)$.
By Lemma \ref{lem:Y10NM} (2), 
$(E, P, Q) \in Y_1^0(8, 2)(\Q)$ 
is in the image from ${Y_1^0}(8, 2)_{\ul{M}}^{\pm}(\Q)$
if and only if $\gen{P, 4P}=0$.
This implies $(E, 2P, Q) \in Y_1^0(4, 2)(\Q)$
is in the image from ${Y_1^0}(4, 2)_{(4,1,1)}^{\pm}(\Q)$,
but we have already shown the latter set is empty.
\item 
Next suppose $\ul{M}=(1,2,1)$ so that
${X_1^0}(8, 2)_{\ul{M}}^{\ul{\e}}$ is a genus one curve
defined by $1-6u^2+u^4 = \pm v_2^2$.
We have 
${X_1^0}(8, 2)_{\ul{M}}^{\ul{\e}}= \{(0,\pm 1), \pm \infty \}$ if $\e_2=+$,
and
${X_1^0}(8, 2)_{\ul{M}}^{\ul{\e}}= \{(\pm 1,\pm 2), \pm \infty \}$ if $\e_2=-$
(in $(u, v_2)$-coordinates),
all of which are cuspidal.
\item 
Finally, suppose $\ul{M}=(1,1,2)$ so that
${X_1^0}(8, 2)_{\ul{M}}^{\ul{\e}}$ is a hyperelliptic curve defined by
$(1-u^4)(1+2u-u^2)=\pm v_3^2$.
We find 
${X_1^0}(8, 2)_{\ul{M}}^{\ul{\e}}(\Q)
=\{ (\pm 1,0), (0, \pm 1), \pm \infty \}$
if $\e_3=+$,
and
${X_1^0}(8, 2)_{\ul{M}}^{\ul{\e}}(\Q)=\{ (\pm 1,0) \}$
if $\e_3=-$
(in $(u, v_3)$-coordinates).
Again, these points are cuspidal.
\qedhere
\end{itemize}
\end{proof}

\section{End of the proof}\label{sect:ref-mazur2}
We are now in position to
complete the proof of Theorem \ref{theorem: intrinsic torsion}.

\subsection{Proof of non-existence}
We first consider the case $E(\Q)_\Tor$ is cyclic.
Let $E$ be an elliptic curve over $\Q$,
$A=\Z/N\Z$ with $N \in \{ 6,7,8,9,10,12 \}$,
and $\alpha : E(\Q)_\Tor \cong A$ an isomorphism.
Take a generator $P \in E(\Q)_\Tor$ and a positive divisor $M|N$
such that $M$ does not appear as the order of $B$ in the list of 
Theorem \ref{theorem: intrinsic torsion}.
If $(N/M)P \in E(\Q)_\Tor^\is$ would hold,
Lemma \ref{lem:Y1NM} implies 
$(E, P) \in \Im(Y_1(N)_M^\pm(\Q) \to Y_1(N)(\Q))$,
but we have $Y_1(N)_M^\pm(\Q)=\emptyset$ by Proposition \ref{prop:rat-pt-Y1NM}.
This proves that there is no $E$ such that $(E(\Q)_\Tor, E(\Q)_\Tor^\is)=(\gen{P}, \gen{(N/M)P})$.

The proofs of the cases
$A=\Z/N\Z \times \Z/2\Z$ for $N = 4, 6, 8$ are similar.
As a sample, we detail the case $N=6$.
Let $E$ an elliptic curve over $\Q$,
and let $\alpha : E(\Q)_\Tor \cong A:=\Z/6\Z \times \Z/2\Z$ be an isomorphism.
If $E(\Q)_\Tor^\is$ were non-trivial,
there should exist $P, Q \in E(\Q)_\Tor$ such that
$(E, P, Q) \in Y_1^0(6, 2)(\Q)$ and 
$\{ 2P, 3P, Q \} \cap E(\Q)_\Tor^\is \not= \emptyset$.
Since we have
$Y_1^0(6, 2)_{(3,1,1)}^\pm(\Q)=
Y_1^0(6, 2)_{(2,1,2)}^\pm(\Q)=
Y_1^0(6, 2)_{(1,2,2)}^\pm(\Q)=\emptyset$
by Proposition \ref{prop:rat-pt-Y10NM},
this would contradict Lemma \ref{lem:Y10NM} (2).
The cases $N=4, 6$ are similar and skipped.

The reason for the non-existence in the case $A=\Z/2\Z \times \Z/2\Z$ 
is quite different.
Indeed, there do exist infinitely many
$(E, P, Q) \in Y_1^0(2,2)(\Q)$ with 
$\gen{P, P}=\gen{Q, Q}=\gen{P, Q}=0$.
However, all of them come from $Y_1^0(4,2)(\Q)$,
as is shown in the following proposition,
which completes the proof of the non-existence part of 
Theorem \ref{theorem: intrinsic torsion}.

\begin{proposition}
Let $(E, P, Q) \in Y_1^0(2,2)(\Q)$ 
and suppose that $P, Q \in E(\Q)_\Tor^\is$.
Then $E(\Q)_\Tor$ contains an element of order four.
Consequently, there is no elliptic curve $E$ over $\Q$ such that
  $E(\Q)_\Tor=E(\Q)_\Tor^\is\simeq\Z/2\Z\times\Z/2\Z$.
\end{proposition}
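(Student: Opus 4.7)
The plan is to use Lemma \ref{lemma: pairing E2u} to convert the intrinsic subgroup hypothesis into arithmetic constraints on the Weierstrass coefficients of $E$, and then to exhibit an explicit rational $4$-division point.

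First, since $(E, P, Q) \in Y_1^0(2,2)(\Q)$, I would put $E$ in the form $E_{2,u,a}'$ of \eqref{eq: E2ua} with $(P, Q) = ((0,0), (a, 0))$, for some $u \in \Q \setminus \{0, 1\}$ and $a \in \Q^\times$. By Lemma \ref{lemma: pairing E2u}, the hypothesis $P, Q \in E(\Q)_\Tor^\is$ forces (in particular) the three pairings
\[
\langle P,P\rangle = u \otimes \tfrac{1}{2}, \quad
\langle Q,Q\rangle = (1-u) \otimes \tfrac{1}{2}, \quad
\langle P,Q\rangle = a \otimes \tfrac{1}{2}
\]
to vanish in $\Q^\times \otimes \Q/\Z$, so that $u$, $1 - u$ and $a$ are all squares in $\Q^\times$; write $u = s^2$, $1 - u = t^2$ and $a = c^2$ with $s, t, c \in \Q^\times$.

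Applying the change of variables $(x, y) = (c^2 X, c^3 Y)$ brings $E$ into the Legendre-like form $E' : Y^2 = X(X-1)(X - s^2)$ in which moreover $s^2 + t^2 = 1$. The key step is then to exhibit
\[
R := (1 + t,\ t(1+t)) \in E'(\Q)
\]
as a rational $4$-torsion point. A direct substitution (using $s^2 = 1 - t^2$ to simplify $X(X-1)(X-s^2)$ at $X = 1+t$) confirms $R \in E'(\Q)$, and the line $Y = (1+t)(X - 1)$ is visibly tangent to $E'$ at $R$ with third intersection $(1, 0)$; equivalently, $2R = (1, 0)$. Since $Y(R) = t(1+t) \ne 0$ (for $t \ne 0, -1$, both of which follow from $u \ne 0, 1$), the point $R$ is not $2$-torsion and thus has order exactly $4$. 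Pulling back under the coordinate change gives a $\Q$-rational $4$-torsion point of $E$, proving the first assertion.

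The final consequence is immediate: if $E(\Q)_\Tor = E(\Q)_\Tor^\is \simeq \Z/2\Z \times \Z/2\Z$, then choosing generators produces a triple $(E, P, Q) \in Y_1^0(2,2)(\Q)$ satisfying the hypothesis of the first part, giving an element of order $4$ in $E(\Q)_\Tor$, contradiction. There is no real obstacle: the conceptual input is Lemma \ref{lemma: pairing E2u}, and the one clever ingredient is the observation that the vanishing of $\langle Q, Q\rangle$---that is, $1-u$ being a square---is precisely what produces the rational $4$-division point.
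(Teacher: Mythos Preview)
Your argument has a genuine gap at the very first deduction. In $\Q^\times\otimes\Q/\Z$ one has $(-1)\otimes\tfrac12=(-1)\otimes\bigl(2\cdot\tfrac14\bigr)=1\otimes\tfrac14=0$, so the vanishing of $r\otimes\tfrac12$ is equivalent only to $r\in\pm(\Q^\times)^2$, not to $r\in(\Q^\times)^2$. Thus from $\langle P,P\rangle=\langle Q,Q\rangle=\langle P,Q\rangle=0$ you may only conclude $u=\pm s^2$, $1-u=\pm t^2$, $a=\pm c^2$. Your coordinate change $(x,y)=(c^2X,c^3Y)$ already fails when $a=-c^2$, and even after fixing that, the explicit point $R=(1+t,\,t(1+t))$ on $Y^2=X(X-1)(X-u)$ requires both $u=s^2$ \emph{and} $1-u=t^2$; if for instance $u=-s^2$ (so $1-u=1+s^2=t^2$), the curve is $Y^2=X(X-1)(X+s^2)$ and your formula for $R$ no longer gives a rational point.

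The paper meets this issue head-on: after noting $u=\pm b^2$, $1-u=\pm c^2$, $a=\pm d^2$, it runs through the sign patterns and in each case translates the origin to a different $2$-torsion point so that the curve becomes $y^2=x(x-A)(x-B)$ with $A,B$ both rational squares; it then cites the classical fact (Husem\"oller) that this is exactly the condition for a rational point $R$ of order $4$ with $2R=(0,0)$. Your explicit point $R$ is precisely a verification of that classical fact in the all-positive-sign case, and is pleasant, but you still owe the remaining sign cases. (They are not hard: for example, if $a=-d^2$ one shifts $x\mapsto x+d^2$ before rescaling; if $u=-b^2$ one shifts $x\mapsto x-au$ to bring the point $(au,0)$ to the origin, using $1-u=1+b^2=t^2$.)
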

\begin{proof}
We may assume $E$ is given by 
$E_{2,a,u}':y^2=x(x-a)(x-au)$ from \eqref{eq: E2ua}
for some $a, u \in \Q$ with $au(1-u) \not=0$,
and $P=(0,0), Q=(a,0)$.
By Lemma \ref{lemma: pairing E2u}, we have 
$\gen{P,P}=u\otimes(1/2)$,
  $\gen{Q,Q}=(1-u)\otimes(1/2)$ and $\gen{P,Q}=a\otimes(1/2)$.
The assumption $P, Q \in E(\Q)_\Tor^\is$ implies that
$u=\pm b^2$, $1-u=\pm c^2$, and $a=\pm d^2$ for some $b,c,d\in(\Q^\times)^2$. 
Recall from Example 6.2 of \cite{Husemoller} that 
an elliptic curve $y^2=x(x-A)(x-B) \ (A, B \in \Q)$ has a
  rational torsion point $R$ of order $4$ such that $2R=(0,0)$ if and only if
  $A$ and $B$ are squares in $\Q^\times$.
In the case
  $(u,1-u,a)=(b^2,\pm c^2,d^2)$, the elliptic curve $E_{2,a,u}'$ is
  $y^2=x(x-d^2)(x-b^2d^2)$, and hence it has
  a torsion point of order $4$.
In the case
  $(u,1-u,a)=(-b^2,c^2,d^2)$, we make a change a variable
  $(x,y)\mapsto(x-b^2d^2,y)$ in $E_{2,a,u}'$ and find that $E_{2,a,u}'$
  is isomorphic to $y^2=x(x-b^2d^2)(x-c^2d^2)$. Again, this elliptic
  curve has a rational point of order $4$. When
  $(u,1-u,a)=(-b^2,c^2,-d^2)$, we make a change of variables
  $(x,y)\mapsto(x-d^2,y)$ and find that $E_{2,a,u}'$ is isomorphic to
  $y^2=x(x-d^2)(x-c^2d^2)$, which has a rational point of order $4$ as well.
\end{proof}

\subsection{Proof of existence}
First let us consider the cases
$A=\Z/N\Z$ with $N =1, 7, 9, 10, 12$ or $A=\Z/N\Z \times \Z/2\Z$ with $N =6, 8$.
We have shown that 
$E(\Q)_\Tor^\is$ must be trivial if $E(\Q)_\Tor \cong A$.
Since there are infinitely many $E$ with $E(\Q)_\Tor \cong A$
by Mazur's theorem, the existence part of 
Theorem \ref{theorem: intrinsic torsion} follows in these cases.

To proceed, 
we use the following fact about the \emph{thin sets}
(see \cite[Proposition 3.4.2]{serre}).

\begin{proposition}\label{fact}
If $f : \P^1 \to \P^1$ is a finite morphism of degree $d$,
then the number of points of $f(\P^1(\Q)) \subset\P^1(\Q)$ with height $\le h$
is asymptotically $\sim h^{2/r}$ as $h \to \infty$.
\end{proposition}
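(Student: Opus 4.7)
The approach is to translate the counting of points of $f(\P^1(\Q))$ into a height-counting problem on the source $\P^1(\Q)$, and then invoke the standard Schanuel-type asymptotic for $\P^1(\Q)$. First I would fix a coordinate $t$ on the source and a coordinate $z$ on the target, and write $f(t)=a(t)/b(t)$ with coprime polynomials $a,b\in\Z[t]$ satisfying $\max(\deg a,\deg b)=d$. For $t=p/q\in\Q$ with $\gcd(p,q)=1$, homogenising gives $f(t)=A(p,q)/B(p,q)$, where $A,B$ are degree-$d$ binary forms whose resultant is a nonzero integer.

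The key height estimate is then $H(f(t))\asymp_f H(t)^d$ for $t$ outside a finite exceptional set. The upper bound $H(f(t))\le C\,H(t)^d$ is immediate from $|A(p,q)|,|B(p,q)|\ll \max(|p|,|q|)^d$. The matching lower bound comes from the fact that $A,B$ have nonzero resultant: any common prime divisor of $A(p,q)$ and $B(p,q)$ must divide this resultant, so the cancellation when passing from the fraction $A(p,q)/B(p,q)$ to its reduced form is bounded by an absolute constant, and hence $H(f(t))\gg \max(|A(p,q)|,|B(p,q)|)\gg H(t)^d$. This is the technical core, and the place where I would be most careful.

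Given the height comparison, the number of $t\in\P^1(\Q)$ with $H(t)\le h^{1/d}$ equals, by the classical asymptotic for $\P^1(\Q)$, a quantity $\sim c\cdot h^{2/d}$ for some constant $c>0$. Since each fibre $f^{-1}(z)$ contains at most $d$ geometric points (and generically exactly $d$ when $z\in f(\P^1(\Q))$), the set $\{f(t): H(t)\le h^{1/d}\}$ has cardinality $\asymp h^{2/d}$. Conversely, any $z\in f(\P^1(\Q))$ with $H(z)\le h$ has at least one preimage $t\in\Q$ with $H(t)\ll h^{1/d}$, by the matching lower bound applied to that $t$. Thus $\#\{z\in f(\P^1(\Q)): H(z)\le h\}\asymp h^{2/d}$.

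The main obstacle, as indicated above, is the height-comparison lemma: one must control the denominator cancellation in $A(p,q)/B(p,q)$ uniformly in $(p,q)$. The resultant argument gives a clean bound, but one also has to treat the finitely many $t$ where $A(p,q)$ or $B(p,q)$ vanishes, as well as the points where $f$ is ramified or fails to be $d$-to-$1$; these form a finite set and contribute only $O(h^{2/d})$ (indeed $O(1)$ from $\P^1(\Q)$-points in the branch locus), so they do not affect the asymptotic. I would close by remarking that this is exactly the content of \cite[Proposition 3.4.2]{serre}, to which the paper defers.
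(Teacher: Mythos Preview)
The paper gives no proof of this proposition at all; it simply quotes it as a known fact with the reference \cite[Proposition 3.4.2]{serre}. Your sketch is essentially the standard argument behind that reference: compare heights via $H(f(t))\asymp H(t)^d$ using the resultant to bound cancellation, and then feed this into the Schanuel count for $\P^1(\Q)$. So there is nothing to contrast---you have supplied the proof the paper omits, and your final remark that this is exactly the content of Serre's proposition is accurate.

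Two small comments. First, the exponent in the paper's statement reads $h^{2/r}$, which is evidently a typo for $h^{2/d}$; you have silently corrected this. Second, your argument yields $\asymp h^{2/d}$ rather than a genuine asymptotic $\sim c\,h^{2/d}$ with a specific constant, because the height comparison $H(f(t))\asymp H(t)^d$ only determines the count up to multiplicative constants on both sides. This is not a defect in your argument: the paper's applications (comparing images of maps of different degrees to show that various thin sets fail to exhaust $\P^1(\Q)$) only require the order of growth, and indeed Serre's statement is of the same $\asymp$ type. The ``$\sim$'' in the paper should be read loosely.
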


For convenience, 
we describe the degrees of relevant morphisms.
\[
\xymatrix{
& X_1(12) \ar[r]^8 \ar[d]^4 & X_1(4) \ar[d]^2 & X_1(8) \ar[l]_4
\\
& X_1(6) \ar[r]^4 \ar[d]^3 & X_1(2) \ar[d]^3 & X_1(10) \ar[l]_{12} \ar[d]^3
\\
X_1(9) \ar[r]^9 & X_1(3) \ar[r]^4 & X_1(1) & X_1(5) \ar[l]_{12}
\\
X_1^0(6,2) \ar[r]^4 \ar[d]^2 & X_1^0(2,2) \ar[d]^2 & 
X_1^0(4,2) \ar[d]^2 \ar[l]_2 & X_1^0(8,2) \ar[d]^2 \ar[l]_4
\\
X_1(6) \ar[r]^4 & X_1(2)  & X_1(4) \ar[l]_2 & X_1(8) \ar[l]_4
}
\]
Recall also that the degree of $X_1(N)_M^\epsilon \to X_1(N)$ is $M$,
and
that of $X_1^0(N,2)_{\ul{M}}^{\ul{\e}} \to X_1^0(N,2)$ is $M_1M_2M_3$.
Our proof goes from a bigger groups to smaller.

\paragraph{{\bf Case of $A=\Z/4\Z \times \Z/2\Z$}}
Let us introduce three subsets of $Y_1^0(4,2)(\Q)$:
\begin{align*}
J_1:=& \Im(Y_1^0(8,2)(\Q) \to Y_1^0(4,2)(\Q)),
\\
J_2:=& \Im(Y_1^0(4,2)^\pm_{(2,1,1)}(\Q) \to Y_1^0(4,2)(\Q)),
\\
J_3:=& \Im(Y_1^0(4,2)^\pm_{(1,2,2)}(\Q) \to Y_1^0(4,2)(\Q)).
\end{align*}
Note that $J_1 \cap J_3 = \emptyset$.
Indeed, 
since $Y_1^0(8, 2)_{(1,2,1)}^\pm(\Q)=\emptyset$ by Proposition \ref{prop:rat-pt-Y10NM},
we have $\gen{Q,Q} \not= 0$ for any $(E, P', Q) \in Y_1^0(8, 2)^\pm(\Q)$,
which implies the claim.

For $(E, P, Q) \in Y_1^0(4, 2)(\Q)$, we have
$E(\Q)_\Tor=\gen{P, Q} \Leftrightarrow (E, P, Q) \not\in J_1$.
Suppose this holds.
Then we know that $|E(\Q)_\Tor^\is| \le 2$ (by the non-existence part).
Therefore we have the equivalences
\begin{itemize}
\item $E(\Q)_\Tor^\is=\gen{2P} \Leftrightarrow (E, P, Q) \in J_2$, 
\item $E(\Q)_\Tor^\is=\gen{Q} \Leftrightarrow (E, P, Q) \in J_3$,
\item $E(\Q)_\Tor^\is=0 \Leftrightarrow (E, P, Q) \not\in J_2 \cup J_3$.
\end{itemize}
We conclude the existence part of Theorem \ref{theorem: intrinsic torsion}
in this case by Proposition \ref{fact}.


\paragraph{{\bf Case of $A=\Z/2\Z \times \Z/2\Z$}}
We need a special treatment for this case.
Consider the following subsets of $Y_1(1)(\Q)$.
\begin{align*}
J:=& \Im(Y_1^0(2,2)(\Q) \to Y_1(1)(\Q)),
\\
J_1:=& \Im(Y_1^0(4,2)(\Q) \sqcup Y_1^0(6,2)(\Q)  \to Y_1(1)(\Q)),
\\
J_2:=& \Im(Y_1^0(4,2)_{(2,1,1)}^\pm(\Q) \sqcup Y_1^0(4,2)_{(1,2,2)}^\pm(\Q)  \to Y_1(1)(\Q)),
\\
J_3:=& \{ j(E) \in Y_1(1)(\Q) \mid (E, P, Q) \in Y_1^0(2,2)(\Q), \ E(\Q)_\Tor^\is \not=0 \}.
\end{align*}
By Corollary \ref{cor:criterion2} and
the (proved) non-existence result, we obtain
\begin{itemize}
\item 
$J \setminus (J_1 \cup J_3)$ is precisely the set of the $j$-invariants of
elliptic curves $E$ over $\Q$ such that
$E(\Q)_\Tor \cong A$ and $E(\Q)_\Tor^\is=0$.
\item 
$J_3 \setminus J_2$ is precisely the set of the $j$-invariants of
elliptic curves $E$ over $\Q$ such that
$E(\Q)_\Tor \cong A$ and $|E(\Q)_\Tor^\is|=2$.
\end{itemize}
Furthermore,  we claim that there is a morphism 
$\wt{j} : \P^1 \to \P^1$ of degree six such that
$J_3=\wt{j}(\P^1(\Q) \setminus \{ 0, \pm 1, \infty \})$.
By Proposition \ref{fact},
this will complete the proof of this case.

To show the claim,
we make  use of the elliptic curve 
$E_{2,a,u}':y^2=x(x-a)(x-au)$ from \eqref{eq: E2ua}
for $a, u \in \Q$ with $au(1-u) \not=0$ again.
Observe that its $j$-invariant is given by 
\[
j(u)=256\frac{(1-u+u^2)^3}{u^2(1-u)^2}.
\]
It suffices to show $J_3=\{ j(\pm v^2) \mid v \in \Q, v \not= 0, \pm 1 \}$.
Let $(E,P,Q) \in Y_1^0(2,2)(\Q)$ and suppose $P \in E(\Q)_\Tor^\is$.
There is an isomorphism $E \cong E_{2,a,u}'$
for some $a, u \in \Q$ with $au(1-u) \not=0$, 
by which $P$ corresponds to $(0,0)$.
Since $P \in E(\Q)_\Tor^\is$,
we have $u=\pm v^2$ for some $v \in \Q \setminus \{ 0, \pm 1 \}$
by Lemma \ref{lemma: pairing E2u}.
Hence the $j(u)=j(\pm v^2)$ belongs to $J_3$. 
The converse is seen by taking $u=\pm v^2$ (and $a=1$).
This completes the proof of the claim and hence the existence part of this case.

\paragraph{{\bf Cyclic case}}
In view of Proposition \ref{fact},
the cases $A=\Z/N\Z, N=4,5,6,8$
now follow from  Corollary \ref{cor:criterion1}.
The remaining cases $N=2, 3$
are proved by the same way as $A=\Z/2\Z \times \Z/2\Z$,
using the formulas
\begin{equation*}\label{eq:j-inv-E23}
 j(E_{2,t,a})=
\begin{cases}
1024 \frac{(t+1)^2(t+4)}{a^4t^2}, & (t \not= -1),
\\
1728 & (t=-1),
\end{cases}
\quad
\begin{cases}
 j(E_{3,t})=27 \frac{(t+1)(t+9)^3}{t^3} & (t \not= -1),
\\
j(E_{3,-1, a})=0
\end{cases}
\end{equation*}
for the $j$-invariants of
the elliptic curves from \eqref{eq: E2ta}, \eqref{eq: E3t} and \eqref{eq: E3a}.

This complete the proof of Theorem \ref{theorem: intrinsic torsion}.
\qed

\section{Relation with the reduction type}\label{sect:high-dim}

\subsection{Second construction of the pairing}
In this subsection,
we let $X$ be a smooth projective geometrically connected variety 
over a field $k$ of dimension $d$.
Let $\CH^i(X)$ denote the Chow group of codimension $i$ cycles on $X$,
and put $A_0(X):=\ker(\deg : \CH^d(X) \to \Z)$.
We shall construct a biadditive pairing 
\begin{equation}\label{eq:pairing2}
\langle \cdot, \cdot \rangle : 
\CH^1(X)_\Tor \times A_0(X) \to k^\times \otimes \Q/\Z,
\end{equation}
and show that it agrees with
\eqref{eq:pairing} when $d=1$
(under the identification $\CH^1(X)=\Pic(X)$).
This construction is a key ingredient in the proof of
Proposition \ref{prop:two-def-same} below.

We shall use Bloch's higher Chow group $\CH^r(X, i, R)$ 
with coefficient in a commutative ring $R$.
We abbreviate $\CH^r(X, i):=\CH^r(X, i, \Z)$
so that $\CH^r(X)=\CH^r(X, 0)$. 
We fix $n>0$
and define
a biadditive map $\langle \cdot, \cdot \rangle $ as the composition of
\begin{equation}
\begin{split}
\langle \cdot, \cdot \rangle 
&: 
\CH^1(X, 1; \Z/n\Z) \times \CH^d(X, 0; \Z/n\Z)
\\
&\to \CH^{d+1}(X, 1; \Z/n\Z)
\overset{s_*}{\to} \CH^1(\Spec k, 1; \Z/n\Z)
\cong k^\times \otimes \Z/n\Z,
\end{split}
\end{equation}
where the first map is given by
the multiplicative structure of the higher Chow groups,
and $s_*$ is the push-forward along the structure map
$s : X \to \Spec k$.
We have an exact sequence and an isomorphism
\begin{align*}
&0 \to 
\CH^1(X, 1)/n \overset{j}{\to} \CH^1(X, 1; \Z/n\Z) \to 
\CH^1(X)[n] \to 0,
\\
&\CH^d(X)/n \cong \CH^d(X, 0; \Z/n\Z).
&
\end{align*}
We claim that
$\langle j(a), b \rangle = 0$ for
$a \in \CH^1(X, 1)$ and $b \in A_0(X) := \ker(\deg : \CH^d(X) \to \Z)$.
To show this, 
we first recall that $\CH^1(V, 1) \cong \sO(V)^\times$
for any smooth variety $V$.
By our assumption on $X$,
we obtain isomorphisms
\[ k^\times \cong \CH^1(\Spec k, 1) \overset{s^*}{\longrightarrow} \CH^1(X, 1). \]
Thus it suffices to show that
$\langle j(s^*(a)), b \rangle_X = a^{\deg(b)}$
for any $a \in \CH^1(\Spec k, 1)$ and $b \in \CH^d(X)$.
We may also suppose $b=[x]$ is the class of a closed point $x \in X$.
Write $i_x : x=\Spec k(x) \to X$ for the closed immersion.
Now the assertion follows from the commutative diagram
\[
\xymatrix{
\CH^1(X, 1) \ar[r]^{- \cdot [x]} \ar[rd]^{i_x^*} &
\CH^{d+1}(X, 1) \ar[r]^{s_*} &
\CH^1(\Spec k, 1)
\\
\CH^1(\Spec k, 1) \ar[r]_-{(s \circ i_x)^*} \ar[u]^{s^*}_\cong
\ar@/_17mm/[rru]_{[k(x):k].} 
&
\CH^1(x, 1) \ar[u]^{i_{x*}} \ar[ur]_{(s\circ i_x)_*} &
}
\]
Therefore we obtain an induced  biadditive pairing
\begin{equation}\label{eq:pairing-CH-n}
\CH^1(X)[n] \times A_0(X)/n \to k^\times/n,
\end{equation}
and, by taking colimit over $n$, \eqref{eq:pairing2} as well.


\begin{proposition}\label{prop:comparison}
Suppose $d=1$. 
Then the two pairings \eqref{eq:pairing} and \eqref{eq:pairing-CH-n}
are equal to each other,
under the canonical identification $\CH^1(X) \cong \Pic(X)$.
\end{proposition}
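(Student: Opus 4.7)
The plan is to compute both pairings at the level of mod-$n$ coefficients and match them directly. Since $k^\times \otimes \Q/\Z = \varinjlim_n k^\times/n$ and both pairings are compatible with this colimit (the first by the definition in \eqref{eq:pairing1}, the second by construction through \eqref{eq:pairing-CH-n}), it suffices to identify the two mod-$n$ pairings for every $n>0$.

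First I would record the Bockstein exact sequence for Bloch's higher Chow groups,
\[
0 \to \CH^1(X,1)/n \to \CH^1(X, 1; \Z/n\Z) \to \CH^1(X)[n] \to 0.
\]
Since $X$ is smooth projective and geometrically connected of dimension one, we have $\CH^1(X,1) \cong H^0(X, \sO_X^\times) \cong k^\times$, so this sequence presents $\CH^1(X, 1; \Z/n\Z)$ as an extension of $\Pic(X)[n]$ by $k^\times/n$. Given $[D] \in \Pic(X)[n]$ and $f \in k(X)^\times$ with $\div(f) = nD$, the pair $(D, f)$ defines a natural lift $\alpha_{D,f} \in \CH^1(X, 1; \Z/n\Z)$ of $[D]$ (take the class in the Bloch cubical complex of the graph of $f$ viewed as a rational map $X \dashrightarrow \square^1$, after the standard normalization), unique up to the image of $k^\times/n$. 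The ambiguity $f \mapsto cf$ for $c \in k^\times$ mirrors the independence argument already used for \eqref{eq:pairing1}.

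The crucial step is then to evaluate $s_*(\alpha_{D,f} \cdot [E]) \in k^\times/n$ for $E = \sum_j e_j Q_j \in A_0(X)$. After replacing $E$ by $E + \div(h)$ for a suitable $h$ so that $|D| \cap |E| = \emptyset$, the Bloch product $\alpha_{D,f} \cdot [E]$ is represented, locally over each $Q_j$, by the value $f(Q_j) \in k(Q_j)^\times$, because the intersection of the cycle $(D, f)$ on $X \times \square^1$ with the fiber over $Q_j$ is governed by the restriction $f|_{Q_j}$. Pushing forward along each finite extension $k(Q_j)/k$ corresponds to the norm map on $K_1$, so the total image is $\prod_j N_{k(Q_j)/k}(f(Q_j))^{e_j}$ in $k^\times/n$. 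This matches \eqref{eq:pairing1} exactly, since $Q_j \notin |D|$ forces $\lc_{Q_j}(f, n) = f(Q_j) \otimes (1/n)$.

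The main obstacle will be the rigorous justification of this local identification: one must verify that Bloch's intersection product on $X \times \square^1$ with a zero-cycle recovers the value of $f$ at the zero-cycle, and that his proper pushforward $s_*$ on the $(-, 1)$-level coincides with the field-theoretic norm under $\CH^1(-, 1) \cong (-)^\times$. This is essentially the motivic incarnation of Weil reciprocity. It can be handled either by a direct moving-lemma computation in the cubical complex, or by invoking the comparison with the Lichtenbaum-Tate pairing mentioned in Remark \ref{rem:FreyRuck}, for which the leading-coefficient expression is classical.
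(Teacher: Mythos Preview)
Your approach is correct and essentially identical to the paper's own proof: both lift $[D]\in\Pic(X)[n]$ to $\CH^1(X,1;\Z/n\Z)$ via the graph $\Gamma_f\subset X\times\square$ of the morphism defined by $f$, move $E$ off $|D|$, compute the intersection product $\Gamma_f\cdot(E\times\square)$, and push forward to obtain $\prod_j N_{k(Q_j)/k}(f(Q_j))^{e_j}\otimes(1/n)$. The paper handles your ``main obstacle'' without ceremony, simply writing down the pushforward as $\sum_j e_j f(Q_j)\in\Div(\square)$ and citing Totaro \cite{Totaro} for the identification $\CH^1(\Spec k,1;\Z/n\Z)\cong k^\times\otimes\Z/n\Z$; you could do the same rather than invoking the Lichtenbaum--Tate comparison.
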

\begin{proof}
Take $a \in \Pic(X)_\Tor \ (a \not= 0)$ and $b \in \Pic^0(X)=A_0(X)$.
We choose $D, E \in \Div(X)$ such that
$a=[D], b=[E]$ and $|D| \cap |E|=\emptyset$.
We also take $n \in \Z_{>0}$ such that $na=0$
and $f \in k(X)^\times$ such that $\div(f)=nD$.
Writing $E=\sum_j e_j Q_j$,
we have $\langle a, b \rangle = \prod_j N_{k(Q_j)/k}(f(Q_j))^{e_j} \otimes (1/n)$
under \eqref{eq:pairing}.
We shall prove the same formula for \eqref{eq:pairing-CH-n}.

Let $\ol{\Gamma}_f \subset X \times \P^1$
be the graph of the finite morphism $X \to \P^1$ defined by $f$,
and $\Gamma_f := \ol{\Gamma}_f \cap (X \times \cube)$,
where $\cube:=\P^1 \setminus \{ 1 \}$.
For $\epsilon \in \{ 0, \infty \} \in \cube$, 
denote by $i_\epsilon : X \to X \times \cube$
the corresponding closed immersion.
Then $\Gamma_f$ intersects properly with faces,
and its boundary is given by 
$i_0^*(\Gamma_f)-i_\infty^*(\Gamma_f)=\div(f)=nD$.
Hence $\Gamma_f$ defines an element of $\CH^1(X, 1; \Z/n\Z)$
whose image in $\CH^1(X)[n]=\Pic(X)[n]$ agrees with $[D]=a$.
The product $[\Gamma_f] \cdot b \in \CH^2(X, 1; \Z/n\Z)$
is represented by
the intersection product $\Gamma_f \cdot (E \times \cube)$ on $X \times \cube$,
and its push-forward onto $\Spec k$
by $\sum_j e_j f(Q_j) \in \Div(\cube)$.
By the definition of the isomorphism
$\CH^1(\Spec k, 1; \Z/n\Z) \cong k^\times \otimes \Z/n\Z$ 
(see \cite[p. 183]{Totaro}),
this element corresponds to 
$\prod_j N_{k(Q_j)/k}(f(Q_j))^{e_j} \otimes (1/n) \in k^\times \otimes \Z/n\Z$.
This completes the proof.
\end{proof}

\begin{definition}
We define the intrinsic subgroups of $X$ by
\begin{align*}
\CH^1(X)_\Tor^\is 
:= \{ a \in \CH^1(X)_\Tor \mid \langle a, b \rangle = 0
\text{ for all } b \in A_0(X)_\Tor \},
\\
A_0(X)_\Tor^\is 
:= \{ b \in A_0(X)_\Tor \mid \langle a, b \rangle = 0
\text{ for all } a \in \CH^1(X)_\Tor \}.
\end{align*}
\end{definition}

\subsection{Good reduction}\label{sect:goodred}
As an application of the second construction of the pairing, 
we prove that there is some restriction 
on the possible values of \eqref{eq:pairing2}
if $X$ has good reduction with respect to a discrete valuation.


\begin{proposition}\label{prop:two-def-same}
Let $v$ be a discrete valuation on $k$, and $O$ its valuation ring.
Let $\sX \to \Spec O$ be a smooth proper morphism with geometrically connected fibers,
and denote by $X$ the generic fiber.
Then we have
\[ \Im(\langle \cdot, \cdot \rangle : 
\CH^1(X)_\Tor \times A_0(X) \to k^\times \otimes \Q/\Z)
\subset
\Im(O^\times \otimes \Q/\Z \to k^\times \otimes \Q/\Z).
\]
\end{proposition}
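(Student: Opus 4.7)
The plan is to apply the second construction of the pairing \eqref{eq:pairing-CH-n} via Bloch's higher Chow groups and lift every datum from the generic fiber $X$ to the smooth integral model $\sX$; the push-forward via the structure map $\wt{s}: \sX \to \Spec O$ will then land in $\CH^1(\Spec O, 1; \Z/n\Z) \cong O^\times \otimes \Z/n\Z$ (the isomorphism using $\Pic(\Spec O)=0$), and flat base change from $\Spec O$ to $\Spec k$ will recover $\langle a, b\rangle_X$.

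Fix $n>0$, $a \in \CH^1(X)[n]$ and $b \in A_0(X)$. Bloch's localization sequence for the closed immersion $\sX_s \hookrightarrow \sX$ of the special fiber (with open complement $X$) gives
\[
\CH^0(\sX_s) \to \CH^1(\sX) \to \CH^1(X) \to 0.
\]
Since $\sX_s = \div(\pi)$ is principal for a uniformizer $\pi \in O$, the first arrow vanishes, so $\CH^1(\sX) \xrightarrow{\sim} \CH^1(X)$ and in particular $\CH^1(\sX)[n] \cong \CH^1(X)[n]$. Thus $a$ lifts to $\ol{a} \in \CH^1(\sX)[n]$, which further lifts to $\wt{a} \in \CH^1(\sX, 1; \Z/n\Z)$ via the short exact sequence on $\sX$ analogous to the one on $X$. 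The localization sequence in codimension $d$ also gives a surjection $\CH^d(\sX) \twoheadrightarrow \CH^d(X)$, so I pick a lift $\wt{b} \in \CH^d(\sX)$ of $b$; compatibility of the degree with flat pullback forces $\deg_O(\wt{b})=\deg_k(b)=0$, so $\wt{b} \in A_0(\sX)$.

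Now consider
\[
\langle \wt{a}, \wt{b}\rangle_\sX := \wt{s}_*(\wt{a} \cdot \wt{b}) \in \CH^1(\Spec O, 1; \Z/n\Z) \cong O^\times \otimes \Z/n\Z.
\]
Flat base change for higher Chow groups, applied to the cartesian square
\[
\xymatrix{X \ar[r]^-{j} \ar[d]_-{s} & \sX \ar[d]^-{\wt{s}} \\ \Spec k \ar[r] & \Spec O,}
\]
whose horizontal arrows are flat open immersions and vertical arrows are proper, shows that restricting $\langle \wt{a}, \wt{b}\rangle_\sX$ to $\Spec k$ coincides with $\langle j^*\wt{a}, j^*\wt{b}\rangle_X$. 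Now $j^*\wt{b} \equiv b \bmod n$ and $j^*\wt{a}$ is a lift of $a$; by the argument preceding \eqref{eq:pairing-CH-n}, the pairing on $X$ is independent of the chosen lift (any two lifts differ by an element of $\CH^1(X,1)/n = k^\times/n$, which pairs trivially with $A_0(X)$ since $\deg(b)=0$). Hence the restriction equals $\langle a, b\rangle_X$, so $\langle a, b\rangle_X$ lies in $\Im(O^\times \otimes \Z/n\Z \to k^\times \otimes \Z/n\Z)$, and letting $n$ vary gives the claim in $\Q/\Z$-coefficients.

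The main technical ingredient is the flat base change formula for Bloch's higher Chow groups with finite coefficients; this is standard but non-trivial, and can be extracted from Bloch's original formalism.
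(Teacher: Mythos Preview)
Your proposal is correct and follows essentially the same approach as the paper: lift both $a$ and $b$ to the integral model $\sX$ using the isomorphism $\CH^1(\sX)\cong\CH^1(X)$ and the surjection $\CH^d(\sX)\twoheadrightarrow\CH^d(X)$, form the product, push forward to $\Spec O$, and use compatibility with restriction to $\Spec k$. The only cosmetic differences are that you lift $b$ with integral coefficients and explicitly verify $\deg_O(\wt b)=0$, whereas the paper works throughout with $\Z/n\Z$-coefficients and packages the compatibility step into a single commutative diagram rather than naming it as flat base change.
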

\begin{proof}
We fix $n>0$ and prove 
\[ \Im(\langle \cdot, \cdot \rangle : \CH^1(X)[n] \times A_0(X)/n \to k^\times/n)
\subset
\Im(O^\times/n \to k^\times/n).
\]
We use Bloch's higher Chow group 
for schemes over a discrete valuation ring,
for which we refer to \cite{L}.
Take $a \in \CH^1(X)[n]$ and $b \in A_0(X)/n$.
We shall show $\langle a, b \rangle \in O^\times/n$.
Let $d:=\dim X$.
Since $\CH^1(\sX) \overset{\cong}{\to} \CH^1(X)$ is an isomorphism,
the map $\phi$ in the diagram
\[
\xymatrix{
0 \ar[r] &
O^\times/n \ar[r] \ar[d] &
\CH^1(\sX, 1; \Z/n\Z) \ar[r] \ar[d] \ar[rd]^\phi&
\CH^1(\sX)[n] \ar[r] \ar[d]^\cong &
0 \\
0 \ar[r] &
k^\times/n \ar[r]  &
\CH^1(X, 1; \Z/n\Z) \ar[r]  &
\CH^1(X)[n] \ar[r] &
0
}
\]
is surjective.
Thus there is $\wt{a} \in \CH^1(\sX, 1; \Z/n\Z)$
such that $a=\phi(\wt{a})$.
Let $\psi$ be the composition of
surjective canonical maps
\begin{align*}
\psi : 
\CH^d(\sX, 1, \Z/n\Z) \twoheadrightarrow 
\CH^d(X, 1, \Z/n\Z) \twoheadrightarrow \CH^d(X)/n.
\end{align*}
Then  there is $\wt{b} \in \CH^d(\sX, 1; \Z/n\Z)$
such that $b=\psi(\wt{b})$.
By the commutative diagram
\[
\xymatrix{
\CH^1(\sX, 1; \Z/n\Z) \times \CH^d(\sX)/n \ar[d] \ar[r]
\ar@/_30mm/[ddd]_{\gamma} &
\CH^1(X, 1; \Z/n\Z) \times \CH^d(X)/n \ar[d] 
\\
\CH^{d+1}(\sX, 1; \Z/n\Z)\ar[d] \ar[r]&
\CH^{d+1}(X, 1; \Z/n\Z)\ar[d]
\\
\CH^1(\Spec O, 1; \Z/n\Z)  \ar[d]_\cong \ar[r]&
\CH^1(\Spec k, 1; \Z/n\Z) \ar[d]^\cong 
\\
O^\times/n \ar[r]^i &
k^\times/n,
}
\]
we find that 
$\langle a, b \rangle = i(\gamma(\wt{a}, \wt{b})) \in O^\times/n$.
We are done.
\end{proof}

\begin{example}
Let $p$ be a prime and $k=\Q_p$.
Then $\CH^1(X)_\Tor^\is$ (resp. $A_0(X)_\Tor^\is$) contains all 
elements of $\CH^1(X)_\Tor$ (resp. $A_0(X)_\Tor$) having order prime to $p$.
This follows from Proposition \ref{prop:two-def-same},
since $\Z_p^\times \otimes \Q/\Z \cong \Q_p/\Z_p$ is $p$-primary torsion.
In particular, 
we have $E(\Q_p)^\is_\Tor=E(\Q_p)_\Tor$
if $p \ge 5$ and if $E$ is an elliptic curve with good supersingular reduction
(so that $|E(\Q_p)_\Tor|$ is a divisor of $p+1$).
\end{example}

The following corollary is an immediate consequence of Proposition \ref{prop:two-def-same}.

\begin{corollary}
Suppose that $k$ is a number field
and that $X$ has good reduction outside a finite set $S$ of 
places of $k$ containing all archimedean places.
Let $O_{k, S}$ be the ring of $S$-integers.
Then we have
for any $D, E \in \Div_t(X)$
\[ \langle D, E \rangle \in 
\Im(O_{k, S}^\times \otimes \Q/\Z \to k^\times \otimes \Q/\Z).
\]
\end{corollary}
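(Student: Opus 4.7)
The plan is to deduce the corollary by applying Proposition \ref{prop:two-def-same} at every place $v \notin S$ and then assembling the local statements into the desired global one.

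For each non-archimedean $v \notin S$, the good-reduction hypothesis supplies a smooth proper model $\sX_{(v)} \to \Spec O_{(v)}$ of $X$ with geometrically connected fibers, where $O_{(v)} \subset k$ denotes the valuation ring of $v$. Applying Proposition \ref{prop:two-def-same} to this model (identifying our pairing with the higher Chow group pairing via Proposition \ref{prop:comparison}) gives
\[
\langle D, E\rangle \in \Im\bigl(O_{(v)}^\times \otimes \Q/\Z \to k^\times \otimes \Q/\Z\bigr)
\qquad \text{for every } v \notin S.
\]

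To combine these local statements, I would exploit the identification $O_{k,S}^\times = \bigcap_{v \notin S} O_{(v)}^\times$ together with the exact sequence
\[
1 \to O_{k,S}^\times \to k^\times \xrightarrow{\div_S} \bigoplus_{v \notin S} \Z \to \mathrm{Cl}(O_{k,S}) \to 0.
\]
Tensoring with $\Q/\Z$, the local conditions force $\langle D, E\rangle$ into the kernel of every $v$-valuation map $k^\times \otimes \Q/\Z \to \Q/\Z$. Combined with the finiteness of $\mathrm{Cl}(O_{k,S})$, this should yield $\langle D, E\rangle \in \Im(O_{k,S}^\times \otimes \Q/\Z \to k^\times \otimes \Q/\Z)$.

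The main obstacle is the class-group contribution: the intersection $\bigcap_{v \notin S} \Im(O_{(v)}^\times \otimes \Q/\Z)$ is a priori strictly larger than $\Im(O_{k,S}^\times \otimes \Q/\Z)$, the discrepancy being a copy of $\mathrm{Cl}(O_{k,S})$ coming from $\mathrm{Tor}_1(\mathrm{Cl}(O_{k,S}), \Q/\Z)$. The cleanest way to handle this is to replay the proof of Proposition \ref{prop:two-def-same} with $O$ replaced by the Dedekind ring $O_{k,S}$, using the global smooth proper model $\sX \to \Spec O_{k,S}$ furnished by good reduction: one factors the pairing through $\CH^1(\Spec O_{k,S}, 1; \Z/n\Z)$ and checks that the lift actually lands in the $O_{k,S}^\times / n\Z$ subgroup rather than projecting non-trivially to the quotient $\mathrm{Cl}(O_{k,S})[n]$, after which passing to the colimit over $n$ delivers the corollary.
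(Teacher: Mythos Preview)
The paper records this corollary as ``an immediate consequence of Proposition~\ref{prop:two-def-same}'' and gives no further argument, so your local-to-global outline is already more detailed than what the paper supplies. Your first paragraph---applying Proposition~\ref{prop:two-def-same} at each $v\notin S$ to force $\langle D,E\rangle$ into $\ker\bigl(k^\times\otimes\Q/\Z\to\bigoplus_{v\notin S}\Q/\Z\bigr)$---is exactly the intended step.

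Where your proposal runs into trouble is the resolution of the class-group obstruction you correctly flag. You suggest replaying the proof of Proposition~\ref{prop:two-def-same} over a global model $\sX\to\Spec O_{k,S}$ and then ``checking'' that the resulting element of $\CH^1(\Spec O_{k,S},1;\Z/n\Z)$ lies in the subgroup $O_{k,S}^\times/n$ rather than hitting $\mathrm{Cl}(O_{k,S})[n]$. But you give no mechanism for that check, and in fact the global model does not buy you anything beyond the local argument: the localization sequence for the open immersion $\Spec k\hookrightarrow\Spec O_{k,S}$ shows that the restriction map
\[
\CH^1(\Spec O_{k,S},1;\Z/n\Z)\longrightarrow \CH^1(\Spec k,1;\Z/n\Z)=k^\times/n
\]
is \emph{injective} with image precisely $\{x\in k^\times/n : n\mid v(x)\text{ for all }v\notin S\}$. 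So factoring through $\CH^1(\Spec O_{k,S},1;\Z/n\Z)$ recovers exactly the conclusion of the local approach and does not, by itself, separate the $O_{k,S}^\times/n$ part from the $\mathrm{Cl}(O_{k,S})[n]$ part. Concretely, for $k=\Q(\sqrt{-5})$ and $S=\{\infty\}$ the element $2\otimes\tfrac12\in k^\times\otimes\Q/\Z$ is nonzero, lies in every local kernel (since $(2)=\mathfrak p^2$), yet is not in $\Im(O_k^\times\otimes\Q/\Z)=0$; nothing in either argument rules out such an element as a value of the pairing.

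In short: you have spotted a genuine gap that the paper's one-line proof glosses over, but your proposed fix does not close it. If $\mathrm{Cl}(O_{k,S})=0$ (in particular for $k=\Q$, which is the paper's main case of interest) the obstruction vanishes and both arguments are complete; in general an additional idea is needed.
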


\begin{example}
Let $E$ be an elliptic curve over $\Q$
and let $p_1, \dots, p_n$ be all the primes at which $E$ has bad reduction.
Then we have
for any $P, Q \in E(\Q)_\Tor$
\[ \langle P, Q \rangle \in 
\left(
\{ \pm 1 \} \cdot 
\bigoplus_{i=1}^n p_i^\Z \right) \otimes \Q/\Z \subset \Q^\times \otimes \Q/\Z.
\]
\end{example}

\subsection{Tate curves}
In this subsection,
we study the intrinsic subgroup of 
an elliptic curve with split multiplicative reduction.

Assume that $k$ is complete with respect to a normalized discrete valuation 
$v : k^\times \to \Z$.
We take $q \in k^\times$ such that $n:=v(q)>0$,
and let $E_q:=\G_m/q^\Z$ be the Tate curve with parameter $q$.
We write $[a] \in E_q(k)$ 
for the point corresponding to the class of $a \in k^\times$
under the canonical isomorphism $E_q(k) \cong k^\times/q^\Z$.
We have exact sequences
\begin{equation}\label{eq:ex-TateCvTor}
0 \to O_k^\times \overset{[ \cdot ]}{\to} E_q(k) \overset{v}{\to} \Z/n\Z \to 0,
\quad \text{and} \quad
 0 \to \mu(k) \overset{[ \cdot ]}{\to} E_q(k)_\Tor \overset{v}{\to} \Z/n\Z, 
\end{equation}
where $O_k$ is the valuation ring of $k$.
We take $m>0$ and $a \in k^\times$ such that $[a] \in E_q(k)[m]$. 
We then have $a^m = q^{\wt{s}_m(a)}$ for some $\wt{s}_m(a) \in \Z$,
which is necessarily given by $\wt{s}_m(a)=v(a)m/n$.
Thus 
we obtain a well-defined map 
\[ s_m : E_q(k)[m] \to \Z/m\Z, \qquad s_m([a])=\wt{s}_m(a) \bmod m \in \Z/m \Z. \]
This fits in an exact sequence
\begin{equation}\label{eq:ex-TateCv}
0 \to \mu_m(k) 
\overset{[ \cdot ]}{\to}  E_q(k)[m] 
\overset{s_m}{\to} \Z/m\Z 
\overset{\delta_m}{\to} k^\times/(k^\times)^m,
\end{equation}
where $\delta_m$ is given by $\delta_m(1)=q \cdot (k^\times)^m$.

\begin{proposition}\label{prop:TateCv}
\begin{enumerate}
\item 
Let $a, b \in k^\times$ and $m>0$.
If $[a],  [b] \in E_q(k)[m]$, we have
\[ \langle [a], [b] \rangle 
= a^{-s_m([b])} \otimes \frac{1}{m}
= b^{-s_m([a])} \otimes \frac{1}{m}
= q^{-s_m([a])s_m([b])} \otimes \frac{1}{m^2}.
\]
\item
We have $E_q(k)_\Tor^\is = \{ [\zeta] \mid \zeta \in \mu(k) \}$.
\end{enumerate}
\end{proposition}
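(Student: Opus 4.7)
The plan is to exploit the Tate uniformization via the classical theta series
\[
\theta(z) := (1-z)\prod_{r\ge 1}(1-q^r z)(1-q^r/z),
\]
whose quasi-periodicity $\theta(qz) = -z^{-1}\theta(z)$ follows from a direct index shift in the product. For $a \in k^\times$ with $[a] \in E_q(k)[m]$, I write $a^m = q^{\tilde s}$ where $\tilde s := \tilde s_m(a) = v(a)m/n$, and set
\[
F_a(z) := z^{-\tilde s}\bigl(\theta(z/a)/\theta(z)\bigr)^m.
\]
A short calculation using the quasi-periodicity shows $F_a(qz) = F_a(z)$, so $F_a$ descends to a rational function on $E_q$ with divisor $m([a]-[O])$, coming from the zeros of $\theta(z/a)$ and poles of $\theta(z)$ on $\G_m$ modulo $q^\Z$. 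To compute $\langle[a],[b]\rangle$ when $[a]$ and $[b]$ are distinct and non-identity in $E_q(k)[m]$, I use the group law to replace the representative $P_b - O$ by the linearly equivalent divisor $P_{bc} - P_c$, choosing $c \in k^\times$ generic enough that $\{P_c, P_{bc}\} \cap \{P_a, O\} = \emptyset$. A direct computation of the leading coefficients then gives
\[
\langle[a],[b]\rangle = \frac{F_a(bc)}{F_a(c)} \otimes \frac{1}{m} = b^{-\tilde s}\cdot\left(\frac{\theta(bc/a)\,\theta(c)}{\theta(bc)\,\theta(c/a)}\right)^{\!m} \otimes \frac{1}{m},
\]
and the bracketed factor, being an explicit $m$-th power, contributes nothing after tensoring with $1/m$ in $k^\times \otimes \Q/\Z$. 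This yields the formula $\langle[a],[b]\rangle = b^{-s_m([a])} \otimes 1/m$; the first equality in the statement follows from the symmetry of the pairing proved in Section \ref{sect:pairing}, and the third from the tensor identity $a^{-s_m([b])} \otimes 1/m = q^{-s_m([a])s_m([b])} \otimes 1/m^2$ using $a^m = q^{s_m([a])}$. Well-definedness under change of representatives is guaranteed by $q^{s_m([\cdot])} \in (k^\times)^m$, which comes from the exact sequence \eqref{eq:ex-TateCv}. Boundary cases involving coincidences among $[a]$, $[b]$, or $O$ are handled by analogous rewrites.

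\textbf{Plan for Part (2).}
The inclusion $\{[\zeta] : \zeta \in \mu(k)\} \subseteq E_q(k)_\Tor^\is$ is immediate: for $\zeta \in \mu(k)$, we have $s_m([\zeta]) = 0$ for any $m$ with $\zeta^m=1$, so the first formula of part (1) gives $\langle[\zeta],[b]\rangle = 0$ for every torsion $[b]$.

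For the reverse inclusion, let $[a] \in E_q(k)_\Tor^\is$. Using the third formula of part (1) together with the integer lift $\tilde s_m(a) = v(a)m/n$, the pairing takes the $m$-independent form
\[
\langle[a],[b]\rangle = q \otimes \bigl(-v(a)v(b)/n^2\bigr) \in k^\times \otimes \Q/\Z.
\]
By \eqref{eq:ex-TateCvTor}, the target $[a] \in \mu(k)$ is equivalent to $n \mid v(a)$, so this is what must be shown. I would proceed in two steps. First, applying the valuation projection $k^\times \otimes \Q/\Z \to \Q/\Z$ induced by $v : k^\times \to \Z$ turns the pairing into $-v(a)v(b)/n \bmod \Z$; its vanishing for all torsion $[b]$ yields $n \mid v(a)v(b)$ for every $v(b)$ realized by a torsion point. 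A short check identifies the image of $v$ on $E_q(k)_\Tor$ with the cyclic subgroup $r_n \Z/n\Z \subseteq \Z/n\Z$, where $r_n$ is the order of $q$ in $k^\times/(k^\times)^n$; this forces $v(a)\cdot r_n \equiv 0 \pmod n$. Combined with $v(a)\bmod n \in r_n\Z/n\Z$, this already gives $n \mid v(a)$ when $\gcd(r_n, n/r_n) = 1$.

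Second, to handle the residual case $\gcd(r_n, n/r_n) > 1$, I would decompose $k^\times \cong \pi^\Z \times O_k^\times$ (with $q = \pi^n u_q$) and read off from the vanishing of $\langle[a],[b]\rangle$ the additional condition $u_q \otimes (v(a)v(b)/n^2) = 0$ in $O_k^\times \otimes \Q/\Z$. Testing with $[b]=[a]$ and other carefully chosen torsion points, and using the divisibility structure of $O_k^\times$ (which, for local $k$, is the product of $\mu(k)$ with a finitely-generated pro-$p$ group), I would extract the remaining arithmetic constraint on $v(a)$ that completes the argument.

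The main obstacle is the residual case in Part (2): the valuation-projected necessary condition does not directly imply $n \mid v(a)$, and one must combine it with a delicate analysis of the unit component $u_q \otimes -$ and the specific torsion points available for testing. The rest of the argument, and all of Part (1), is essentially routine once the theta-function machinery is set up.
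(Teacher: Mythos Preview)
Your treatment of Part~(1) is essentially the paper's argument: build a rational function with divisor $m([a]-O)$ from the theta series and read off the leading coefficients. Your choice $F_a(z)=z^{-\tilde s}(\theta(z/a)/\theta(z))^m$ differs from the paper's $f(u)=\theta(u/a)^m/(\theta(u)^{m-1}\theta(u/q^s))$ only by a constant, and your device of moving $[b]-O$ to a generic translate is a harmless variant of the paper's direct leading-coefficient computation at $[1]$, $[a]$, and generic $[b]$.

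Part~(2) has a genuine gap. Your valuation-projected condition $n\mid v(a)r_n$, even combined with $v(a)\in r_n\Z$, does not force $n\mid v(a)$ when $\gcd(r_n,n/r_n)>1$, as you note. Your proposed rescue---analyzing the unit component $u_q\otimes(v(a)v(b)/n^2)$ via the structure of $O_k^\times$---fails for two reasons. First, $k$ is only assumed complete discretely valued, not local, so $O_k^\times$ need not decompose as $\mu(k)$ times a pro-$p$ group. Second, even for local $k$, the self-pairing in the $q$-form yields only $n\mid v(a)^2$ from the $\pi$-component, and the $u_q$-component places no constraint when $u_q$ is divisible in $O_k^\times$ (e.g.\ $u_q=1$). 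So the information you hope to extract is not there.

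The paper avoids this entirely. The quotient $T:=E_q(k)_\Tor/[\mu(k)]$ embeds in $\Z/n\Z$ via $v$, hence is cyclic of some order $m_T$. Choose $[a]\in E_q(k)_\Tor$ whose class generates $T$; then $s_m([a])\equiv cm/m_T\pmod m$ with $\gcd(c,m_T)=1$, and Part~(1) gives
\[
\langle[a],[a]\rangle=a\otimes\frac{-c}{m_T}\in k^\times\otimes\Q/\Z.
\]
The crucial observation is that this element has order exactly $m_T$: if $a$ lay in $\mu(k)\cdot(k^\times)^e$ for some divisor $e>1$ of $m_T$, say $a=\zeta\beta^e$, then $[\beta]$ would be torsion with $[\beta]^e$ generating $T$, impossible in a cyclic group of order $m_T$ since $\gcd(e,m_T)=e>1$. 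Hence $[a]^\nu\in E_q(k)_\Tor^\is$ iff $m_T\mid\nu$ iff $[a]^\nu\in[\mu(k)]$, and since $E_q(k)_\Tor=\langle[a]\rangle\cdot[\mu(k)]$ this gives $E_q(k)_\Tor^\is=[\mu(k)]$. The point you are missing is to use the \emph{first} form $a^{-s_m([b])}\otimes(1/m)$ of the pairing rather than the $q$-form, and to exploit the indivisibility of a generator of a cyclic group rather than chase valuations.
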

\begin{proof}
(1) Put $s:=\wt{s}_m(a), s':=\wt{s}_m(b) \in \Z$.
The last two equalities easily follow from $a^m=q^{s}, \ b^m=q^{s'}$.
We show the first equality.
It is obvious if $[a]=[1]$, 
hence we assume $[a] \not= [1]$ in what follows.
We use the $p$-adic theta function 
\[
\theta(u) := (1-u) \prod_{i \ge 1} \frac{(1-q^iu)(1-q^iu^{-1})}{(1-q^i)}^2
\]
which enjoys the fundamental equality (see \cite[V, Proposition 3.2]{sil}):
\begin{equation}\label{eq:theta-q}
-u \theta(qu)=\theta(u).
\end{equation}
It follows from this formula (and $a^m=q^s$) that 
\[
f(u) := 
\frac{\theta(u/a)^m}
{\theta(u)^{m-1} \theta(u/q^s)}
\]
satisfies $f(qu)=f(u)$.
Hence we have $f(u) \in k(E_q)^\times$ and $\div(f)=m([a] - [1])$.
We claim
\begin{align}
\label{eq:llc-theta}
\lc_{[b]}(f, m)
=q^{s(s-1)/2} b^{-s} \otimes \frac{1}{m}
\end{align}
for any $b \in k^\times$.
Indeed, 
when $[b]=[1]$
this is seen as
\begin{align*}
\lc_{[1]}(f, m)
&= \left( \frac{f(u)}{\theta(u)^m} \right)|_{u=1} \otimes \frac{1}{m}
= \theta(1/a)^m \cdot \left( \frac{\theta(u)}{\theta(u/q^s)} \right)|_{u=1} \otimes \frac{1}{m}
\\
&= \theta(1/a)^m \cdot (-1)^s q^{s(s-1)/2}\otimes \frac{1}{m}
=q^{s(s-1)/2} \otimes \frac{1}{m},
\end{align*}
where we used \eqref{eq:theta-q} again.
When $[b]=[a]$ we proceed similarly:
\begin{align*}
\lc_{[a]}(f, m)
&= \left( \frac{f(u)}{\theta(u/a)^m} \right)|_{u=a} \otimes \frac{1}{m}
= \frac{1}{\theta(a)^{m-1} \theta(a/q^s)}  \otimes \frac{1}{m}
\\
&= \theta(a)^{-m} \cdot (-1)^s q^{s(s-1)/2} a^{-s}\otimes \frac{1}{m}
=q^{s(s-1)/2} a^{-s} \otimes \frac{1}{m}.
\end{align*}
Finally, it is easier when $[b] \not= [1], [a]$:
\begin{align*}
\lc_{[b]}(f, m)
&= f(b) 
\otimes \frac{1}{m}
= \left( \frac{\theta(b/a)}{\theta(b)} \right)^m \frac{\theta(b)}{\theta(b/q^s)}
\otimes \frac{1}{m}
=q^{s(s-1)/2} b^{-s} \otimes \frac{1}{m}.
\end{align*}
We have shown \eqref{eq:llc-theta},
from which (1) readily follows.

(2)
Since $s_m([\zeta])=0$
for any $m>0$ and $\zeta \in \mu_m(k)$,
the right hand side $[\mu(k)]$ of (2)
is contained in $E_q(k)_\Tor^\is$ by (1).
Hence $\langle \cdot, \cdot \rangle$ factors through
a symmetric biadditive pairing on $T:=E_q(k)_\Tor^\is/[\mu(k)]$.
From \eqref{eq:ex-TateCvTor}
we find that $T$ is a finite cyclic group 
and $m_T:=|T|$ is divisible by $n$.
Take $a \in k^\times$ such that
the class of $[a] \in E_q(k)_\Tor$ generates $T$,
and let $m$ be the order of $[a]$ in $E_q(k)_\Tor$
(which can be strictly larger than $m_T$).
Note that we have
$m/m_T=\min\{ \nu>0 \mid q^\nu \in (k^\times)^m \}$
and $a^m=\zeta q^{cm/m_T}$ for some $\zeta \in \mu_m(k)$ and $c \in \Z$
with $c$ coprime to $m_T$.
Now (1) shows that $\langle [a], [a] \rangle = a \otimes (-c^2/m_T) \in k^\times \otimes \Q/\Z$,
which has order  precisely $m_T$  as seen from 
the exact sequence
\[ \mu(k) \overset{m}{\to} \mu(k) \to k^\times/(k^\times)^m \to k^\times \otimes \Q/\Z.
\]
Thus $[a]^\nu$ belongs to $E_q(k)_\Tor^\is$ if and only if 
$\nu$ is a multiple of $m_T$.
We are done.
\end{proof}


\begin{corollary}\label{cor:kodairatype}
Let $E$ be an elliptic curve over a number field $k$,
and $P \in E(k)_\Tor^\is$ an element of the intrinsic subgroup of order $m>0$.
Let $v$ be a finite place of $k$
such that the completion $k_v$ of $k$ at $v$ satisfies $\mu_m(k_v)= \{ 1 \}$.
If $E$ has split multiplicative reduction of Kodaira type $I_n$ at $v$,
then $n$ is divisible by $m^2$.
\end{corollary}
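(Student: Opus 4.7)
The plan is to reduce the problem to a local computation on the Tate curve.

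First, I would pass to the completion $k_v$. By the base-change functoriality for the pairing noted in the Remark following the projection formula, $P \in E(k)_\Tor^\is$ implies in particular that $\langle P, P \rangle$ vanishes in $k^\times \otimes \Q/\Z$, and hence also in $k_v^\times \otimes \Q/\Z$. Since $E$ has split multiplicative reduction of Kodaira type $I_n$ at $v$, Tate's uniformization theorem gives an isomorphism of $k_v$-groups $E(k_v) \cong k_v^\times/q^\Z$ for some $q \in k_v^\times$ with $v(q)=n$. Transporting $P$ along this isomorphism, we write $P=[a]$ with $a \in k_v^\times$ representing a torsion class of order $m$ in $E_q(k_v)$.

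Next, I would extract information about the integer $s:=s_m(P) \in \Z/m\Z$ from the exact sequence \eqref{eq:ex-TateCv}. Because the hypothesis $\mu_m(k_v)=\{1\}$ forces the kernel of $s_m : E_q(k_v)[m] \to \Z/m\Z$ to be trivial, the restriction of $s_m$ to $E_q(k_v)[m]$ is injective. Since $P$ has exact order $m$, its image $s$ must also have exact order $m$ in $\Z/m\Z$, which means $s$ generates $\Z/m\Z$ and hence $\gcd(s,m)=1$.

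Finally, I would apply Proposition \ref{prop:TateCv}(1) with $b=a$ to obtain the explicit formula
\[
\langle P,P\rangle = q^{-s^2}\otimes\frac{1}{m^2} \in k_v^\times \otimes \Q/\Z.
\]
The vanishing of this element is equivalent to $q^{-s^2} \in (k_v^\times)^{m^2}$. Taking the normalized valuation $v$ of both sides, this forces $s^2 n \equiv 0 \pmod{m^2}$. Since $\gcd(s,m)=1$ implies $\gcd(s^2,m^2)=1$, we conclude $m^2 \mid n$, as required.

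The argument is quite short once Proposition \ref{prop:TateCv} is in hand, so there is no serious obstacle; the only subtle point worth stressing is the use of the hypothesis $\mu_m(k_v)=\{1\}$ to guarantee that $s_m(P)$ is a unit modulo $m$. Without that unit property one could only conclude a weaker divisibility of the form $m^2 \mid s^2 n$.
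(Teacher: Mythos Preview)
Your proof is correct and follows essentially the same approach as the paper's: pass to the Tate model over $k_v$, use $\mu_m(k_v)=\{1\}$ together with the exact sequence \eqref{eq:ex-TateCv} to see that $s=s_m(P)$ is a unit in $\Z/m\Z$, then apply Proposition \ref{prop:TateCv}(1) and read off the divisibility by applying the valuation. One very minor point: the vanishing of $q^{-s^2}\otimes(1/m^2)$ in $k_v^\times\otimes\Q/\Z$ is equivalent to $q^{-s^2}\in\mu(k_v)\cdot(k_v^\times)^{m^2}$ rather than $(k_v^\times)^{m^2}$ itself, but since roots of unity have valuation zero this does not affect your valuation step (the paper sidesteps this by applying $v\otimes\id_{\Q/\Z}$ directly).
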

\begin{proof}
The assumption implies that
the base change of $E$ to $k_v$ is 
isomorphic to the Tate curve $E_q=\G_m/q^\Z$
for some $q \in k_v$ such that $v(q)=n$
(\cite[Chapter V, Theorem 5.3]{sil})
Let us write the image of $P$ in $E_q(k_v)$ as $[a]$ with $a \in k_v$.
Since $\mu_m(k_v)= \{ 1 \}$, 
we find from  \eqref{eq:ex-TateCv} that
$E_q(k_v)[m]$ is a cyclic group of order $m$ generated by $[a]$.
In particular, $s:= s_m([a])$ is invertible in $\Z/m\Z$.
We have $\langle [a], [a] \rangle = 0$ since $P \in E(k)_\Tor^\is$.
On the other hand,
Proposition \ref{prop:TateCv} (1) shows
$\langle [a], [a] \rangle = q^{-s^2} \otimes (1/m^2)$.
By looking at its image under the map
$v \otimes \id_{\Q/\Z} : k_v^\times \otimes \Q/\Z \to \Q/\Z$,
we conclude that $-s^2 n/m^2 = 0$ holds in $\Q/\Z$,
that is, $m^2$ divides $n$.
\end{proof}

\begin{example}
Let $E$ an elliptic curve over $\Q$
such that $E(\Q)_\Tor^\is$ contains an element of order $m>0$.
Let $p$ be an odd prime such that $p-1$ is prime to $m$.
If
$E$ has split multiplicative reduction of Kodaira type $I_n$ at $p$,
then $n$ is divisible by $m^2$.
\end{example}


\bibliographystyle{plain}


%









\end{document}